\documentclass[11pt,reqno]{amsproc}
\linespread{1.1}
\allowdisplaybreaks
\numberwithin{equation}{section}

\usepackage{empheq}
\usepackage{amssymb}
\usepackage{appendix} 

\usepackage{enumerate}
\usepackage{fullpage}
\usepackage{textcomp}
\usepackage{lettrine}
\usepackage{float} 
\usepackage{graphicx}
\usepackage{subfigure}
\usepackage{morefloats}

\usepackage[font=small,labelfont=bf,
   justification=justified,
   format=plain]{caption} 

\usepackage[debug=false, colorlinks=true, pdfstartview=FitV,
linkcolor=blue, citecolor=blue, urlcolor=blue]{hyperref}
\usepackage[semicolon,square,authoryear,sort]{natbib}

\usepackage[doipre={DOI:~}]{uri}

\usepackage{color}
\usepackage{colortbl}
\usepackage[most]{tcolorbox}

\newtcbox{\mymath}[1][]{%
    nobeforeafter, math upper, tcbox raise base,
    enhanced, colframe=blue!30!black,
    colback=blue!30, boxrule=1pt,
    #1}

\newlength{\drop}
\definecolor{amethyst}{rgb}{0.6, 0.4, 0.8}
\definecolor{burgundy}{rgb}{0.5, 0.0, 0.13}

\usepackage{longtable} 
\usepackage{multirow} 
\usepackage{rotating} 
\usepackage{bigstrut} 
\usepackage{hhline} 

\usepackage{amsthm}


\newtheoremstyle{remboldstyle}
  {}{}{}{}{\bfseries}{.}{.5em}{{\thmname{#1 }}{\thmnumber{#2}}{\thmnote{ (#3)}}}
\theoremstyle{remboldstyle}

\newtheorem{theorem}{Theorem}
\newtheorem{lemma}[theorem]{Lemma}

\newtheorem{remark}{Remark}

\title{\textbf{Flow Through Porous Media: A Hopf-Cole Transformation Approach for Modeling Pressure-Dependent Viscosity}}

\author{\textbf{Venkat S.~Maduri} and \textbf{Kalyana B.~Nakshatrala} \\
  {\small Department of Civil and Environmental Engineering \\
  University of Houston, Houston, Texas 77204, USA.}\\
  {\small \textbf{Correspondence to}: knakshatrala@uh.edu, 
  +1-713-743-4418}}

\keywords{Hopf-Cole transformation;
pressure-dependent viscosity;
Barus model; 
Darcy equations; 
flow through porous media;
nonlinearity}

\begin{document}

\begin{titlepage}
  \drop=0.1\textheight
  \centering
  \vspace*{\baselineskip}
  \rule{\textwidth}{1.6pt}\vspace*{-\baselineskip}\vspace*{2pt}
  \rule{\textwidth}{0.4pt}\\[\baselineskip]
       {\Large \textbf{\color{burgundy}
       Flow Through Porous Media: A Hopf-Cole Transformation 
       \\[0.3\baselineskip] Approach for Modeling 
       Pressure-Dependent Viscosity}}\\[0.3\baselineskip]
       \rule{\textwidth}{0.4pt}\vspace*{-\baselineskip}\vspace{3.2pt}
       \rule{\textwidth}{1.6pt}\\[\baselineskip]
       \scshape
       An e-print of this paper is available on arXiv. \par
       \vspace*{1\baselineskip}
       Authored by \\[0.5\baselineskip]

       {\Large V.~S.~Maduri\par}
       {\itshape Graduate Student, Department of Civil \& Environmental Engineering \\
       University of Houston, Houston, Texas 77204.}\\[0.5\baselineskip]

  {\Large K.~B.~Nakshatrala\par}
  {\itshape Department of Civil \& Environmental Engineering \\
  University of Houston, Houston, Texas 77204. \\
  \textbf{phone:} +1-713-743-4418, \textbf{e-mail:} knakshatrala@uh.edu \\
  \textbf{website:} http://www.cive.uh.edu/faculty/nakshatrala}\\[0.25\baselineskip]

  \begin{figure*}[ht]
    \centering
    \includegraphics[width=0.6\linewidth]{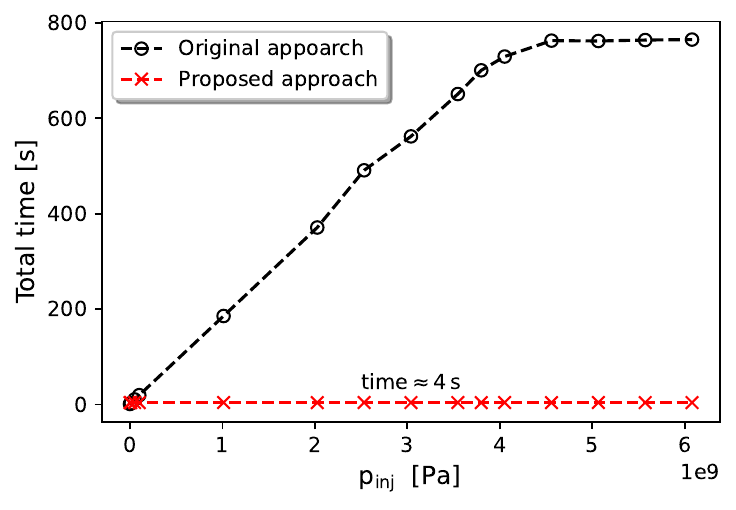}
    \captionsetup{labelformat=empty}
    \caption{This figure compares the time-to-solution between the original approach—solving the Barus model directly—and the proposed approach, which leverages the Hopf-Cole transformation. The proposed method significantly reduces the simulation time, requiring only a fraction of that used by the original approach. In addition to its computational advantages, the proposed method also offers several theoretical benefits, as illustrated in this paper.}
  \end{figure*}
  \vfill
  {\scshape 2025} \\
  {\small Computational \& Applied Mechanics Laboratory} \par
\end{titlepage}

\begin{abstract} 
    Most organic liquids exhibit a pressure-dependent viscosity, making it crucial to consider this behavior in applications where pressures significantly exceed ambient conditions (e.g., geological carbon sequestration). Mathematical models describing flow through porous media while accounting for viscosity-pressure dependence are nonlinear (e.g., the Barus model). This nonlinearity complicates mathematical analysis and makes numerical solutions more time-intensive and prone to convergence issues. In this paper, we demonstrate that the Hopf-Cole transformation, originally developed for Burgers' equation, can recast the governing equations---describing flow through porous media with pressure-dependent viscosity---into a linear form. The transformed equations, resembling Darcy's equations in the transformed variables, enable (a) systematic mathematical analysis to establish uniqueness and maximum principles, (b) the derivation of a mechanics-based principle, and (c) the development of efficient numerical solutions using solvers optimized for Darcy equations. Notably, many properties of the linear Darcy equations naturally extend to nonlinear models that depend on pressure. For example, solutions to these nonlinear models adhere to a reciprocal relation analogous to that observed in Darcy's equations.
\end{abstract}

\maketitle

    \setcounter{figure}{0}   

\section{INTRODUCTION AND MOTIVATION}
\label{Sec:S1_Hopf_Cole_Intro}

\lettrine[findent=2pt]{\fbox{\textbf{F}}}{l}uid flow through porous media manifests in a wide variety of engineering and medical applications critical to the economy, energy, environment, and health. These endeavors range from hydrocarbon extraction, groundwater management, water filtration, and $\mathrm{CO}_2$ sequestration, as well as tissue perfusion and the design of surgical masks, among others. However, given the breadth and intricate nature of these applications, a nuanced understanding of the underlying mechanisms is essential, with modeling playing a critical role.

One issue pertinent to this paper is that applications such as enhanced oil recovery and geological carbon sequestration often operate under high-pressure conditions---sometimes exceeding ambient pressure by two orders of magnitude. Additionally, the viscosity of most organic liquids (e.g., crude oil) exhibits a strong dependence on fluid pressure. Notably, \textsc{Carl Barus} demonstrated that this dependence follows an exponential relationship \citep{barus1893isothermals}. Consequently, accurately accounting for this pronounced functional dependence is imperative for high-pressure applications.

Since its introduction, the Darcy model---named after \textsc{Henry Darcy}---has served as the cornerstone for modeling fluid flow through porous media by relating fluid flux to the pressure gradient \citep{darcy1856public,simmons2008henry}. Due to its importance and mathematical elegance, Darcy equations (comprising of the balance of linear momentum, balance of mass, fluid's incompressibility, and the Darcy model---a constitutive relation) have garnered significant attention from mechanicians, applied mathematicians, and computational scientists. 
\begin{itemize}
    \item Several results based on mechanics that the solutions of the Darcy equations satisfy are available; for example, the minimum dissipation theorem and the minimum mechanical power theorem \citep{shabouei2016mechanics}. 
    \item Noting that Darcy equations, when written in terms of pressure, give rise to second-order diffusion equations, numerous analysis studies have proven powerful theorems (such as existence, uniqueness, maximum principles, and comparison principles) on the nature of solutions \citep{evanspartial,gilbarg1977elliptic}. 
    \item Robust numerical solvers are available for the Darcy equations; for example, mixed formulations based on Raviart-Thomas spaces \citep{raviart1977primal,ern2004theory}, and stabilized mixed finite element formulations \citep{masud2002stabilized,nakshatrala2006stabilized}. 
\end{itemize}

However, the classical Darcy model assumes constant fluid properties. This assumption limits its applicability in real-world scenarios where fluid viscosity exhibits significant pressure dependence, such as in deep geothermal reservoirs, hydraulic fracturing, and enhanced oil recovery \citep{rajagopal2007hierarchy}. Consequently, various models incorporating pressure-dependent viscosity have been developed; for example, \citet{rajagopal2007hierarchy} introduced a hierarchy of mathematical models for studying flow through porous media, allowing fluid properties to depend on the solution fields (i.e., pressure and velocity). One popular model is the Barus model, which integrates the exponential relationship into the study of flow through porous media \citep{nakshatrala2011numerical}. This modification to the Darcy model introduces non-linearity into the system, making both analytical and numerical solutions more challenging but ultimately more representative of actual fluid behavior in the aforementioned applications. Expanding on this:
\begin{itemize}
    \item Conventional analytical approaches---such as Green's functions, Laplace transforms, and Fourier transforms---are not suitable for tackling these kinds of nonlinear problems. Thus, advanced alternative methods are warranted.
    \item For numerical solutions, iterative strategies are required to manage nonlinearity and ensure convergence. However, solving these nonlinear equations remains computationally onerous. Nonetheless, notable works have rigorously addressed the complexities of pressure-dependent viscosity through systematic numerical investigations and advanced stabilized formulations \citep{nakshatrala2011numerical,mapakshi2018scalable}.
    \item As mentioned earlier, the solution fields of the classical Darcy equations exhibit important mathematical properties and mechanics-based principles. However, equivalent properties for nonlinear models have yet to be fully established. In such cases, sophisticated mathematical techniques (e.g., nonlinear functional analysis) must be employed to achieve comparable results.
\end{itemize} 

The above discussion raises two central questions: 
\begin{enumerate} 
    \item[{[Q1]}] Is it possible to obtain theoretical results without employing new mathematical tools or to get numerical results without developing new formulations? 
    \item[{[Q2]}] Can we find a transformation that simplifies the governing equations arising from pressure-dependent models for fluid flow through porous media? 
\end{enumerate}
The main aim of this paper is to show that the answers to both of these questions are affirmative. Thus, the task at hand is to identify a suitable transformation. 

A noteworthy historical precedent for transform methods applicable for nonlinear problems can be traced back to \textsc{Gustav Kirchhoff}’s pioneering work on heat conduction, where he introduced a method to handle temperature-dependent material properties \citep{Kirchhoff1857,kirchhoff1894theorie}. The transformation converted a quasilinear diffusion equation into a linear diffusion equation when lower-order derivative terms are absent from the partial differential equation (i.e., there is no advection or reaction term). If such terms are present, the transformation yields a semi-linear diffusion equation instead. This transformation is now popularly referred to as the Kirchhoff transformation, and its presentation can be found in standard texts on diffusion equations \citep{crank1979mathematics}, heat transfer \citep{ozicsik1993heat} and applied mathematics \citep{kevorkian1990partial}. 

Nearly a century after Kirchhoff's work, \textsc{Eberhard Hopf} introduced a transformative method to study Burgers' equation \citep{Hopf1950}. The following year, \textsc{Julian Cole} independently discovered the same key idea while investigating aerodynamic flow problems \citep{Cole1951}. Their shared insight, now collectively referred to as the Hopf-Cole transformation, converts the nonlinear Burgers' equation into the linear heat equation, thus providing a powerful analytical framework for understanding and solving a wide array of nonlinear diffusion and shock phenomena. Interestingly, \textsc{Peter Vadasz} has recently demonstrated that the Kirchhoff transformation arises as a special case of the Hopf-Cole transformation, thereby unveiling a deeper connection between these two transform techniques \citep{Vadasz2010}.

Influenced by such transform methods, our approach utilizes the Hopf-Cole transformation to develop a framework for studying fluid flow through porous media, where the fluid's viscosity depends on pressure. The transformation recasts the governing equations under the Barus model as classical Darcy equations---expressed in terms of the transformed variable. The proposed framework has both theoretical and practical significance. On the theoretical front, the simplicity of the equations obtained after applying the transformation allows us to derive various mathematical and mechanics-based properties satisfied by the solution fields of the original nonlinear model. On the practical front, the framework provides an efficient method for obtaining numerical solutions using linear solvers, thereby circumventing convergence issues with nonlinear iterative approaches.  

The organization of the rest of this paper is structured as follows. We present the governing equations of a nonlinear model describing fluid flow through porous media  (\S\ref{Sec:S2_Hopf_Cole_GE}). Next, we introduce a framework based on the Hopf-Cole transformation corresponding to the nonlinear mathematical model (\S\ref{Sec:S3_Hopf_Cole_Transformation}). We then establish key mathematical properties, including the maximum principle, and outline theoretical findings achieved by leveraging the simplicity of the transformed equations (\S\ref{Sec:S4_Hopf_Cole_Mathematical}). This is followed by mechanics-based principles, akin to those previously known for the classical Darcy equations (\S\ref{Sec:S5_Hopf_Cole_Mechanics}). Subsequently, we highlight the computational advantages of using the proposed framework (\S\ref{Sec:S6_Hopf_Cole_NR}). Finally, we conclude by summarizing the utility of the Hopf-Cole transformation in studying pressure-dependent flow through porous media models (\S\ref{Sec:S7_Hopf_Cole_Closure}).                

\section{GOVERNING EQUATIONS: PRESSURE-DEPENDENT VISCOSITY}
\label{Sec:S2_Hopf_Cole_GE}

Consider the flow of an incompressible fluid through a porous domain. We denote the domain by $\Omega \subset \mathbb{R}^{nd}$, where ``$nd$" represents the number of spatial dimensions. $\partial \Omega := \overline{\Omega} \setminus \Omega$ depicts the boundary, with $\overline{\Omega}$ symbolizing the set closure of $\Omega$, and $\mathbf{x} \in \overline{\Omega}$ refers to a spatial point. For differential operators, $\mathrm{grad}[\cdot]$ and $\mathrm{div}[\cdot]$ represent the spatial gradient and divergence operators, respectively. On the boundary, $\widehat{\mathbf{n}}(\mathbf{x})$ is the unit outward normal vector. 

In our description, we do not resolve the flow at the pore scale. Instead, we model the phenomenon at the macroscale, also known as the Darcy scale or plug flow scale \citep{bear2013dynamics}. At a spatial point, we indicate the Darcy velocity vector field by $\mathbf{v}(\mathbf{x})$ and the pressure field by $p(\mathbf{x})$. To describe homogenized flow, we use the permeability field, symbolized by $\mathbf{K}(\mathbf{x})$. We assume that this field is generally anisotropic; hence, it is a tensor field. Moreover, from a physical standpoint, the permeability field must be symmetric and positive definite and therefore invertible. However, from a mathematical perspective, we impose stronger restrictions on the permeability field: it must be uniformly elliptic (i.e., uniformly bounded below) and bounded above. Specifically, there exist two constants $k_1$ and $k_2$ with $0\, <\, k_{1} \, \leq \, k_2$ such that 
\begin{align}
    k_{1} \,\boldsymbol{\zeta}(\mathbf{x}) \bullet \boldsymbol{\zeta}(\mathbf{x}) \leq \boldsymbol{\zeta}(\mathbf{x}) \bullet \mathbf{K}(\mathbf{x}) \, \boldsymbol{\zeta}(\mathbf{x}) \leq k_{2} \, \boldsymbol{\zeta}(\mathbf{x}) \bullet \boldsymbol{\zeta}(\mathbf{x}) \quad \forall \mathbf{x} \in \Omega, \forall \boldsymbol{\zeta}(\mathbf{x}) \in \mathbb{R}^{nd} \setminus \{\boldsymbol{0}\}
\end{align} 

We designate the density of the fluid by $\rho$ and the viscosity by $\mu$. As emphasized earlier, viscosity is not assumed to be constant but rather depends on the fluid pressure, a characteristic of organic liquids \citep{bridgman1926effect}. We use the Barus model to describe the dependence of the viscosity on the pressure \citep{barus1893isothermals}. Mathematically, the Barus model takes the following form: 
\begin{align}
    \label{Eqn:Hopf_Cole_Barus_model}
    \mu = \widehat{\mu}\big(p(\mathbf{x})\big) 
    := \mu_{0} \, \exp\left[\beta \, \left(\frac{p(\mathbf{x})}{p_0} - 1\right)\right]
\end{align}
where $\beta \geq 0$ is a dimensionless constant, $p_0$ represents the reference pressure (often taken as the atmospheric pressure, $p_{\mathrm{atm}}$), and $\mu_0$ denotes the reference viscosity, which is the viscosity of the fluid at the reference pressure. A comment on the notation: $\widehat{\mu}\big(p(\mathbf{x})\big)$ is a functional form representing viscosity in terms of $p(\mathbf{x})$. Different functional forms may arise by altering the argument while still representing the same viscosity (cf. Eq.~\eqref{Eqn:Hopf_Cole_Barus_model} and Eq.~\eqref{Eqn:Hopf_Cole_Modified_viscosity}).

With respect to boundary conditions, we divide the boundary into two complementary parts: $\Gamma^{p}$ and $\Gamma^{v}$. $\Gamma^{p}$ represents the part of the boundary on which pressure is prescribed, while $\Gamma^{v}$ denotes the part of the boundary on which the normal component of the velocity is prescribed. For the problem to be mathematically well-posed, we require that
\begin{align}
    \Gamma^{p} \cup \Gamma^{v} 
    = \partial \Omega \quad \mathrm{and} 
    \quad \Gamma^{p} \cap \Gamma^{v} 
    = \emptyset
\end{align}

The governing equations, accounting for the pressure dependence of the fluid's viscosity, are expressed as follows:
\begin{subequations}
    \label{Eqn:Hopf_Cole_Original_BVP}
    \begin{alignat}{2}
        \label{Eqn:Hopf_Cole_BoLM}
        &\widehat{\mu}\big(p(\mathbf{x})\big) 
        \, \mathbf{K}^{-1}(\mathbf{x})
        \, \mathbf{v}(\mathbf{x}) 
        + \mathrm{grad}\big[p(\mathbf{x})\big] 
        = \rho \, \mathbf{b}(\mathbf{x}) 
        &&\quad \mathrm{in} \; \Omega \\
        \label{Eqn:Hopf_Cole_BoM}
        &\mathrm{div}\big[\mathbf{v}(\mathbf{x})\big] 
        = 0  
        &&\quad \mathrm{in} \; \Omega \\
        \label{Eqn:Hopf_Cole_pBC}
        &p(\mathbf{x}) = p_{\mathrm{p}}(\mathbf{x}) 
        &&\quad \mathrm{on} \; \Gamma^{p} \\
        \label{Eqn:Hopf_Cole_vBC}
        &\mathbf{v}(\mathbf{x}) \bullet 
        \widehat{\mathbf{n}}(\mathbf{x})  
        = v_{n}(\mathbf{x}) 
        &&\quad \mathrm{on} \; \Gamma^{v} 
    \end{alignat}
\end{subequations}
where $\mathbf{b}(\mathbf{x})$ is the specific body force field, $p_{\mathrm{p}}(\mathbf{x})$ the prescribed pressure on the boundary, and $v_n(\mathbf{x})$ the prescribed normal component of the velocity on the boundary

Equation \eqref{Eqn:Hopf_Cole_BoLM} represents the balance of linear momentum, Eq.~\eqref{Eqn:Hopf_Cole_BoM} accounts for the mass conservation considering the incompressibility of the fluid, Eq.~\eqref{Eqn:Hopf_Cole_pBC} describes the pressure boundary condition, and Eq.~\eqref{Eqn:Hopf_Cole_vBC} specifies the velocity boundary condition, with the normal component of the velocity prescribed.

The resulting mathematical model is nonlinear due to Eq.~\eqref{Eqn:Hopf_Cole_BoLM} in conjunction with Eq.~\eqref{Eqn:Hopf_Cole_Barus_model}. Specifically, Eqs.~\eqref{Eqn:Hopf_Cole_BoLM} and \eqref{Eqn:Hopf_Cole_BoM} can be combined to eliminate the velocity, resulting in a quasi-linear elliptic partial differential equation in terms of the pressure \citep{gilbarg1977elliptic}. As presented in the next section, we use the Hopf-Cole transformation to recast the mathematical model into a linear elliptic partial differential equation, expressed in terms of a transformed variable.
 
Before proceeding, a comment on mathematical consistency is warranted. If $\Gamma^{v} = \partial \Omega$ (i.e., pressure is not specified on any part of the boundary), then the prescribed velocity must satisfy the following compatibility equation:
\begin{align}
    \label{Eqn:Hopf_Cole_Compatibility_equation}
    \int_{\Gamma^{v} = \partial \Omega} 
    v_n(\mathbf{x}) \, \mathrm{d} \Gamma = 0
\end{align}
The above condition is a direct consequence of the divergence theorem and Eq.~\eqref{Eqn:Hopf_Cole_BoM}. 

\section{HOPF-COLE TRANSFORMATION}
\label{Sec:S3_Hopf_Cole_Transformation}
To avoid nonlinear terms after applying the Hopf-Cole transformation, some restrictions on the nature of the body force are necessary. Specifically, we assume that the body force is a conservative vector field---a reasonable assumption; for example, the popular body force due to gravity is a conservative field, too. Accordingly, we can write: 
\begin{align}
    \label{Eqn:Hopf_Cole_rhob_conservative}
    \rho \, \mathbf{b}(\mathbf{x}) 
    = -\mathrm{grad}\big[\xi(\mathbf{x})\big]
\end{align}
for some scalar field $\xi(\mathbf{x})$. For convenience, we introduce a modified pressure variable: 
\begin{align}
    \label{Eqn:Hopf_Cole_Ptilde_definition}
    \widetilde{p}(\mathbf{x}) := 
    p(\mathbf{x}) + \xi(\mathbf{x}) 
\end{align}
According to formal definitions, $\widetilde{p}(\mathbf{x})$ represents the pressure relative to the datum specified by the negative of $\xi(\mathbf{x})$.

Using the above definition, we rewrite the Barus model as follows: 
\begin{align}
   \label{Eqn:Hopf_Cole_mu_tilde_functional_form}
    \mu = \widetilde{\mu}\big(\mathbf{x},\widetilde{p}(\mathbf{x})\big) 
    := \mu_0 \, \exp\left[\beta \, \left(\frac{\widetilde{p}(\mathbf{x}) - \xi(\mathbf{x})}{p_0} - 1\right)\right]
\end{align}
indicating the functional dependence of $\widetilde{\mu}(\cdot)$, which depends on $\mathbf{x}$ both explicitly (due to the prescribed scalar field $\xi(\mathbf{x})$) and implicitly (due to the solution field $\widetilde{p}(\mathbf{x})$).
The exponential form of the Barus model allows us to rewrite the preceding equation using the following multiplicative decomposition: 
\begin{align}
    \mu = 
    \mu_0 \exp\left[-\frac{\beta \, 
    \xi(\mathbf{x})}{p_0}\right] 
    \exp\left[\beta \, 
    \left(\frac{\widetilde{p}(\mathbf{x})}{p_0} - 1\right) \right]
\end{align}
For future reference, we introduce the following notation: 
\begin{subequations}
    \begin{align}
        \label{Eqn:Hopf_Cole_mu0_tilde}
        \widetilde{\mu}_{0}(\mathbf{x}) &:= 
        \mu_0 \exp\left[-\frac{\beta \, \xi(\mathbf{x})}{p_0}\right] \\ 
        \label{Eqn:Hopf_Cole_g_of_ptilde}
        g\big(\widetilde{p}(\mathbf{x})\big) &:= \exp\left[\beta \, \left(\frac{\widetilde{p}(\mathbf{x})}{p_0} - 1\right) \right]
    \end{align}
\end{subequations}
Then, the viscosity takes the following compact form: 
\begin{align}
    \label{Eqn:Hopf_Cole_Modified_viscosity}
    \mu = \widetilde{\mu}\big(\mathbf{x},\widetilde{p}(\mathbf{x})\big)
    = \widetilde{\mu}_0(\mathbf{x}) \, g\big(\widetilde{p}(\mathbf{x})\big)
\end{align}
The indicated functional dependence of $\widetilde{\mu}$, $\widetilde{\mu}_0$, and $g$ will be crucial in recasting the governing equations.

Equations~\eqref{Eqn:Hopf_Cole_Ptilde_definition} and \eqref{Eqn:Hopf_Cole_Modified_viscosity} enable us to rewrite the balance of linear momentum \eqref{Eqn:Hopf_Cole_BoLM}, originally a non-homogeneous partial differential equation, as a homogeneous one. The boundary value problem written in terms of $\widetilde{p}(\mathbf{x})$ and $\mathbf{v}(\mathbf{x})$ takes the following form: 
\begin{subequations}
    \label{Eqn:Hopf_Cole_BVP_Conservative}
    \begin{alignat}{2}
        \label{Eqn:Hopf_Cole_BoLM_Conservative}
        &\widetilde{\mu}_{0}(\mathbf{x}) \, g\big(\widetilde{p}(\mathbf{x})\big) 
        \, \mathbf{K}^{-1}(\mathbf{x})
        \, \mathbf{v}(\mathbf{x}) 
        + \mathrm{grad}\big[\widetilde{p}(\mathbf{x})\big] 
        = \mathbf{0} 
        &&\quad \mathrm{in} \; \Omega \\
        \label{Eqn:Hopf_Cole_BoM_Conservative}
        &\mathrm{div}\big[\mathbf{v}(\mathbf{x})\big] 
        = 0  
        &&\quad \mathrm{in} \; \Omega \\
        \label{Eqn:Hopf_Cole_pBC_Conservative}
        &\widetilde{p}(\mathbf{x}) = \widetilde{p}_{\mathrm{p}}(\mathbf{x}) 
        &&\quad \mathrm{on} \; \Gamma^{p} \\
        \label{Eqn:Hopf_Cole_vBC_Conservative}
        &\mathbf{v}(\mathbf{x}) \bullet 
        \widehat{\mathbf{n}}(\mathbf{x})  
        = v_{n}(\mathbf{x}) 
        &&\quad \mathrm{on} \; \Gamma^{v} 
    \end{alignat}
\end{subequations}
where, in view of Eqs.~\eqref{Eqn:Hopf_Cole_pBC} and \eqref{Eqn:Hopf_Cole_Ptilde_definition}, the prescribed modified pressure takes the form:
\begin{align}
    \label{Eqn:Hopf_Cole_Ptilde_p}
    \widetilde{p}_{\mathrm{p}}(\mathbf{x}) 
    := p_{\mathrm{p}}(\mathbf{x}) + \xi(\mathbf{x}) 
\end{align}

In the remainder of this paper, results will be obtained for $\widetilde{p}(\mathbf{x})$ by considering the boundary value problem given by Eqs.~\eqref{Eqn:Hopf_Cole_BoLM_Conservative}--\eqref{Eqn:Hopf_Cole_vBC_Conservative}, rather than addressing the original boundary value problem given by Eqs.~\eqref{Eqn:Hopf_Cole_BoLM}--\eqref{Eqn:Hopf_Cole_vBC}, which is formulated in terms of $p(\mathbf{x})$. Once $\widetilde{p}(\mathbf{x})$ is known, $p(\mathbf{x})$ can be calculated directly from Eq.~\eqref{Eqn:Hopf_Cole_Ptilde_definition}. The velocity field, however, remains unchanged under both the boundary value problems (cf. Eqs.~\eqref{Eqn:Hopf_Cole_BoLM}--\eqref{Eqn:Hopf_Cole_vBC} and Eqs.~\eqref{Eqn:Hopf_Cole_BoLM_Conservative}--\eqref{Eqn:Hopf_Cole_vBC_Conservative}). 

Nonetheless, Eq.~\eqref{Eqn:Hopf_Cole_BoLM_Conservative} still remains nonlinear due to the presence of $g\big(\widetilde{p}(\mathbf{x})\big)$. Appealing to the Hopf-Cole technique, we introduce a new field variable, $\mathcal{P}(\mathbf{x})$, defined by the following transformation:
\begin{align}
    \label{Eqn:Hopf_Cole_Transformed_variable}
    \widetilde{p}(\mathbf{x}) 
    = f\big(\mathcal{P}(\mathbf{x})\big)
\end{align}
We will choose a specific form of this transformation later. Applying the (spatial) gradient operator on both sides of the preceding equation, we write: 
\begin{align}
    \label{Eqn:Hopf_Cole_Gradient_of_trasformed_variable}
    \mathrm{grad}\big[\widetilde{p}(\mathbf{x})\big] 
    = \frac{\mathrm{d}f\big(\mathcal{P}\big)}{\mathrm{d}\mathcal{P}} \, 
    \mathrm{grad}\big[\mathcal{P}(\mathbf{x})\big]
\end{align}
Combining Eqs.~\eqref{Eqn:Hopf_Cole_Gradient_of_trasformed_variable} and  \eqref{Eqn:Hopf_Cole_BoLM_Conservative}, we arrive at:
\begin{align}
    \label{Eqn:Hopf_Cole_v_in_terms_of_mathcalP}
    \mathbf{v}(\mathbf{x}) 
    &= - \Big( \widetilde{\mu}_0(\mathbf{x})\, g\big(\widetilde{p}(\mathbf{x})\big)\Big)^{-1} \,\mathbf{K}(\mathbf{x}) \, \text{grad}\big[\widetilde{p}(\mathbf{x})\big] 
    \nonumber \\
    &= -\left(\frac{1}{ g\Big(f\big(\mathcal{P}(\mathbf{x})\big)\Big)}\, \frac{\mathrm{d}f\big(\mathcal{P}\big)}{\mathrm{d}\mathcal{P}} \right) 
    \, \frac{1}{\widetilde{\mu}_0(\mathbf{x})} 
    \, \mathbf{K}(\mathbf{x}) \,  \text{grad}\big[\mathcal{P}(\mathbf{x})\big]
\end{align}
Invoking Eq.~\eqref{Eqn:Hopf_Cole_BoM} and the product rule for differentiation, we proceed as follows: 
\begin{align}
    \label{Eqn:Hopf_Cole_modified_nonlinear_BoLM}
    0 = \mathrm{div}\big[\mathbf{v}(\mathbf{x})\big] 
    &= -\text{div}\left[ \frac{1}{\widetilde{\mu}_0(\mathbf{x})} \,\mathbf{K}(\mathbf{x})\,\text{grad}\big[\mathcal{P}(\mathbf{x})\big] \right] \left(\underbrace{\frac{1}{ g\Big(f\big(\mathcal{P}(\mathbf{x})\big)\Big)}\, \frac{\mathrm{d}f\big(\mathcal{P}\big)}{\mathrm{d}\mathcal{P}}}_{\text{term 1}} \right) 
    \nonumber \\ 
    &\qquad \qquad - \frac{1}{\widetilde{\mu}_0(\mathbf{x})} \, \mathbf{K}(\mathbf{x})\, \big\| \text{grad}\big[\mathcal{P}(\mathbf{x})\big]\big\|^{2} 
    \left(\underbrace{\frac{\mathrm{d}}{\mathrm{d}\mathcal{P}} \left[\frac{1}{g\Big(f\big(\mathcal{P}(\mathbf{x})\big)\Big)} \frac{\mathrm{d}f\big(\mathcal{P}\big)}{\mathrm{d}\mathcal{P}}\right]}_{\text{term 2}}\right) 
\end{align}
where
\begin{align}
    \big\| \text{grad}\big[\mathcal{P}(\mathbf{x})\big] \big\|^{2}
    =  \text{grad}\big[\mathcal{P}(\mathbf{x})\big] \bullet \text{grad}\big[\mathcal{P}(\mathbf{x})\big]
\end{align}
One way to eliminate nonlinearity in Eq.~\eqref{Eqn:Hopf_Cole_modified_nonlinear_BoLM} is to choose the function $f\big(\mathcal{P}(\mathbf{x})\big)$ such that term \#2 vanishes and term \#1 is non-zero. That is, 
\begin{align}
    \label{Eqn:Hopf_Cole_term_2}
    &\frac{\mathrm{d}}{\mathrm{d}\mathcal{P}} \left[\frac{1}{g\Big(f\big(\mathcal{P}(\mathbf{x})\big)\Big)} \frac{\mathrm{d}f\big(\mathcal{P}\big)}{\mathrm{d}\mathcal{P}}\right] = 0 \\ 
    \label{Eqn:Hopf_Cole_term_1}
    &\frac{1}{ g\Big(f\big(\mathcal{P}(\mathbf{x})\big)\Big)}\, \frac{\mathrm{d}f\big(\mathcal{P}\big)}{\mathrm{d}\mathcal{P}} \neq 0 
\end{align}
Our approach to obtaining such a function $f\big(\mathcal{P}\big)$ is to seek a solution for Eq.~\eqref{Eqn:Hopf_Cole_term_2} and check whether this solution makes term \#1 non-zero (i.e., satisfies Eq.~\eqref{Eqn:Hopf_Cole_term_1}). To find the solution of Eq.~\eqref{Eqn:Hopf_Cole_term_2}, we integrate the equation once with respect to $\mathcal{P}$ and get:
\begin{align}
    \label{Eqn:Hopf_Cole_Arbitrary_constant_A}
    \frac{1}{g\Big(f\big(\mathcal{P}(\mathbf{x})\big)\Big)} \, \frac{\mathrm{d}f\big(\mathcal{P}\big)}{\mathrm{d}\mathcal{P}} = A
\end{align}
where $A$ is an integration constant. By integrating the preceding equation once more, we arrive at: 
\begin{align}
    \int \frac{1}{g\big(f\big)} \, \mathrm{d}f = A \, \mathcal{P} + B
\end{align}
with $B$ being another integration constant. Using the specific form of $g$ given by Eq.~\eqref{Eqn:Hopf_Cole_g_of_ptilde} and executing the above integral, we get:
\begin{align}
    \label{Eqn:Hopf_Cole_f_in_terms_of_A_and_B}
    -\frac{p_0}{\beta} \,\text{exp}\left[ -\beta \, 
    \left(\frac{f}{p_0} - 1\right) \right] 
    = A \, \mathcal{P} + B
\end{align}

Up to this point, the constants $A$ and $B$ remain arbitrary. We now choose 
\begin{align}
    \label{Eqn:Hopf_Cole_selection_of_A_and_B}
    A = 1 \quad \mathrm{and} \quad B = 0 
\end{align}
Substituting the above constants into Eq.~\eqref{Eqn:Hopf_Cole_f_in_terms_of_A_and_B}, we obtain an explicit form of the function $f\big(\mathcal{P}(\mathbf{x})\big)$ and, thus, a mapping between $\widetilde{p}(\mathbf{x})$ and $\mathcal{P}(\mathbf{x})$: 
\begin{align}
    \label{Eqn:Hopf_Cole_Functional_form_of_mathcal_P}
    \widetilde{p}(\mathbf{x}) 
    \equiv f\big(\mathcal{P}(\mathbf{x})\big) 
    = p_0 \left(1 - \frac{1}{\beta} \, \ln\left[-\frac{\beta\,\mathcal{P}(\mathbf{x})}{p_0}\right] \right) 
\end{align}
Figure \ref{Fig:Hopf_Cole_mathcal_P_graph} provides a visual representation of the mapping described above. The expression for the original pressure field, which is related to $\widetilde{p}(\mathbf{x})$ via Eq.~\eqref{Eqn:Hopf_Cole_Ptilde_definition}, reads: 
\begin{align}
    \label{Eqn:Hopf_Cole_Functional_form_of_original_p}
    p(\mathbf{x}) = -\xi(\mathbf{x}) + 
    p_0 \left(1 - \frac{1}{\beta} \, \ln\left[-\frac{\beta\,\mathcal{P}(\mathbf{x})}{p_0}\right] \right) 
\end{align}
Finally, for completeness, the inverse transformation, which takes $p(\mathbf{x})$ to $\mathcal{P}(\mathbf{x})$, can be explicitly written as:  
\begin{align}
    \label{Eqn:Hopf_Cole_h_transformation}
    \mathcal{P}(\mathbf{x}) 
    \equiv h\big(\mathbf{x},p(\mathbf{x})\big) 
    := -\frac{p_0}{\beta} \, 
    \exp\left[-\beta \left(\frac{p(\mathbf{x}) 
    + \xi(\mathbf{x})}{p_0} - 1\right)\right] 
\end{align}

\begin{figure}[ht]
    \centering
    \includegraphics[scale=0.55]{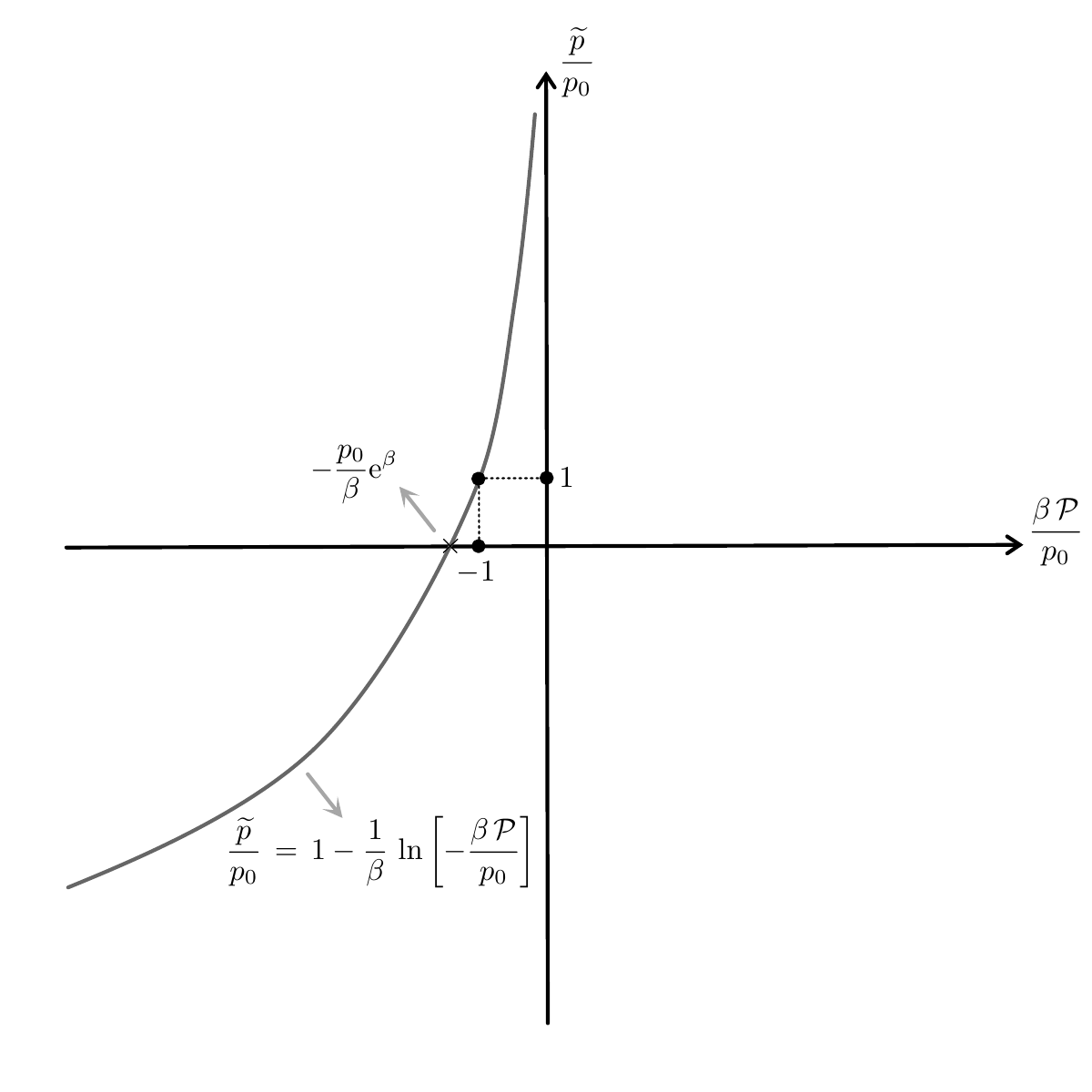}
    \caption{Graphical representation of $\frac{\tilde{p}}{p_0}$ vs. $\mathcal{P}$ based on Eq.~\eqref{Eqn:Hopf_Cole_Functional_form_of_mathcal_P}.}
    \label{Fig:Hopf_Cole_mathcal_P_graph}
\end{figure}

We now reformulate the boundary value problem \eqref{Eqn:Hopf_Cole_BoLM_Conservative}--\eqref{Eqn:Hopf_Cole_vBC_Conservative}, originally expressed in terms of $\widetilde{p}(\mathbf{x})$, using the newly introduced variable $\mathcal{P}(\mathbf{x})$. The transformed equations, written in terms of $\mathcal{P}(\mathbf{x})$ and $\mathbf{v}(\mathbf{x})$, take the following form:
\begin{subequations}
    \begin{alignat}{2}
       \label{Eqn:Hopf_Cole_Linear_BoLM}
        &\widetilde{\mu}_0(\mathbf{x}) \, \mathbf{K}^{-1}(\mathbf{x}) \, 
        \mathbf{v}(\mathbf{x}) 
        + \mathrm{grad}\big[\mathcal{P}(\mathbf{x})\big] = \mathbf{0} 
        &&\quad \mathrm{in} \; \Omega \\ 
        \label{Eqn:Hopf_Cole_Linear_BoM}
        &\mathrm{div}\big[\mathbf{v}(\mathbf{x})\big] = 0 
        &&\quad \mathrm{in} \; \Omega \\ 
        \label{Eqn:Hopf_Cole_Linear_pBC}
        &\mathcal{P}(\mathbf{x}) = \mathcal{P}_{\mathrm{p}}(\mathbf{x}) 
        &&\quad \mathrm{on} \; \Gamma^{p} \\ 
        \label{Eqn:Hopf_Cole_Linear_vBC}
        & \mathbf{v}(\mathbf{x}) \bullet \widehat{\mathbf{n}}(\mathbf{x}) = v_n(\mathbf{x}) 
        &&\quad \mathrm{on} \; \Gamma^{v} 
    \end{alignat}
\end{subequations}
where 
\begin{align}
    \label{Eqn:Hopf_Cole_mathcalP_p_definition}
    \mathcal{P}_{\mathrm{p}}(\mathbf{x}) 
    = h\big(\mathbf{x},p_{\mathrm{p}}(\mathbf{x})\big)
\end{align}
with $h\big(\mathbf{x},p(\mathbf{x})\big)$ defined in Eq.~\eqref{Eqn:Hopf_Cole_h_transformation}. The above boundary value problem resembles that of the classical Darcy equations, which form a system of linear partial differential equations. 

\begin{remark}
    The Kirchhoff approach can be used to derive an alternative transformation that converts the nonlinear Barus model into the classical Darcy equations. In many cases, the transformation under the Kirchhoff method is more complex than the one derived using the Hopf-Cole method. Interestingly, the Kirchhoff transformation can be shown to be a special case of the Hopf-Cole approach. For a more detailed comparison of these two methods, refer to Appendix \ref{App:Hopf_Cole_Kirchhoff}.
\end{remark}

\subsection{Proposed approach} The ultimate aim is to find a solution for the original nonlinear boundary value problem \eqref{Eqn:Hopf_Cole_BoLM}--\eqref{Eqn:Hopf_Cole_vBC}, given the boundary conditions $\widetilde{p}_{\mathrm{p}}(\mathbf{x})$ and $v_n(\mathbf{x})$. Instead of directly handling the nonlinearity, we propose a new approach based on the Hopf-Cole transformation.

The steps for solving a boundary value problem arising from the Barus model using the proposed approach are:
\begin{enumerate}
    \item[Step 1.]  Use the inverse transformation given by Eq.~\eqref{Eqn:Hopf_Cole_h_transformation} to compute the transformed boundary conditions: $\mathcal{P}_{\mathrm{p}}(\mathbf{x})$ and $v_n(\mathbf{x})$. Note that the velocity boundary conditions remain unaffected by the transformation. 
    \item[Step 2.] Solve the resulting linear boundary value problem given by Eqs.~\eqref{Eqn:Hopf_Cole_Linear_BoLM}--\eqref{Eqn:Hopf_Cole_Linear_vBC}. The output will be $\mathcal{P}(\mathbf{x})$ and $\mathbf{v}(\mathbf{x})$.
    \item[Step 3.] Apply the forward transformation given by Eq.~\eqref{Eqn:Hopf_Cole_Functional_form_of_original_p} to obtain the original pressure field $\widetilde{p}(\mathbf{x})$ from $\mathcal{P}(\mathbf{x})$. 
\end{enumerate}
Figure~\ref{Fig:Hopf_Cole_Flowchart} provides a pictorial summary of the work-flow for both the original and proposed approaches.

Since the proposed approach eliminates the nonlinearity in the boundary value problem \eqref{Eqn:Hopf_Cole_BoLM}--\eqref{Eqn:Hopf_Cole_vBC}, it offers a powerful alternative for obtaining numerical solutions without the need for a computationally expensive nonlinear solver. This advantage is further explored in Section \ref{Sec:S6_Hopf_Cole_NR}. Additionally, the approach can be leveraged to establish mathematical results for the original nonlinear boundary value problem, similar to many known results for the classical Darcy equations, which constitute a linear system.

\begin{figure}[h]
    \centering
    \includegraphics[width=0.65\linewidth]{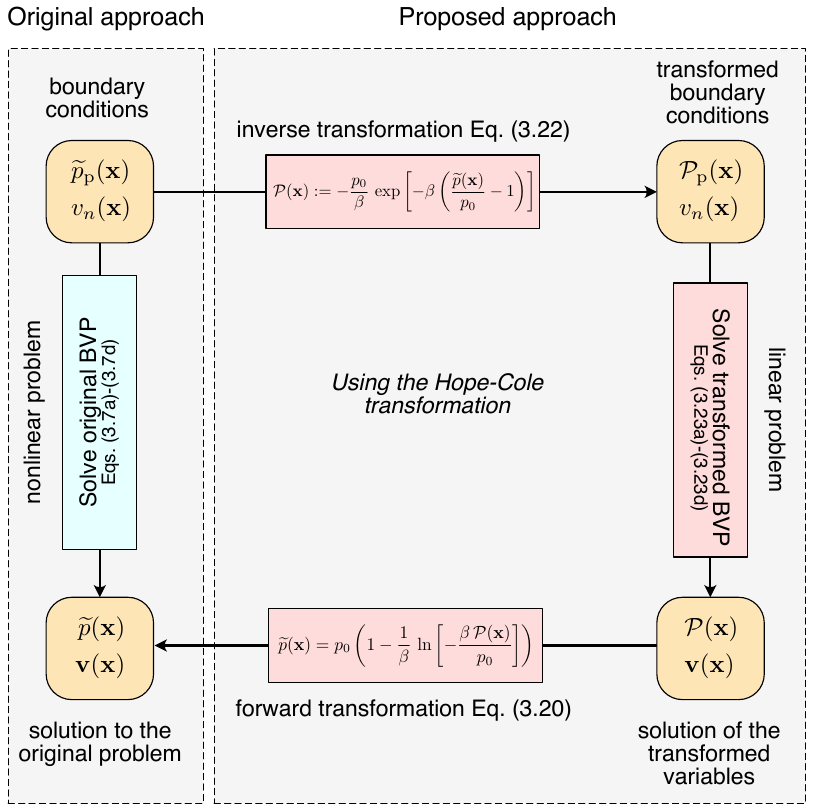}
    \caption{A flowchart showing the steps involved in obtaining the solution fields under the original and proposed approaches.
    \label{Fig:Hopf_Cole_Flowchart}}
\end{figure}

\subsection{Characteristics of the transformation}
To carry out the said study, we shall document some properties of the transformation $f\big(\mathcal{P}(\mathbf{x})\big)$, as given by Eq.~\eqref{Eqn:Hopf_Cole_Functional_form_of_mathcal_P}, which will play a crucial role. (Recall that Fig.~\ref{Fig:Hopf_Cole_mathcal_P_graph} offers a graphical representation of the transformation.)
\begin{enumerate}
    \item The domain of the transformation is $(-\infty,0)$, while the range is the entire real line. In other words, $\widetilde{p}(\mathbf{x})$ can take any real value, while the transformed variable $\mathcal{P}(\mathbf{x})$ can only take negative real values. 
    \item What is important, particularly when discussing the existence of solutions, is that a positive value for $\mathcal{P}(\mathbf{x})$ will not have a corresponding $\widetilde{p}(\mathbf{x})$, and therefore no corresponding real pressure $p(\mathbf{x})$. 
    \item Lastly, from the graph it is evident that the transformation from $\mathcal{P}(\mathbf{x})$ to $\widetilde{p}(\mathbf{x})$ is continuous, smooth, monotonically increasing, and bijective---hence invertible.
\end{enumerate} 

\section{MATHEMATICAL ANALYSIS}
\label{Sec:S4_Hopf_Cole_Mathematical}
In this section, we establish several qualitative mathematical properties of the nonlinear boundary value problem \eqref{Eqn:Hopf_Cole_BoLM}--\eqref{Eqn:Hopf_Cole_vBC}, including minimum and maximum principles, the comparison principle, and the uniqueness of solutions. These results can be derived using nonlinear analysis tools. For example, one can apply the comparison principle for quasilinear elliptic partial differential equations proposed by \citet{trudinger1974comparison} to the nonlinear boundary value problem. The other results will then follow from the comparison principle.

However, in this paper, we demonstrate that the proposed approach based on the Hopf-Cole transformation provides alternative and simpler proofs without relying on the theory of quasilinear partial differential equations. Our method exploits the properties of the Hopf-Cole transformation (i.e., Eq.~\eqref{Eqn:Hopf_Cole_Functional_form_of_mathcal_P}) and the fact that solutions of the transformed linear boundary value problem \eqref{Eqn:Hopf_Cole_Linear_BoLM}--\eqref{Eqn:Hopf_Cole_Linear_vBC} (which correspond to the classical Darcy equations) satisfy key properties, including minimum and maximum principles as well as the comparison principle.

These alternative proofs certainly have theoretical significance. In addition, the proofs are accessible to engineers, who often have exposure to the classical Darcy equations, their properties, and the rudiments of linear functional analysis. Thus, the presentation in this section has clear pedagogical value.

We conclude this section by outlining a class of problems for which solutions do not exist under the Barus model. This result is significant as it highlights the limitations of the Barus model's applicability---a point that has been overlooked in the literature.

\subsection{Mathematical properties of $\mathcal{P}(\mathbf{x})$} 

We begin by establishing properties for the transformed variable, which satisfies the classical Darcy equations. To accomplish this, we recast the associated boundary value problem (given by Eqs.~\eqref{Eqn:Hopf_Cole_Linear_BoLM}--\eqref{Eqn:Hopf_Cole_Linear_vBC}) by eliminating $\mathbf{v}(\mathbf{x})$ and expressing it solely in terms of $\mathcal{P}(\mathbf{x})$. This reformulation converts the governing equations from mixed form to a second-order elliptic partial differential equation in a single variable: $\mathcal{P}(\mathbf{x})$. 

The reformulated boundary value problem that $\mathcal{P}(\mathbf{x})$ satisfies takes the following form:
\begin{subequations}
    \label{Eqn:Hopf_Cole_Pressure_Darcy_formulatiom}
    \begin{alignat}{2}
      \label{Eqn:Hopf_Cole_Pressure_BVP_GE}
        -&\mathrm{div}\left[ \frac{1}{\widetilde{\mu}_{0}(\mathbf{x})}\,\mathbf{K}(\mathbf{x})\, \text{grad}\big[\mathcal{P}(\mathbf{x})\big]\right] = 0 
        &&\quad \mathrm{in} \; \Omega \\ 
        \label{Eqn:Hopf_Cole_Pressure_BVP_pBC}
        &\mathcal{P}(\mathbf{x}) = \mathcal{P}_{\mathrm{p}}(\mathbf{x}) 
        &&\quad \mathrm{on} \; \Gamma^{p}\\
        \label{Eqn:Hopf_Cole_Pressure_BVP_vBC}
        -&\frac{1}{\widetilde{\mu}_{0}(\mathbf{x})}\,\mathbf{K}(\mathbf{x})\, \text{grad}\big[\mathcal{P}(\mathbf{x})\big] \bullet \widehat{\mathbf{n}}(\mathbf{x}) = v_{n}(\mathbf{x})
        &&\quad \mathrm{on} \; \Gamma^{v}
    \end{alignat}
\end{subequations}
Note that $\widetilde{\mu}_{0}(\mathbf{x}) > 0$.

Although the maximum and minimum principles can be established directly in the strong form (e.g., see \citep{gilbarg1977elliptic}), we instead adopt the technique of \citet{mudunuru2016enforcing} and use the weak form to prove these principles.

Accordingly, we introduce the following function spaces: 
\begin{subequations}
    \begin{alignat}{2}
        \mathcal{U} & :=\left\{\mathcal{P}(\mathbf{x}) \in H^{1}(\Omega) \mid \text{trace}(\mathcal{P}(\mathbf{x}))=\mathcal{P}_{\mathrm{p}}(\mathbf{x}) \in L^{2}(\Gamma^{p}) \right\} \\
        \mathcal{W} & :=\left\{w(\mathbf{x}) \in H^{1}(\Omega) \mid \text{trace}(w(\mathbf{x})) = 0 \right\}
    \end{alignat}
\end{subequations}
where $H^{1}(\Omega)$ is a standard Sobolev space, $L^{2}(\Gamma^{p})$ is the space of square-integrable functions defined on $\Gamma^{p}$, and trace($\cdot$) denotes the standard trace operator from functional analysis, which assigns to a function its boundary values in a suitable functional space
\citep{evanspartial}. 

The weak form based on the Galerkin formalism reads: Find $\mathcal{P}(\mathbf{x}) \in \mathcal{U}$ such that we have 
\begin{align}
    \label{Eqn:Hopf_Cole_Galerkin_weak_form_pressure_1}
    \int_{\Omega} \text{grad}\big[w(\mathbf{x})\big] \, \bullet \, 
    \frac{1}{\widetilde{\mu}_{0}(\mathbf{x})}\, \mathbf{K}(\mathbf{x}) 
    \, \text{grad}\big[\mathcal{{P}}(\mathbf{x})\big] \, \mathrm{d}\Omega 
    + \int_{\Gamma^{v}} w(\mathbf{x}) \, v_{n}(\mathbf{x}) \, \mathrm{d}\Gamma  
    = 0 \quad \forall w(\mathbf{x}) \in \mathcal{W}
\end{align}

\begin{lemma}[Minimum principle for $\mathcal{P}(\mathbf{x})$]
    \label{Lemma:Hopf_Cole_Minimum_principle_Darcy}
    Let $\mathcal{P}(\mathbf{x}) \in C^1(\Omega) \cap C^{0}(\overline{\Omega})$ be a solution of Galerkin weak formulation (i.e., Eq.~\eqref{Eqn:Hopf_Cole_Galerkin_weak_form_pressure_1}). If 
    \begin{align}
      \label{Eqn:BC_min}
      v_n(\mathbf{x}) \, \leq 0 \quad \text{on} \; \Gamma^{v} 
    \end{align}
    then $\mathcal{P}(\mathbf{x})$ satisfies the following lower bound:
    \begin{equation}
        \min_{\mathbf{x} \in \Gamma^{p}} \left[\mathcal{P}_\mathrm{p}(\mathbf{x})\right] \leq \mathcal{P}(\mathbf{x}) 
        \quad \forall \mathbf{x} \in \overline\Omega
    \end{equation}
    That is, the transformed field variable $\mathcal{P}(\mathbf{x})$ attains its minimum on the boundary where the pressure is prescribed. 
\end{lemma}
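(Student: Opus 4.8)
\textit{Plan.} The strategy is to reproduce the classical weak-form proof of the minimum principle for a linear second-order elliptic equation in divergence form, using the fact that Eq.~\eqref{Eqn:Hopf_Cole_Galerkin_weak_form_pressure_1} is precisely of that type: the coefficient $\frac{1}{\widetilde{\mu}_{0}(\mathbf{x})}\mathbf{K}(\mathbf{x})$ is uniformly elliptic and bounded, being the product of the uniformly elliptic, bounded $\mathbf{K}(\mathbf{x})$ with the strictly positive, bounded scalar $\frac{1}{\widetilde{\mu}_{0}(\mathbf{x})}$. Set $m := \min_{\mathbf{x}\in\Gamma^{p}}\mathcal{P}_{\mathrm{p}}(\mathbf{x})$, which exists since $\Gamma^{p}$ is compact and $\mathcal{P}_{\mathrm{p}}$ is continuous (and $\Gamma^{p}\neq\emptyset$ is implicit, otherwise the statement is vacuous). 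Introduce the truncated function $w(\mathbf{x}) := \min\{\mathcal{P}(\mathbf{x}) - m,\, 0\}$, which is the natural test function here because it isolates the region where $\mathcal{P}$ dips below the claimed lower bound.

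First I would check that $w$ is an admissible test function, i.e.\ $w\in\mathcal{W}$. It lies in $H^{1}(\Omega)$ by the standard chain rule for truncations of $H^{1}$ functions (Stampacchia's lemma), with $\mathrm{grad}[w] = \mathrm{grad}[\mathcal{P}]$ a.e.\ on $\{\mathbf{x}:\mathcal{P}(\mathbf{x})<m\}$ and $\mathrm{grad}[w] = \mathbf{0}$ a.e.\ on $\{\mathbf{x}:\mathcal{P}(\mathbf{x})\geq m\}$; and its trace on $\Gamma^{p}$ vanishes because $\mathcal{P}(\mathbf{x}) = \mathcal{P}_{\mathrm{p}}(\mathbf{x})\geq m$ there. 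The assumed regularity $\mathcal{P}\in C^{1}(\Omega)\cap C^{0}(\overline{\Omega})$ makes all of these claims immediate.

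Next I would substitute $w$ into Eq.~\eqref{Eqn:Hopf_Cole_Galerkin_weak_form_pressure_1}. Since $\mathrm{grad}[\mathcal{P}] = \mathrm{grad}[w]$ wherever $\mathrm{grad}[w]\neq\mathbf{0}$, the domain integral collapses to $\int_{\Omega}\mathrm{grad}[w]\bullet\frac{1}{\widetilde{\mu}_{0}(\mathbf{x})}\mathbf{K}(\mathbf{x})\,\mathrm{grad}[w]\,\mathrm{d}\Omega$, which is nonnegative by uniform ellipticity---indeed bounded below by a positive multiple of $\|\mathrm{grad}[w]\|_{L^{2}(\Omega)}^{2}$. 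The boundary integral $\int_{\Gamma^{v}}w(\mathbf{x})\,v_{n}(\mathbf{x})\,\mathrm{d}\Gamma$ is also nonnegative, since $w\leq 0$ everywhere and, by hypothesis Eq.~\eqref{Eqn:BC_min}, $v_{n}\leq 0$ on $\Gamma^{v}$. As the sum of these two nonnegative quantities equals zero, each must vanish; in particular $\mathrm{grad}[w] = \mathbf{0}$ a.e., so $w$ is constant on $\Omega$ (on each connected component), and since its trace is zero on $\Gamma^{p}$ we conclude $w\equiv 0$, i.e.\ $\mathcal{P}(\mathbf{x})\geq m$ for all $\mathbf{x}\in\overline{\Omega}$.

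The only genuinely delicate points---the main obstacle---are the functional-analytic bookkeeping for $w$ (that $w\in H^{1}(\Omega)$, that the chain rule yields the stated gradient, and that its trace on $\Gamma^{p}$ is zero) and the passage from $\mathrm{grad}[w] = \mathbf{0}$ to $w\equiv 0$, which requires $\Omega$ to be connected and $\Gamma^{p}$ to have positive surface measure so that the vanishing trace pins down the constant. Both are standard, and the stated $C^{1}\cap C^{0}$ regularity of $\mathcal{P}$ streamlines the first; the remainder is a short computation. I would also remark that the same argument with $w := \max\{\mathcal{P}(\mathbf{x}) - M,\, 0\}$, $M := \max_{\Gamma^{p}}\mathcal{P}_{\mathrm{p}}$, and the reversed sign hypothesis $v_{n}\geq 0$ on $\Gamma^{v}$ delivers the companion maximum principle for $\mathcal{P}(\mathbf{x})$.
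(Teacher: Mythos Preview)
Your proposal is correct and is essentially identical to the paper's proof: the paper defines $\Phi_{\min}:=\min_{\Gamma^{p}}\mathcal{P}_{\mathrm{p}}$ and the truncation $\eta(\mathbf{x}):=\min\{0,\mathcal{P}(\mathbf{x})-\Phi_{\min}\}$, inserts it as the test function in the weak form, replaces $\mathrm{grad}[\mathcal{P}]$ by $\mathrm{grad}[\eta]$, and concludes from the vanishing of the two nonnegative terms that $\eta\equiv 0$. Your remarks on the needed side conditions (connectedness of $\Omega$, $\Gamma^{p}$ of positive measure) and your invocation of Stampacchia's lemma are a bit more explicit than the paper, but the argument is the same.
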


\begin{proof}
    Let us introduce
    \begin{equation}
    \label{Eqn:Phi_min}
        \Phi_{\text{min}} =\min_{\mathbf{x} \in \Gamma^{p}} \left[\mathcal{P}_\mathrm{p}(\mathbf{x})\right] 
    \end{equation}
    We also define 
     \begin{equation}
     \label{Eqn:Eta_definition}
        \eta(\mathbf{x}) :=\min\big[ 0,\,  \mathcal{P}(\mathbf{x}) \, -\, \Phi_{\text{min}} \big]
    \end{equation}
    The newly introduced field variable $\eta(\mathbf{x})$ satisfies the following properties:
    \begin{enumerate}
    \item $\eta(\mathbf{x}) \, \in \, H^1(\Omega) \cap C^{0}(\overline{\Omega})$
    \item $\eta(\mathbf{x}) = 0 \quad \text{on}\; \Gamma^{p}$
    \item \label{prop:eta_condition}$\eta(\mathbf{x}) \leq 0 \quad \forall \mathbf{x}\, \in \,\overline{\Omega}$
    \item At any $\mathbf{x}$, either $\eta(\mathbf{x}) \, = \, 0 $ or $\eta(\mathbf{x}) \,+\, \Phi_{\text{min}}\, = \, \mathcal{P}(\mathbf{x})$
    \end{enumerate}

    Property \#2 implies that $\eta(\mathbf{x})$ can serve as a candidate for the weighting function. Setting $w(\mathbf{x}) = \eta(\mathbf{x})$ in the Galerkin weak formulation \eqref{Eqn:Hopf_Cole_Galerkin_weak_form_pressure_1}, we get the following: 
    \begin{align}
    \label{Eqn:Hopf_Cole_Galerkin_weak_form_pressure_updated_1}
     \int_{\Omega} \text{grad}\big[\eta(\mathbf{x})\big] \bullet \frac{1}{\widetilde{\mu}_{0}(\mathbf{x})}\, \mathbf{K}(\mathbf{x}) \, \text{grad}\big[\mathcal{P}(\mathbf{x})\big] \, \mathrm{d}\Omega + \int_{\Gamma^{v}} \eta(\mathbf{x}) \, v_{n}(\mathbf{x}) \, \mathrm{d}\Gamma = 0 
    \end{align}
    Making use of Property \#4, we rewrite the preceding equation as follows:
    \begin{align}
     \label{Eqn:Galerkin_weak_form_pressure_updated_2}
       \int_{\Omega} \text{grad}\big[\eta(\mathbf{x})\big] \bullet  \frac{1}{\widetilde{\mu}_{0}(\mathbf{x})}\, \mathbf{K}(\mathbf{x}) \, \text{grad}\big[\eta(\mathbf{x}) +  \Phi_{\text{min}}\big] \, \mathrm{d}\Omega +  
       \int_{\Gamma^{v}} \eta(\mathbf{x}) \, v_{n}(\mathbf{x}) \, \mathrm{d}\Gamma = 0 
    \end{align}
    Noting that $\Phi_{\mathrm{min}}$ is a constant, we arrive at the following:
    \begin{align}
     \label{Eqn:Galerkin_weak_form_pressure_updated_3}
       \int_{\Omega} \text{grad}\big[\eta(\mathbf{x})\big] \bullet \frac{1}{\widetilde{\mu}_{0}(\mathbf{x})}\, \mathbf{K}(\mathbf{x}) \, \text{grad}\big[\eta(\mathbf{x})\big] \, \mathrm{d}\Omega + \int_{\Gamma^{v}} \eta(\mathbf{x}) \, v_{n}(\mathbf{x}) \, \mathrm{d}\Gamma 
       = 0 
    \end{align}
    
    Since the permeability tensor $\mathbf{K}(\mathbf{x})$ is positive definite (in fact, it is uniformly elliptic) and $\widetilde{\mu}_0(\mathbf{x}) > 0$, the first integral in the above equation is non-negative. As $\eta(\mathbf{x}) \leq 0$ (i.e., Property \#3) and $v_n(\mathbf{x}) \leq 0$, the second integral is also non-negative. Therefore, we conclude that both these integrals should vanish separately: 
    \begin{align}
        \label{Eqn:Hopf_Cole_semi_norm}
       &\int_{\Omega} \text{grad}\big[\eta(\mathbf{x})\big] \, \bullet \, \frac{1}{\widetilde{\mu}_{0}(\mathbf{x})}\, \mathbf{K}(\mathbf{x}) \, \text{grad}\big[\eta(\mathbf{x})\big] \, \mathrm{d}\Omega = 0 \\
       \label{Eqn:Hopf_Cole_eta_term_on_gammav}
       &\int_{\Gamma^{v}} \eta(\mathbf{x}) \, v_{n}(\mathbf{x}) \, \mathrm{d}\Gamma = 0 
    \end{align}
    
    As the integral in Eq.~\eqref{Eqn:Hopf_Cole_semi_norm} is the square of a semi-norm---to be precise, it is a weighted $H^1$ semi-norm with weight $\frac{1}{\widetilde{\mu}_{0}(\mathbf{x})}\mathbf{K}(\mathbf{x})$---and $\eta(\mathbf{x}) \in C^{0}(\overline{\Omega})$, we assert that 
    \begin{equation}
        \label{Eqn:Cole_Hopf_eta_constant_in_Omega}
        \eta(\mathbf{x}) = \mathrm{constant} \quad \forall \mathbf{x} \in \overline{\Omega}
    \end{equation}
    Eq.~\eqref{Eqn:Hopf_Cole_eta_term_on_gammav} and $\eta(\mathbf{x}) = 0$ on $\Gamma^{p}$ (Property \#2) together imply that 
    \begin{align}
        \label{Eqn:Cole_Hopf_eta_constant_on_partial_Omega}
        \eta(\mathbf{x}) = 0 \quad \mathrm{on} \; \partial \Omega 
    \end{align}
    Eqs.~\eqref{Eqn:Cole_Hopf_eta_constant_in_Omega} and \eqref{Eqn:Cole_Hopf_eta_constant_on_partial_Omega} alongside $\eta(\mathbf{x}) \in C^{0}(\overline{\Omega})$ establishes the following:  
    \begin{equation}
        \eta(\mathbf{x}) \, = \, 0 \quad \forall \mathbf{x} \in \overline{\Omega}
    \end{equation}
    Finally, using Eq.~\eqref{Eqn:Eta_definition}, we conclude:
    \begin{equation}
        \Phi_{\text{min}} \,\leq \,\mathcal{P}(\mathbf{x})
    \end{equation}
    which, based on the definition of $\Phi_{\text{min}}$ provided in Eq.~\eqref{Eqn:Phi_min}, yields the intended result:
    \begin{equation}
        \min_{\mathbf{x} \in \Gamma^{p}} \left[\mathcal{P}_\mathrm{p}(\mathbf{x})\right] \leq \mathcal{P}(\mathbf{x})  \quad \forall \mathbf{x} \in \overline{\Omega}
    \end{equation}
\end{proof}

\begin{lemma}[Maximum principle for $\mathcal{P}(\mathbf{x})$]
    \label{Lemma:Hopf_Cole_Maximum_principle_Darcy}
    Let $\mathcal{P}(\mathbf{x}) \in C^1(\Omega) \cap C^{0}(\overline{\Omega})$ be a solution of Galerkin weak formulation (i.e., Eq.~\eqref{Eqn:Hopf_Cole_Galerkin_weak_form_pressure_1}). If 
    \begin{align}
      \label{Eqn:BC_max}
      v_n(\mathbf{x}) \, \geq 0 \quad \text{on} \; \Gamma^{v} 
    \end{align}
    then $\mathcal{P}(\mathbf{x})$ satisfies the following upper bound:
    \begin{align}
    \mathcal{P}(\mathbf{x}) \leq 
    \max_{\mathbf{x} \in \Gamma^{p}} \mathcal{P}_\mathrm{p}(\mathbf{x})
    \quad \forall \mathbf{x} \in \overline{\Omega}
    \end{align}
    That is, the transformed field variable $\mathcal{P}(\mathbf{x})$ attains its maximum on the boundary where the pressure is prescribed. 
\end{lemma}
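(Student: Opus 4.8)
The plan is to mirror the argument used for Lemma~\ref{Lemma:Hopf_Cole_Minimum_principle_Darcy}, exploiting the sign symmetry of the governing weak form. First I would set $\Phi_{\max} := \max_{\mathbf{x} \in \Gamma^{p}} \mathcal{P}_{\mathrm{p}}(\mathbf{x})$ and introduce the truncation $\eta(\mathbf{x}) := \max\!\big[0,\, \mathcal{P}(\mathbf{x}) - \Phi_{\max}\big]$. Exactly as before, $\eta$ lies in $H^{1}(\Omega) \cap C^{0}(\overline{\Omega})$; it vanishes on $\Gamma^{p}$ (there $\mathcal{P} = \mathcal{P}_{\mathrm{p}} \leq \Phi_{\max}$, so the $\max$ selects $0$); it satisfies $\eta(\mathbf{x}) \geq 0$ on $\overline{\Omega}$; and at every point either $\eta(\mathbf{x}) = 0$ or $\eta(\mathbf{x}) + \Phi_{\max} = \mathcal{P}(\mathbf{x})$.

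Next I would use $\eta$ as the weighting function $w$ in the Galerkin weak form \eqref{Eqn:Hopf_Cole_Galerkin_weak_form_pressure_1}, which is admissible since $\eta = 0$ on $\Gamma^{p}$. Replacing $\mathrm{grad}[\mathcal{P}]$ by $\mathrm{grad}[\eta]$ via the pointwise fourth property (the gradient of the additive constant $\Phi_{\max}$ drops out) gives
\begin{align}
  \int_{\Omega} \mathrm{grad}[\eta(\mathbf{x})] \bullet \frac{1}{\widetilde{\mu}_{0}(\mathbf{x})} \, \mathbf{K}(\mathbf{x}) \, \mathrm{grad}[\eta(\mathbf{x})] \, \mathrm{d}\Omega + \int_{\Gamma^{v}} \eta(\mathbf{x}) \, v_{n}(\mathbf{x}) \, \mathrm{d}\Gamma = 0 .
\end{align}
By the uniform ellipticity of $\mathbf{K}(\mathbf{x})$ and the positivity of $\widetilde{\mu}_{0}(\mathbf{x})$, the first integral is non-negative; and since $\eta \geq 0$ together with the hypothesis \eqref{Eqn:BC_max} that $v_{n} \geq 0$ on $\Gamma^{v}$, the second integral is also non-negative. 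Hence both integrals must vanish separately.

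Finally, the vanishing of the weighted $H^{1}$ semi-norm together with $\eta \in C^{0}(\overline{\Omega})$ forces $\eta$ to be constant on $\overline{\Omega}$; the vanishing boundary integral together with $\eta = 0$ on $\Gamma^{p}$ (and, if needed, continuity up to the closure) forces $\eta = 0$ on $\partial\Omega$; a constant that vanishes somewhere on $\overline{\Omega}$ vanishes identically. Thus $\mathcal{P}(\mathbf{x}) - \Phi_{\max} \leq 0$ on $\overline{\Omega}$, which is the asserted bound $\mathcal{P}(\mathbf{x}) \leq \max_{\mathbf{x} \in \Gamma^{p}} \mathcal{P}_{\mathrm{p}}(\mathbf{x})$.

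I do not anticipate any substantive obstacle: the only point requiring care is the bookkeeping of signs—namely that the hypothesis $v_{n} \geq 0$ (rather than $\leq 0$) is precisely what keeps the boundary integral non-negative once $\eta$ is taken as the \emph{positive} part of $\mathcal{P} - \Phi_{\max}$. In fact one could sidestep the repetition entirely by applying Lemma~\ref{Lemma:Hopf_Cole_Minimum_principle_Darcy} to $-\mathcal{P}(\mathbf{x})$, which solves the same weak form with $v_{n}$ replaced by $-v_{n}$ and boundary data $-\mathcal{P}_{\mathrm{p}}$; the maximum principle then follows from $\min(-a) = -\max(a)$. I would, however, present the direct $\eta$-based proof for consistency with the preceding lemma.
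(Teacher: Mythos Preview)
Your direct truncation argument is correct, but the paper takes the shortcut you mention only at the end: it defines $\mathcal{P}^{\#}(\mathbf{x}) = -\mathcal{P}(\mathbf{x})$, observes that $\mathcal{P}^{\#}$ satisfies the same boundary value problem with data $\mathcal{P}^{\#}_{\mathrm{p}} = -\mathcal{P}_{\mathrm{p}}$ and $v_n^{\#} = -v_n \leq 0$, applies Lemma~\ref{Lemma:Hopf_Cole_Minimum_principle_Darcy} to $\mathcal{P}^{\#}$, and then reads off the maximum principle via $\min(-a) = -\max(a)$. So the paper's proof is precisely the alternative you flagged and set aside. Your $\eta = \max[0,\mathcal{P}-\Phi_{\max}]$ route is entirely sound and has the pedagogical benefit of making the sign bookkeeping explicit; the paper's reflection trick is shorter because it avoids repeating the semi-norm and boundary-integral analysis, at the cost of introducing an auxiliary variable and a second BVP statement. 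Either is fine, but if you want to match the paper's presentation, lead with the negation argument rather than relegating it to a closing remark.
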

\begin{proof}
    Let us consider
    \begin{align}
    \mathcal{P}^{\#}(\mathbf{x}) 
    = -\mathcal{P}(\mathbf{x})
    \end{align}
    The newly introduced field variable satisfies, in weak or distributional sense, the following boundary value problem:
    \begin{subequations}
        \begin{alignat}{2}
            -&\mathrm{div}\left[ \frac{1}{\widetilde{\mu}_{0}(\mathbf{x})} 
            \,\mathbf{K}(\mathbf{x}) \, 
            \text{grad}\big[\mathcal{P}^{\#}(\mathbf{x})\big]\right] = 0 
            &&\quad \mathrm{in} \; \Omega \\ 
            &\mathcal{P}^{\#}(\mathbf{x})
            = \mathcal{P}^{\#}_{\mathrm{p}}(\mathbf{x}) 
            \equiv -\mathcal{P}_{\mathrm{p}}(\mathbf{x}) 
            &&\quad \mathrm{on} \; \Gamma^{p} \\
            -&\frac{1}{\widetilde{\mu}_{0}(\mathbf{x})}\,\mathbf{K}(\mathbf{x})\, \text{grad}\big[\mathcal{P}^{\#}(\mathbf{x})\big] \bullet 
            \widehat{\mathbf{n}}(\mathbf{x}) 
            = v_{n}^{\#}(\mathbf{x}) 
            \equiv -v_{n}(\mathbf{x})
            &&\quad \mathrm{on} \; \Gamma^{v}
     \end{alignat}
    \end{subequations}
    Noting that $v_n^{\#}(\mathbf{x}) \leq 0$ (as $v_n(\mathbf{x}) \geq 0$) and using the minimum principle given by Lemma \ref{Lemma:Hopf_Cole_Minimum_principle_Darcy}, we establish the following inequality: 
     \begin{equation}
        \min_{\mathbf{x} \in \Gamma^{p}} \left[\mathcal{P}^{\#}_\mathrm{p}(\mathbf{x})\right] \leq \mathcal{P}^{\#}(\mathbf{x})
        \quad \forall \mathbf{x} \in \overline\Omega
    \end{equation}
    Invoking that $\mathcal{P}^{\#}_{\mathrm{p}}(\mathbf{x}) = -\mathcal{P}_{\mathrm{p}}(\mathbf{x})$ and $\mathcal{P}^{\#}(\mathbf{x}) = -\mathcal{P}(\mathbf{x})$, we get the required inequality: 
    \begin{equation}
        \mathcal{P}(\mathbf{x}) \leq 
        \max_{\mathbf{x} \in \Gamma^{p}} \left[\mathcal{P}_\mathrm{p}(\mathbf{x})\right] 
        \quad \forall \mathbf{x} \in \overline\Omega
    \end{equation}
\end{proof}

\begin{lemma}[Comparison principle for $\mathcal{P}(\mathbf{x})$]
  \label{Lemma:Hopf_Cole_Comparision_principle_Darcy}
    Let $\big(\mathbf{v}^{(1)}(\mathbf{x}), \mathcal{P}^{(1)}(\mathbf{x})\big)$ and $\left(\mathbf{v}^{(2)}(\mathbf{x}), \mathcal{P}^{(2)}(\mathbf{x})\right)$ be the solutions of Eqs.~\eqref{Eqn:Hopf_Cole_Linear_BoLM}--\eqref{Eqn:Hopf_Cole_Linear_vBC} under the boundary conditions $\left(v_{n}^{(1)}(\mathbf{x}), \mathcal{P}_{\mathrm{p}}^{(1)}(\mathbf{x})\right)$ and $\left(v_{n}^{(2)}(\mathbf{x}), \mathcal{P}_{\mathrm{p}}^{(2)}(\mathbf{x})\right)$, respectively. If 
    \begin{align}
        v^{(1)}_{n}(\mathbf{x}) \geq v^{(2)}_{n}(\mathbf{x}) 
        \quad \forall \mathbf{x} \in \Gamma^{v} 
        \quad \mathrm{and} \quad 
        \mathcal{P}_{\mathrm{p}}^{(2)}(\mathbf{x}) \geq  \mathcal{P}_{\mathrm{p}}^{(1)}(\mathbf{x})
        \quad \forall \mathbf{x} \in \Gamma^{p} 
    \end{align}
    then 
    \begin{align}
        \mathcal{P}^{(2)}(\mathbf{x}) \geq \mathcal{P}^{(1)}(\mathbf{x}) \quad \forall \mathbf{x}\in\overline{\Omega}
    \end{align}    
\end{lemma}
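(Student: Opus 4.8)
The plan is to exploit the linearity of the transformed boundary value problem \eqref{Eqn:Hopf_Cole_Linear_BoLM}--\eqref{Eqn:Hopf_Cole_Linear_vBC} and reduce the comparison principle to the minimum principle already established in Lemma \ref{Lemma:Hopf_Cole_Minimum_principle_Darcy}. First I would introduce the difference fields
\begin{align}
    \mathbf{w}(\mathbf{x}) := \mathbf{v}^{(2)}(\mathbf{x}) - \mathbf{v}^{(1)}(\mathbf{x})
    \quad \text{and} \quad
    \mathcal{Q}(\mathbf{x}) := \mathcal{P}^{(2)}(\mathbf{x}) - \mathcal{P}^{(1)}(\mathbf{x})
\end{align}
Because the coefficient $\widetilde{\mu}_0(\mathbf{x})$ appearing in \eqref{Eqn:Hopf_Cole_Linear_BoLM} does not depend on the solution fields, subtracting the two systems shows that the pair $\big(\mathbf{w}(\mathbf{x}), \mathcal{Q}(\mathbf{x})\big)$ again solves Eqs.~\eqref{Eqn:Hopf_Cole_Linear_BoLM}--\eqref{Eqn:Hopf_Cole_Linear_vBC}, now with the boundary data
\begin{align}
    \mathbf{w}(\mathbf{x}) \bullet \widehat{\mathbf{n}}(\mathbf{x})
    = v_n^{(2)}(\mathbf{x}) - v_n^{(1)}(\mathbf{x}) \leq 0
    \;\; \text{on} \; \Gamma^{v},
    \qquad
    \mathcal{Q}(\mathbf{x}) = \mathcal{P}_{\mathrm{p}}^{(2)}(\mathbf{x}) - \mathcal{P}_{\mathrm{p}}^{(1)}(\mathbf{x}) \geq 0
    \;\; \text{on} \; \Gamma^{p}
\end{align}
the sign conditions being exactly the hypotheses of the lemma.

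Next I would note that $\mathcal{Q}(\mathbf{x}) \in C^1(\Omega) \cap C^0(\overline{\Omega})$, being a difference of two functions with that regularity, and that $\mathcal{Q}(\mathbf{x})$ is a solution of the Galerkin weak form \eqref{Eqn:Hopf_Cole_Galerkin_weak_form_pressure_1} corresponding to the data displayed above — this is immediate from the linearity (indeed bilinearity and linearity) of the two terms in \eqref{Eqn:Hopf_Cole_Galerkin_weak_form_pressure_1}. Since the velocity datum for $\mathcal{Q}(\mathbf{x})$ satisfies the sign requirement $v_n \leq 0$ on $\Gamma^v$ needed in Lemma \ref{Lemma:Hopf_Cole_Minimum_principle_Darcy}, I may apply that lemma to $\mathcal{Q}(\mathbf{x})$ and obtain
\begin{align}
    \min_{\mathbf{x} \in \Gamma^{p}} \big[ \mathcal{P}_{\mathrm{p}}^{(2)}(\mathbf{x}) - \mathcal{P}_{\mathrm{p}}^{(1)}(\mathbf{x}) \big]
    \leq \mathcal{Q}(\mathbf{x}) \quad \forall \mathbf{x} \in \overline{\Omega}
\end{align}
Because the bracketed quantity is non-negative at every point of $\Gamma^{p}$ by hypothesis, its minimum over $\Gamma^p$ is non-negative, and hence $\mathcal{Q}(\mathbf{x}) \geq 0$ on $\overline{\Omega}$, which is precisely the claimed inequality $\mathcal{P}^{(2)}(\mathbf{x}) \geq \mathcal{P}^{(1)}(\mathbf{x})$ for all $\mathbf{x} \in \overline{\Omega}$.

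I do not anticipate a genuine obstacle: the whole argument is driven by the linearity of the transformed equations, which is exactly what the Hopf--Cole reformulation buys us. The only point that deserves a little care is the bookkeeping that the difference field inherits both the stated regularity and the status of a weak solution with the correct (sign-definite) boundary data, so that Lemma \ref{Lemma:Hopf_Cole_Minimum_principle_Darcy} applies verbatim; once that is checked, the minimum principle does all the work. An essentially equivalent alternative would be to apply the maximum principle, Lemma \ref{Lemma:Hopf_Cole_Maximum_principle_Darcy}, to $\mathcal{P}^{(1)}(\mathbf{x}) - \mathcal{P}^{(2)}(\mathbf{x})$ instead.
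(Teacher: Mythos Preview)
Your proposal is correct and follows essentially the same route as the paper: form the difference fields, observe by linearity that they satisfy the same transformed Darcy system with sign-definite boundary data ($v_n^{\star}\le 0$ on $\Gamma^v$, $\mathcal{P}^{\star}_{\mathrm{p}}\ge 0$ on $\Gamma^p$), note the inherited $C^1(\Omega)\cap C^0(\overline{\Omega})$ regularity, and then invoke Lemma~\ref{Lemma:Hopf_Cole_Minimum_principle_Darcy} to conclude. Your additional remarks on the weak-form bookkeeping and the alternative via Lemma~\ref{Lemma:Hopf_Cole_Maximum_principle_Darcy} are fine but not needed beyond what the paper already does.
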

\begin{proof}
    Consider the following differences: 
    \begin{align}
        \label{Eqn:Hopf_Cole_Difference_mathcalP}
        &\mathcal{P}^{\star}(\mathbf{x}) = \mathcal{P}^{(2)}(\mathbf{x}) - \mathcal{P}^{(1)}(\mathbf{x}) \\
        \label{Eqn:Hopf_Cole_Difference_v}
        &\mathbf{v}^{\star}(\mathbf{x}) = \mathbf{v}^{(2)}(\mathbf{x}) - \mathbf{v}^{(2)}(\mathbf{x}) 
    \end{align}
    The above differences satisfy the following boundary value problem, in weak or distributional sense:
    \begin{subequations}
        \begin{alignat}{2}
          \label{Eqn:Hopf_Cole_Linear_BoLM_CP}
          &\widetilde{\mu}_0(\mathbf{x}) \, \mathbf{K}^{-1}(\mathbf{x}) \, 
          \mathbf{v}^{\star}(\mathbf{x}) 
           + \mathrm{grad}\big[\mathcal{P}^{\star}(\mathbf{x})\big] = \mathbf{0} 
          &&\quad \mathrm{in} \; \Omega \\ 
          \label{Eqn:Hopf_Cole_Linear_BoM_CP}
           &\mathrm{div}\big[\mathbf{v}^{\star}(\mathbf{x})\big] = 0 
           &&\quad \mathrm{in} \; \Omega \\ 
           \label{Eqn:Hopf_Cole_Linear_pBC_CP}
           &\mathcal{P}^{\star}(\mathbf{x}) = \mathcal{P}^{\star}_{\mathrm{p}}(\mathbf{x})
           = \mathcal{P}^{(2)}_{\mathrm{p}}(\mathbf{x}) - \mathcal{P}^{(1)}_{\mathrm{p}}(\mathbf{x})
           \geq 0 
           &&\quad \mathrm{on} \; \Gamma^{p} \\ 
           \label{Eqn:Hopf_Cole_Linear_vBC_CP}
           &\mathbf{v}^{\star}(\mathbf{x}) \bullet \widehat{\mathbf{n}}(\mathbf{x}) = v^{\star}_n(\mathbf{x}) 
           = v^{(2)}_n(\mathbf{x}) - v^{(1)}_n(\mathbf{x}) 
           \leq 0 
           &&\quad \mathrm{on} \; \Gamma^{v} 
         \end{alignat}
    \end{subequations}
    
    Because $\mathcal{P}^{(1)}(\mathbf{x})$ and $\mathcal{P}^{(2)}(\mathbf{x})$ belong to $C^{1}(\Omega) \cap C^{0}(\overline{\Omega})$, the difference $\mathcal{P}^{\star}(\mathbf{x})$ too belongs to $C^{1}(\Omega) \cap C^{0}(\overline{\Omega})$. Appealing to the minimum principle given by Lemma \ref{Lemma:Hopf_Cole_Minimum_principle_Darcy},  we arrive at the following:
    \begin{equation}
        \min_{\mathbf{x} \in \Gamma^{p}} \left[\mathcal{P}^{\star}_\mathrm{p}(\mathbf{x})\right] \leq \mathcal{P}^{\star}(\mathbf{x})  \quad \forall \mathbf{x} \in \overline{\Omega}
    \end{equation}
    Since $\mathcal{P}^{\star}_{\mathrm{p}}(\mathbf{x}) \geq 0$ by the virtue of Eq.~\eqref{Eqn:Hopf_Cole_Linear_pBC}, we conclude that 
    \begin{align}
        0 \leq \min_{\mathbf{x} \in \Gamma^{p}} \left[\mathcal{P}^{\star}_\mathrm{p}(\mathbf{x})\right]
    \end{align}
    Combining the preceding two inequalities, we get the desired inequality:
    \begin{align}
        0 \leq \mathcal{P}_{\mathrm{p}}^{\star}(\mathbf{x}) = \mathcal{P}^{(2)}(\mathbf{x}) - \mathcal{P}^{(1)}(\mathbf{x}) \quad \forall\mathbf{x}\in \overline{\Omega}
    \end{align}
\end{proof}

\begin{lemma}[Uniqueness of solutions for $\mathcal{P}(\mathbf{x})$]
   \label{Lemma:Hopf_Cole_Uniqueness_Darcy}
    The solutions to the boundary value problem \eqref{Eqn:Hopf_Cole_Linear_BoLM}--\eqref{Eqn:Hopf_Cole_Linear_vBC} are unique.  
\end{lemma}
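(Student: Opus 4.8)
The plan is to obtain uniqueness as an immediate consequence of the comparison principle (Lemma~\ref{Lemma:Hopf_Cole_Comparision_principle_Darcy}), exactly as uniqueness follows from comparison principles for linear elliptic equations. Suppose $\big(\mathbf{v}^{(1)}(\mathbf{x}), \mathcal{P}^{(1)}(\mathbf{x})\big)$ and $\big(\mathbf{v}^{(2)}(\mathbf{x}), \mathcal{P}^{(2)}(\mathbf{x})\big)$ are two solutions of Eqs.~\eqref{Eqn:Hopf_Cole_Linear_BoLM}--\eqref{Eqn:Hopf_Cole_Linear_vBC} (both in the regularity class $C^1(\Omega) \cap C^0(\overline{\Omega})$) corresponding to the \emph{same} boundary data $\big(v_n(\mathbf{x}), \mathcal{P}_\mathrm{p}(\mathbf{x})\big)$. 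Then $v_n^{(1)}(\mathbf{x}) = v_n^{(2)}(\mathbf{x})$ on $\Gamma^v$ and $\mathcal{P}_\mathrm{p}^{(1)}(\mathbf{x}) = \mathcal{P}_\mathrm{p}^{(2)}(\mathbf{x})$ on $\Gamma^p$, so the hypotheses of Lemma~\ref{Lemma:Hopf_Cole_Comparision_principle_Darcy} are met with the two solutions in either order. Applying the lemma twice yields both $\mathcal{P}^{(2)}(\mathbf{x}) \geq \mathcal{P}^{(1)}(\mathbf{x})$ and $\mathcal{P}^{(1)}(\mathbf{x}) \geq \mathcal{P}^{(2)}(\mathbf{x})$ on $\overline{\Omega}$, hence $\mathcal{P}^{(1)}(\mathbf{x}) = \mathcal{P}^{(2)}(\mathbf{x})$ for all $\mathbf{x} \in \overline{\Omega}$.

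Next I would recover uniqueness of the velocity. With $\mathcal{P}^{(1)} \equiv \mathcal{P}^{(2)}$, subtracting the two copies of the balance of linear momentum~\eqref{Eqn:Hopf_Cole_Linear_BoLM} gives $\widetilde{\mu}_0(\mathbf{x})\, \mathbf{K}^{-1}(\mathbf{x})\,\big(\mathbf{v}^{(1)}(\mathbf{x}) - \mathbf{v}^{(2)}(\mathbf{x})\big) = \mathbf{0}$ in $\Omega$; since $\widetilde{\mu}_0(\mathbf{x}) > 0$ and $\mathbf{K}^{-1}(\mathbf{x})$ is positive definite, hence injective, it follows that $\mathbf{v}^{(1)}(\mathbf{x}) = \mathbf{v}^{(2)}(\mathbf{x})$. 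This shows the pair $(\mathbf{v}, \mathcal{P})$ is unique.

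A self-contained alternative bypasses the comparison principle and argues directly from the weak form~\eqref{Eqn:Hopf_Cole_Galerkin_weak_form_pressure_1}: the difference $\mathcal{P}^{\star}(\mathbf{x}) = \mathcal{P}^{(2)}(\mathbf{x}) - \mathcal{P}^{(1)}(\mathbf{x})$ has zero trace on $\Gamma^p$, so $\mathcal{P}^{\star} \in \mathcal{W}$ is an admissible weighting function, and it satisfies the weak form with $v_n \equiv 0$. Choosing $w = \mathcal{P}^{\star}$ forces the weighted $H^1$ seminorm $\int_{\Omega} \mathrm{grad}\big[\mathcal{P}^{\star}\big] \bullet \tfrac{1}{\widetilde{\mu}_0(\mathbf{x})}\,\mathbf{K}(\mathbf{x})\,\mathrm{grad}\big[\mathcal{P}^{\star}\big]\,\mathrm{d}\Omega$ to vanish, whence $\mathcal{P}^{\star}$ is constant on $\overline{\Omega}$, and its zero boundary trace on $\Gamma^p$ pins that constant to zero---the same reasoning used in the proof of Lemma~\ref{Lemma:Hopf_Cole_Minimum_principle_Darcy}. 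The velocity conclusion then follows as above.

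The only delicate point---the main obstacle---is the purely Neumann case $\Gamma^p = \emptyset$ (equivalently $\Gamma^v = \partial\Omega$). There the pressure-boundary hypotheses of the comparison principle are vacuous and the energy argument only delivers $\mathcal{P}^{\star} = \mathrm{const}$, so $\mathcal{P}(\mathbf{x})$ is unique merely up to an additive constant (and, through the transformation~\eqref{Eqn:Hopf_Cole_Functional_form_of_original_p}, a corresponding shift in $p(\mathbf{x})$), while $\mathbf{v}(\mathbf{x})$ is still unique. I would therefore state the lemma under the standing assumption $\Gamma^p \neq \emptyset$, or else explicitly record this caveat alongside the compatibility condition~\eqref{Eqn:Hopf_Cole_Compatibility_equation} that is anyway required for existence in that degenerate configuration.
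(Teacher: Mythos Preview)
Your proof is correct and essentially mirrors the paper's: the paper forms the difference $\mathcal{P}^{\star}=\mathcal{P}^{(2)}-\mathcal{P}^{(1)}$, notes it satisfies the same boundary value problem with homogeneous data, applies Lemmas~\ref{Lemma:Hopf_Cole_Minimum_principle_Darcy} and~\ref{Lemma:Hopf_Cole_Maximum_principle_Darcy} to sandwich $\mathcal{P}^{\star}$ between $0$ and $0$, and then recovers $\mathbf{v}^{(1)}=\mathbf{v}^{(2)}$ from Eq.~\eqref{Eqn:Hopf_Cole_Linear_BoLM} exactly as you do. Your invocation of Lemma~\ref{Lemma:Hopf_Cole_Comparision_principle_Darcy} twice is the same argument one abstraction level up (that lemma is itself the minimum principle applied to a difference); your energy alternative and the $\Gamma^{p}=\emptyset$ caveat are valid additions the paper does not record.
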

\begin{proof} On the contrary, assume that there exist two solutions, labeled:
\[
\big(\mathcal{P}^{(1)}(\mathbf{x}),\mathbf{v}^{(1)}(\mathbf{x})\big) 
\quad \mathrm{and} \quad \big(\mathcal{P}^{(2)}(\mathbf{x}),\mathbf{v}^{(2)}(\mathbf{x})\big) 
\]
Just as in Lemma \ref{Lemma:Hopf_Cole_Comparision_principle_Darcy}, we again consider the differences defined in Eqs.~\eqref{Eqn:Hopf_Cole_Difference_mathcalP} and \eqref{Eqn:Hopf_Cole_Difference_v}. These differences satisfy the following boundary value problem (should be interpreted in distributional sense):
    \begin{subequations}
        \begin{alignat}{2}
          \label{Eqn:Hopf_Cole_Uniqueness_BoLM}
          &\widetilde{\mu}_0(\mathbf{x}) \, \mathbf{K}^{-1}(\mathbf{x}) \, 
          \mathbf{v}^{\star}(\mathbf{x}) 
           + \mathrm{grad}\big[\mathcal{P}^{\star}(\mathbf{x})\big] = \mathbf{0} 
          &&\quad \mathrm{in} \; \Omega \\ 
          \label{Eqn:Hopf_Cole_Uniqueness_BoM}
           &\mathrm{div}\big[\mathbf{v}^{\star}(\mathbf{x})\big] = 0 
           &&\quad \mathrm{in} \; \Omega \\ 
           \label{Eqn:Hopf_Cole_Uniqueness_pBC}
           &\mathcal{P}^{\star}(\mathbf{x}) = \mathcal{P}^{\star}_{\mathrm{p}}(\mathbf{x})
           = 0 
           &&\quad \mathrm{on} \; \Gamma^{p} \\ 
           \label{Eqn:Hopf_Cole_Uniqueness_vBC}
           &\mathbf{v}^{\star}(\mathbf{x}) \bullet \widehat{\mathbf{n}}(\mathbf{x}) 
           = v_n(\mathbf{x}) = 0 
           &&\quad \mathrm{on} \; \Gamma^{v} 
         \end{alignat}
    \end{subequations}
    Applying the minimum principle and the maximum principle, we establish the following: 
    \begin{align}
      \label{Eqn:Hopf_Cole_pressure_diff_max_min}
        0 = \min_{\mathbf{x} \in \Gamma^{p}} \big[\mathcal{P}^{\star}_{\mathrm{p}}(\mathbf{x})\big] \leq \mathcal{P}^{\star}(\mathbf{x}) \leq 
        \max_{\mathbf{x} \in \Gamma^{p}} \big[\mathcal{P}^{\star}_{\mathrm{p}}(\mathbf{x})\big] = 0 
    \end{align}
    which further implies the coincidence of the pressure fields: 
    \begin{align}
        0 = \mathcal{P}^{\star}(\mathbf{x}) = 
        \mathcal{P}^{(2)}(\mathbf{x}) - 
        \mathcal{P}^{(1)}(\mathbf{x})
    \end{align}
    Then Eq.~\eqref{Eqn:Hopf_Cole_Uniqueness_BoLM} entails that 
    \begin{align}
        \widetilde{\mu}_0(\mathbf{x}) \, \mathbf{K}^{-1}(\mathbf{x}) \, \big(\mathbf{v}^{(1)}(\mathbf{x}) - \mathbf{v}^{(2)}(\mathbf{x}) \big) = \mathbf{0} 
    \end{align}
    Since $\widetilde{\mu}_0(\mathbf{x}) > 0$ and $\mathbf{K}(\mathbf{x})$ is invertible, the preceding equation yields: 
    \begin{align}
        \mathbf{v}^{(1)}(\mathbf{x}) = \mathbf{v}^{(2)}(\mathbf{x}) 
    \end{align}
    Hence, both solutions coincide. 
    \end{proof}

\subsection{Mathematical properties of $\widetilde{p}(\mathbf{x})$} Recall that $\widetilde{p}(\mathbf{x})$ is the solution under the Barus model, considering that the body force is conservative. Leveraging the key features of the transformation given by Eq.~\eqref{Eqn:Hopf_Cole_Functional_form_of_mathcal_P}---particularly its invertibility and monotonicity---we extend the properties satisfied by $\mathcal{P}(\mathbf{x})$, as established in the preceding section, to $\widetilde{p}(\mathbf{x})$. We start with the minimum principle for $\widetilde{p}(\mathbf{x})$.

\begin{theorem}[Minimum principle under the Barus model]
    \label{Thm:Hopf_Cole_Minimum_principle_Barus}
    Let $\widetilde{p}(\mathbf{x}) \in C^{1}(\Omega) \cap C^{0} (\overline{\Omega})$. If $v_{n}(\mathbf{x})\, \leq \, 0 $ on $\Gamma^{v}$ then 
    \begin{align}
        \min_{\mathbf{x} \in \Gamma^p}
        \big[\widetilde{p}_{\mathrm{p}}(\mathbf{x})\big]
        \leq \widetilde{p}(\mathbf{x}) 
        \quad \forall \mathbf{x}\in \overline{\Omega}
\end{align}
\end{theorem}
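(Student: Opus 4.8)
The plan is to transfer the minimum principle already established for the transformed variable $\mathcal{P}(\mathbf{x})$ (Lemma~\ref{Lemma:Hopf_Cole_Minimum_principle_Darcy}) back to $\widetilde{p}(\mathbf{x})$, using only the qualitative features of the map $f$ recorded at the end of \S\ref{Sec:S3_Hopf_Cole_Transformation}: $f$ is smooth, strictly (monotonically) increasing, and a bijection from $(-\infty,0)$ onto $\mathbb{R}$, with smooth inverse $h(\mathbf{x},\cdot)$. Nothing beyond the already-proven Darcy minimum principle and elementary monotonicity is needed.

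First I would note that since $\widetilde{p}(\mathbf{x})$ is real-valued, the companion variable $\mathcal{P}(\mathbf{x}) = h\big(\mathbf{x},p(\mathbf{x})\big) \equiv f^{-1}\big(\widetilde{p}(\mathbf{x})\big)$ is well-defined on $\overline{\Omega}$ and takes values in $(-\infty,0)$ automatically. Because $f^{-1}$ is $C^{\infty}$ with non-vanishing derivative (a consequence of bijectivity and strict monotonicity) and $\widetilde{\mu}_0(\mathbf{x})$, $\xi(\mathbf{x})$ are smooth, composition does not degrade regularity, so $\widetilde{p}\in C^{1}(\Omega)\cap C^{0}(\overline{\Omega})$ forces $\mathcal{P}\in C^{1}(\Omega)\cap C^{0}(\overline{\Omega})$. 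By the derivation of Eqs.~\eqref{Eqn:Hopf_Cole_Linear_BoLM}--\eqref{Eqn:Hopf_Cole_Linear_vBC}, this $\mathcal{P}$ solves the transformed Galerkin problem~\eqref{Eqn:Hopf_Cole_Galerkin_weak_form_pressure_1} with prescribed data $\mathcal{P}_{\mathrm{p}}(\mathbf{x}) = h\big(\mathbf{x},p_{\mathrm{p}}(\mathbf{x})\big)$ on $\Gamma^{p}$ and the same $v_n(\mathbf{x})$ on $\Gamma^{v}$, since the velocity field—and hence its normal trace—is unchanged by the transformation.

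Next, the hypothesis $v_n(\mathbf{x})\le 0$ on $\Gamma^{v}$ is exactly the assumption needed for Lemma~\ref{Lemma:Hopf_Cole_Minimum_principle_Darcy}, which gives $\min_{\mathbf{x}\in\Gamma^{p}}[\mathcal{P}_{\mathrm{p}}(\mathbf{x})] \le \mathcal{P}(\mathbf{x})$ for all $\mathbf{x}\in\overline{\Omega}$. Applying the strictly increasing $f$ to both sides preserves the inequality: $f\big(\min_{\mathbf{x}\in\Gamma^{p}}\mathcal{P}_{\mathrm{p}}(\mathbf{x})\big) \le f\big(\mathcal{P}(\mathbf{x})\big) = \widetilde{p}(\mathbf{x})$. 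Monotonicity of $f$ also commutes with the minimum, so $f\big(\min_{\Gamma^{p}}\mathcal{P}_{\mathrm{p}}\big) = \min_{\Gamma^{p}} f(\mathcal{P}_{\mathrm{p}}) = \min_{\Gamma^{p}} \widetilde{p}_{\mathrm{p}}$, where the last equality uses $\widetilde{p}_{\mathrm{p}}(\mathbf{x}) = f\big(\mathcal{P}_{\mathrm{p}}(\mathbf{x})\big)$, itself immediate from Eqs.~\eqref{Eqn:Hopf_Cole_mathcalP_p_definition} and~\eqref{Eqn:Hopf_Cole_Transformed_variable}. Chaining these yields $\min_{\mathbf{x}\in\Gamma^{p}}[\widetilde{p}_{\mathrm{p}}(\mathbf{x})] \le \widetilde{p}(\mathbf{x})$ for all $\mathbf{x}\in\overline{\Omega}$, which is the claim.

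The only genuinely delicate point is the regularity and compatibility bookkeeping in the second paragraph: one must verify that an admissible $\widetilde{p}$ really does produce a $\mathcal{P}$ that lies in the domain of $f$ (automatic from realness of $\widetilde{p}$) and that is itself admissible for Lemma~\ref{Lemma:Hopf_Cole_Minimum_principle_Darcy}—i.e., that the composition with $h$ causes no loss of differentiability and that $\mathcal{P}$ genuinely satisfies the weak form with the stated transformed boundary data. Once this is in place, the remainder is a two-line monotonicity argument; no nonlinear PDE theory is invoked.
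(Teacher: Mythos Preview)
Your proposal is correct and follows essentially the same route as the paper: push the regularity of $\widetilde{p}$ through the smooth, strictly increasing inverse transformation to obtain $\mathcal{P}\in C^{1}(\Omega)\cap C^{0}(\overline{\Omega})$, invoke Lemma~\ref{Lemma:Hopf_Cole_Minimum_principle_Darcy} under the hypothesis $v_n\le 0$, and then pull the resulting inequality back to $\widetilde{p}$ via the monotone bijection $f$. Your explicit observation that strict monotonicity of $f$ lets it commute with the minimum is a nice touch that the paper leaves implicit, but otherwise the arguments coincide.
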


\begin{proof}
    First, we note that the transformation that maps $\widetilde{p}(\mathbf{x})$ to $\mathcal{P}(\mathbf{x})$, which takes the following form: 
    \begin{align}
        \label{Eqn:Hopf_Cole_Functional_form_of_p}
        \mathcal{P}(\mathbf{x}) = -\frac{p_{0}}{\beta}\, 
        \text{exp}\left[ -\beta \left(\frac{\widetilde{p}(\mathbf{x})}{p_0} - 1 \right) \right]
    \end{align}
    is continuous. Consequently, $\widetilde{p}(\mathbf{x}) \in C^{0}(\overline{\Omega})$ implies that $\mathcal{P}(\mathbf{x}) \in C^{0}(\overline{\Omega})$, as a composite function of two continuous functions is continuous. 
    
    Second, since $\widetilde{p}(\mathbf{x}) \in C^{1}(\Omega)$, it is differentiable. Moreover, the above transformation \eqref{Eqn:Hopf_Cole_Functional_form_of_p} is differentiable. Appealing to the chain rule: composition of two differentiable functions is differentiable \citep{bartle2000introduction}, we conclude that $\mathcal{P}(\mathbf{x}) \in C^1(\Omega)$. Therefore, we establish that 
    \begin{align}
        \mathcal{P}(\mathbf{x}) \in C^{1}(\Omega) \cap C^{0}(\overline{\Omega})
    \end{align}

    The domain over which the exponential mapping is defined is the whole of $\mathbb{R}$. So, any value of $\widetilde{p}_{\mathrm{p}}(\mathbf{x})$ will render a well-defined $\mathcal{P}_{\mathrm{p}}(\mathbf{x})$ on $\Gamma^{p}$, via the above transformation \eqref{Eqn:Hopf_Cole_Functional_form_of_p}. 
    
    Given that $\mathcal{P}(\mathbf{x})$ satisfies all the requirements of Lemma~\ref{Lemma:Hopf_Cole_Minimum_principle_Darcy}, and noting that $v_n(\mathbf{x}) \leq 0$ on $\Gamma^{v}$, we arrive at the following inequality:
    \begin{equation}
         \min_{\mathbf{x} \in \Gamma^{p}} \left[\mathcal{P}_\mathrm{p}(\mathbf{x})\right] \leq \mathcal{P}(\mathbf{x}) 
         \quad \forall \mathbf{x} \in \overline\Omega
    \end{equation}
Since the transformation Eq.~\eqref{Eqn:Hopf_Cole_Functional_form_of_mathcal_P} is invertible---assuming that $\mathcal{P}(\mathbf{x}) \in (-\infty,0)$; otherwise solution does not exist; see \S\ref{Sec:Cole_Hopf_Ill_posedness}---and monotonically increasing, we establish the desired result:
\begin{align}
    \min_{\mathbf{x} \in \Gamma^p}
    \left[\widetilde{p}_{\mathrm{p}}(\mathbf{x})\right]
    \;\leq\;
    \widetilde{p}(\mathbf{x})
    \quad
    \forall\mathbf{x}\in \overline{\Omega}
\end{align}
\end{proof}

\begin{theorem}[Maximum principle under the Barus model]
    \label{Thm:Hopf_Cole_Maximum_principle_Barus}
    Let $\widetilde{p}(\mathbf{x}) \in C^{1}(\Omega) \cap C^{0} (\overline{\Omega})$. If $v_{n}(\mathbf{x})\, \geq \, 0 $ on $\Gamma^{v}$ then 
    \begin{align} 
        \max_{\mathbf{x} \in \Gamma^p}
        \left[\widetilde{p}_{\mathrm{p}}(\mathbf{x})\right]
        \geq \widetilde{p}(\mathbf{x}) 
        \quad \forall \mathbf{x}\in \overline{\Omega}
    \end{align}
\end{theorem}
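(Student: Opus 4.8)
The plan is to mirror the strategy already used in the proof of Theorem~\ref{Thm:Hopf_Cole_Minimum_principle_Barus}, but starting from the maximum principle for $\mathcal{P}(\mathbf{x})$ (Lemma~\ref{Lemma:Hopf_Cole_Maximum_principle_Darcy}) instead of the minimum principle. First I would note that $\widetilde{p}(\mathbf{x}) \in C^{1}(\Omega) \cap C^{0}(\overline{\Omega})$, combined with the smoothness and differentiability of the transformation Eq.~\eqref{Eqn:Hopf_Cole_Functional_form_of_p} (which sends $\widetilde{p}(\mathbf{x})$ to $\mathcal{P}(\mathbf{x})$), forces $\mathcal{P}(\mathbf{x}) \in C^{1}(\Omega) \cap C^{0}(\overline{\Omega})$ by the usual composition-of-continuous/differentiable-functions arguments. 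Since the exponential map is defined on all of $\mathbb{R}$, the prescribed boundary data $\widetilde{p}_{\mathrm{p}}(\mathbf{x})$ on $\Gamma^{p}$ yields a well-defined $\mathcal{P}_{\mathrm{p}}(\mathbf{x})$, so $\mathcal{P}(\mathbf{x})$ satisfies all hypotheses of Lemma~\ref{Lemma:Hopf_Cole_Maximum_principle_Darcy}.

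Next, since we are given $v_{n}(\mathbf{x}) \geq 0$ on $\Gamma^{v}$, Lemma~\ref{Lemma:Hopf_Cole_Maximum_principle_Darcy} applies directly and gives
\begin{align}
    \mathcal{P}(\mathbf{x}) \leq \max_{\mathbf{x} \in \Gamma^{p}} \big[\mathcal{P}_{\mathrm{p}}(\mathbf{x})\big] \quad \forall \mathbf{x} \in \overline{\Omega}.
\end{align}
The final step is to push this inequality back through the forward transformation Eq.~\eqref{Eqn:Hopf_Cole_Functional_form_of_mathcal_P}. As documented in the subsection on characteristics of the transformation, the map $\mathcal{P} \mapsto \widetilde{p}$ is continuous, monotonically increasing, and bijective on its domain $(-\infty,0)$ — provided $\mathcal{P}(\mathbf{x})$ actually lands in $(-\infty,0)$, which is exactly the existence caveat (otherwise no real pressure exists; see \S\ref{Sec:Cole_Hopf_Ill_posedness}). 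Applying this monotone map to the inequality above preserves the direction, so
\begin{align}
    \widetilde{p}(\mathbf{x}) = f\big(\mathcal{P}(\mathbf{x})\big) \leq f\Big(\max_{\mathbf{x} \in \Gamma^{p}} \mathcal{P}_{\mathrm{p}}(\mathbf{x})\Big) = \max_{\mathbf{x} \in \Gamma^{p}} \big[\widetilde{p}_{\mathrm{p}}(\mathbf{x})\big],
\end{align}
where the last equality uses that monotone maps commute with $\max$ over the finite-in-effect index set $\Gamma^{p}$ together with Eq.~\eqref{Eqn:Hopf_Cole_mathcalP_p_definition}. This is the claimed bound.

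An alternative, even shorter route — which I would mention as a remark rather than the main proof — is to apply Theorem~\ref{Thm:Hopf_Cole_Minimum_principle_Barus} to $\widetilde{p}^{\#}(\mathbf{x}) := -\widetilde{p}(\mathbf{x})$, exactly as Lemma~\ref{Lemma:Hopf_Cole_Maximum_principle_Darcy} was derived from Lemma~\ref{Lemma:Hopf_Cole_Minimum_principle_Darcy}; however, this is slightly awkward here because the Barus viscosity is not symmetric under $\widetilde{p} \mapsto -\widetilde{p}$, so negating $\widetilde{p}$ does not obviously produce another Barus-model solution. For that reason the cleaner argument is the direct one above, routed through the already-established maximum principle for the transformed (Darcy) variable. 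The only genuine subtlety — and the point I would state carefully — is the implicit hypothesis that a solution $\widetilde{p}(\mathbf{x})$ exists at all, i.e.\ that the corresponding $\mathcal{P}(\mathbf{x})$ stays strictly negative throughout $\overline{\Omega}$; everything else is a routine transcription of the monotonicity argument. No new machinery is needed.
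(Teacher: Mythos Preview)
Your proposal is correct and follows essentially the same route as the paper: establish that $\mathcal{P}(\mathbf{x})$ inherits the regularity of $\widetilde{p}(\mathbf{x})$ via composition, apply Lemma~\ref{Lemma:Hopf_Cole_Maximum_principle_Darcy} under the hypothesis $v_n(\mathbf{x}) \geq 0$, and then pull the resulting bound back through the monotone bijection Eq.~\eqref{Eqn:Hopf_Cole_Functional_form_of_mathcal_P}. The paper's proof is terser---it simply points back to the minimum-principle argument and invokes bijectivity and monotonicity---but the content is identical; your added remarks on the existence caveat and on why the sign-flip $\widetilde{p}\mapsto -\widetilde{p}$ is not directly available at the Barus level are accurate and worth keeping.
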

\begin{proof}
     Many of the steps follow the preceding theorem. The transformed field variable $\mathcal{P}(\mathbf{x})$, defined via Eq.~\eqref{Eqn:Hopf_Cole_Functional_form_of_p}, satisfy the requirements of Lemma \ref{Lemma:Hopf_Cole_Maximum_principle_Darcy}. Noting that $v_n(\mathbf{x}) \geq 0$ on $\Gamma^{v}$, the maximum principle for $\mathcal{P}(\mathbf{x})$ (i.e., Lemma \ref{Lemma:Hopf_Cole_Maximum_principle_Darcy}) implies:
    \begin{align}
        \mathcal{P}(\mathbf{x}) \leq 
        \max_{\mathbf{x} \in \Gamma^{p}} \mathcal{P}_\mathrm{p}(\mathbf{x}),
        \quad \forall \mathbf{x} \in \overline{\Omega}.
    \end{align}
    Just like the preceding theorem, the bijectivity and monotonicity of the transformation---given by Eq.~\eqref{Eqn:Hopf_Cole_Functional_form_of_p}---provide the desired result:
    \begin{align} 
        \max_{\mathbf{x} \in \Gamma^p}
        \left[\widetilde{p}_{\mathrm{p}}(\mathbf{x})\right]
        \geq \widetilde{p}(\mathbf{x}) 
        \quad \forall \mathbf{x}\in \overline{\Omega}
    \end{align}    
\end{proof}

\begin{theorem}[Comparison principle under the Barus model]
    \label{Thm:Hopf_Cole_comparison_principle_Barus}
    Let $\left( \mathbf{v}^{(1)}(\mathbf{x}), \widetilde{p}^{(1)}(\mathbf{x})\right)$ and $\left( \mathbf{v}^{(2)}(\mathbf{x}), \widetilde{p}^{(2)}(\mathbf{x})\right)$ be two solutions of the boundary value problem \eqref{Eqn:Hopf_Cole_BoLM_Conservative}--\eqref{Eqn:Hopf_Cole_vBC_Conservative} with the prescribed conditions $\left( v_{n}^{(1)}(\mathbf{x}), \widetilde{p}_{\mathrm{p}}^{(1)}(\mathbf{x})\right)$ and $\left( v_{n}^{(2)}(\mathbf{x}), \widetilde{p}_{\mathrm{p}}^{(2)}(\mathbf{x})\right)$, respectively. If 
    \begin{subequations}
            \begin{alignat}{2}
                \label{Eqn:Hopf_Cole_conditions_for_comparison_principle_velocity}
                &v^{(1)}_{n}(\mathbf{x}) \geq v^{(2)}_{n}(\mathbf{x}) \\
                \label{Eqn:Hopf_Cole_conditions_for_comparison_principle_pressure}
                &\widetilde{p}_{\mathrm{p}}^{(2)}(\mathbf{x}) \geq  \widetilde{p}_{\mathrm{p}}^{(1)}(\mathbf{x})
            \end{alignat}
    \end{subequations}
    then 
    \begin{align}
        \widetilde{p}^{(2)}(\mathbf{x}) \geq \widetilde{p}^{(1)}(\mathbf{x}) \quad \forall \mathbf{x}\in\overline{\Omega}
    \end{align}
\end{theorem}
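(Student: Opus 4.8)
The plan is to do for the comparison principle exactly what Theorems~\ref{Thm:Hopf_Cole_Minimum_principle_Barus} and \ref{Thm:Hopf_Cole_Maximum_principle_Barus} did for the one-sided principles: transport the two Barus-model solutions to the transformed variable, invoke the already-proven comparison principle for $\mathcal{P}(\mathbf{x})$ (Lemma~\ref{Lemma:Hopf_Cole_Comparision_principle_Darcy}), and transport the resulting inequality back. First I would set, for $i \in \{1,2\}$, $\mathcal{P}^{(i)}(\mathbf{x})$ to be the image of $\widetilde{p}^{(i)}(\mathbf{x})$ under the transformation \eqref{Eqn:Hopf_Cole_Functional_form_of_p}, while keeping the velocities $\mathbf{v}^{(i)}(\mathbf{x})$ unchanged. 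Reusing the regularity argument from the proof of Theorem~\ref{Thm:Hopf_Cole_Minimum_principle_Barus} --- Eq.~\eqref{Eqn:Hopf_Cole_Functional_form_of_p} is continuous and differentiable, and compositions preserve both properties --- gives $\mathcal{P}^{(i)}(\mathbf{x}) \in C^{1}(\Omega) \cap C^{0}(\overline{\Omega})$. Comparing Eqs.~\eqref{Eqn:Hopf_Cole_Linear_BoLM}--\eqref{Eqn:Hopf_Cole_Linear_vBC} with the conservative-form problem \eqref{Eqn:Hopf_Cole_BoLM_Conservative}--\eqref{Eqn:Hopf_Cole_vBC_Conservative}, and recalling that by Eq.~\eqref{Eqn:Hopf_Cole_Arbitrary_constant_A} together with the choice \eqref{Eqn:Hopf_Cole_selection_of_A_and_B} the prefactor in Eq.~\eqref{Eqn:Hopf_Cole_v_in_terms_of_mathcalP} reduces to one, the velocity field is common to both formulations; hence each pair $(\mathbf{v}^{(i)}(\mathbf{x}), \mathcal{P}^{(i)}(\mathbf{x}))$ solves the linear boundary value problem \eqref{Eqn:Hopf_Cole_Linear_BoLM}--\eqref{Eqn:Hopf_Cole_Linear_vBC} with data $v_n^{(i)}(\mathbf{x})$ on $\Gamma^{v}$ and $\mathcal{P}_{\mathrm{p}}^{(i)}(\mathbf{x})$, the image of $\widetilde{p}_{\mathrm{p}}^{(i)}(\mathbf{x})$ under \eqref{Eqn:Hopf_Cole_Functional_form_of_p}, on $\Gamma^{p}$.

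Next I would translate the two hypotheses into those of Lemma~\ref{Lemma:Hopf_Cole_Comparision_principle_Darcy}. Condition \eqref{Eqn:Hopf_Cole_conditions_for_comparison_principle_velocity} is already the required inequality $v_n^{(1)}(\mathbf{x}) \geq v_n^{(2)}(\mathbf{x})$ on $\Gamma^{v}$. For the pressure condition I would use the monotonicity recorded among the characteristics of the transformation: the map $\mathcal{P} \mapsto \widetilde{p}$ of Eq.~\eqref{Eqn:Hopf_Cole_Functional_form_of_mathcal_P} is strictly increasing on $(-\infty,0)$, so its inverse \eqref{Eqn:Hopf_Cole_h_transformation} is strictly increasing as well, whence \eqref{Eqn:Hopf_Cole_conditions_for_comparison_principle_pressure} (i.e., $\widetilde{p}_{\mathrm{p}}^{(2)}(\mathbf{x}) \geq \widetilde{p}_{\mathrm{p}}^{(1)}(\mathbf{x})$ on $\Gamma^{p}$) carries over to $\mathcal{P}_{\mathrm{p}}^{(2)}(\mathbf{x}) \geq \mathcal{P}_{\mathrm{p}}^{(1)}(\mathbf{x})$ on $\Gamma^{p}$. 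Lemma~\ref{Lemma:Hopf_Cole_Comparision_principle_Darcy} then yields $\mathcal{P}^{(2)}(\mathbf{x}) \geq \mathcal{P}^{(1)}(\mathbf{x})$ for all $\mathbf{x}\in\overline{\Omega}$.

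Finally, applying the strictly increasing map $f$ of Eq.~\eqref{Eqn:Hopf_Cole_Functional_form_of_mathcal_P} to both sides gives $\widetilde{p}^{(2)}(\mathbf{x}) = f\big(\mathcal{P}^{(2)}(\mathbf{x})\big) \geq f\big(\mathcal{P}^{(1)}(\mathbf{x})\big) = \widetilde{p}^{(1)}(\mathbf{x})$ on $\overline{\Omega}$, which is the assertion. I do not expect a genuine analytic obstacle; the only points that need care are (i) confirming that the velocity field is truly unaffected by the Hopf-Cole transformation, so that the velocity hypothesis of Lemma~\ref{Lemma:Hopf_Cole_Comparision_principle_Darcy} transfers verbatim, and (ii) observing that $\mathcal{P}^{(1)}(\mathbf{x})$ and $\mathcal{P}^{(2)}(\mathbf{x})$ automatically lie in the admissible range $(-\infty,0)$ --- because the inverse transformation \eqref{Eqn:Hopf_Cole_h_transformation} is negative-valued for every real pressure --- so that invertibility and monotonicity may be invoked without further qualification.
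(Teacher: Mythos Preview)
Your proposal is correct and follows essentially the same route as the paper: transport the two solutions to the transformed variable via the monotone Hopf--Cole map, invoke Lemma~\ref{Lemma:Hopf_Cole_Comparision_principle_Darcy} (noting that the velocity data are unaffected and the pressure-boundary ordering is preserved by monotonicity), and then transport the resulting inequality back using the bijectivity and monotonicity of $f$. Your write-up is in fact slightly more explicit than the paper's on the regularity step and on the admissible range $(-\infty,0)$, but the underlying argument is the same.
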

\begin{proof}
    The proof will use Lemma~\ref{Lemma:Hopf_Cole_Comparision_principle_Darcy}, which establishes a comparison principle for the transformed variable: $\mathcal{P}(\mathbf{x})$. Since the transformation under the Hopf-Cole approach (i.e., Eq.~\eqref{Eqn:Hopf_Cole_Functional_form_of_p}) is monotonically increasing, we have: 
    \begin{align}
        \widetilde{p}_{\mathrm{p}}^{(2)}(\mathbf{x}) 
        \geq  \widetilde{p}_{\mathrm{p}}^{(1)}(\mathbf{x}) 
        \implies 
        \mathcal{P}_{\mathrm{p}}^{(2)}(\mathbf{x}) 
        \geq  \mathcal{P}_{\mathrm{p}}^{(1)}(\mathbf{x})
    \end{align}
    The Hopf-Cole transformation does not alter the velocity field, and hence, Eq.~\eqref{Eqn:Hopf_Cole_conditions_for_comparison_principle_velocity} remains unaltered. Now, invoking Lemma~\ref{Lemma:Hopf_Cole_Comparision_principle_Darcy} on $\mathcal{P}(\mathbf{x})$, we establish: 
    \begin{align}
        \mathcal{P}^{(2)}(\mathbf{x}) \geq \mathcal{P}^{(1)}(\mathbf{x}) \quad \forall \mathbf{x}\in\overline{\Omega}
    \end{align} 
    The transformation’s bijectivity and monotonicity-preserving properties yield the desired ordering for $\widetilde{p}(\mathbf{x})$:
    \begin{align}
        \widetilde{p}^{(2)}(\mathbf{x}) \geq 
        \widetilde{p}^{(1)}(\mathbf{x}) \quad 
        \forall \mathbf{x}\in\overline{\Omega}
    \end{align}    
\end{proof}

\begin{theorem}[Uniqueness of solutions under the Barus model]
  \label{Thm:Hopf_Cole_Uniqueness_of_solutions_under_Barus_model}
   The solutions to the boundary value problem \eqref{Eqn:Hopf_Cole_BoLM_Conservative}--\eqref{Eqn:Hopf_Cole_vBC_Conservative} are unique.  
\end{theorem}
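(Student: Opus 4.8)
The plan is to reduce this statement to the uniqueness result already proven for the transformed linear problem, Lemma~\ref{Lemma:Hopf_Cole_Uniqueness_Darcy}, by exploiting the bijectivity of the Hopf-Cole transformation. Concretely, suppose $\bigl(\mathbf{v}^{(1)}(\mathbf{x}),\widetilde{p}^{(1)}(\mathbf{x})\bigr)$ and $\bigl(\mathbf{v}^{(2)}(\mathbf{x}),\widetilde{p}^{(2)}(\mathbf{x})\bigr)$ both solve the boundary value problem \eqref{Eqn:Hopf_Cole_BoLM_Conservative}--\eqref{Eqn:Hopf_Cole_vBC_Conservative} under the same prescribed data $\bigl(v_n(\mathbf{x}),\widetilde{p}_{\mathrm{p}}(\mathbf{x})\bigr)$. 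Applying the inverse transformation \eqref{Eqn:Hopf_Cole_h_transformation} pointwise yields fields $\mathcal{P}^{(i)}(\mathbf{x})$, $i=1,2$, and the manipulations of \S\ref{Sec:S3_Hopf_Cole_Transformation} show that each $\mathcal{P}^{(i)}$ solves the linear (Darcy-type) boundary value problem \eqref{Eqn:Hopf_Cole_Linear_BoLM}--\eqref{Eqn:Hopf_Cole_Linear_vBC}, with the same transformed pressure data $\mathcal{P}_{\mathrm{p}}(\mathbf{x}) = h\bigl(\mathbf{x},p_{\mathrm{p}}(\mathbf{x})\bigr)$ and the same velocity data $v_n(\mathbf{x})$ (which the transformation leaves unchanged).

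Next I would transfer regularity exactly as in the proof of Theorem~\ref{Thm:Hopf_Cole_Minimum_principle_Barus}: since the map $\widetilde{p}\mapsto\mathcal{P}$ given by Eq.~\eqref{Eqn:Hopf_Cole_Functional_form_of_p} is smooth and $\widetilde{p}^{(i)}\in C^{1}(\Omega)\cap C^{0}(\overline{\Omega})$, composition gives $\mathcal{P}^{(i)}\in C^{1}(\Omega)\cap C^{0}(\overline{\Omega})$. Hence both $\mathcal{P}^{(1)}$ and $\mathcal{P}^{(2)}$ satisfy the hypotheses of Lemma~\ref{Lemma:Hopf_Cole_Uniqueness_Darcy}, which then forces $\mathcal{P}^{(1)}(\mathbf{x}) = \mathcal{P}^{(2)}(\mathbf{x})$ and $\mathbf{v}^{(1)}(\mathbf{x}) = \mathbf{v}^{(2)}(\mathbf{x})$ on $\overline{\Omega}$. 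Finally, because the forward map \eqref{Eqn:Hopf_Cole_Functional_form_of_mathcal_P} is a single-valued (indeed bijective, monotone) function, $\widetilde{p}^{(i)}(\mathbf{x}) = f\bigl(\mathcal{P}^{(i)}(\mathbf{x})\bigr)$ gives $\widetilde{p}^{(1)}(\mathbf{x}) = \widetilde{p}^{(2)}(\mathbf{x})$, so the two solutions coincide.

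An equally short alternative avoids the inverse transformation and instead invokes the comparison principle under the Barus model, Theorem~\ref{Thm:Hopf_Cole_comparison_principle_Barus}, twice with the two solutions interchanged: identical boundary data yields both $\widetilde{p}^{(1)}(\mathbf{x})\le\widetilde{p}^{(2)}(\mathbf{x})$ and $\widetilde{p}^{(2)}(\mathbf{x})\le\widetilde{p}^{(1)}(\mathbf{x})$, hence $\widetilde{p}^{(1)}=\widetilde{p}^{(2)}$; then Eq.~\eqref{Eqn:Hopf_Cole_BoLM_Conservative}, together with $\widetilde{\mu}_{0}(\mathbf{x})>0$, $g(\widetilde{p}(\mathbf{x}))>0$, and the invertibility of $\mathbf{K}(\mathbf{x})$, forces $\mathbf{v}^{(1)}=\mathbf{v}^{(2)}$.

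The main obstacle is not any hard estimate but the bookkeeping required to confirm that the pointwise-transformed fields $\mathcal{P}^{(i)}$ genuinely inherit the stated regularity and genuinely satisfy the linear problem in the weak (or distributional) sense demanded by Lemma~\ref{Lemma:Hopf_Cole_Uniqueness_Darcy}; one must also record that each $\mathcal{P}^{(i)}$ automatically takes values in $(-\infty,0)$, being the image of the exponential map $h$, so that $f$ is everywhere applicable and no existence pathology (cf.\ \S\ref{Sec:Cole_Hopf_Ill_posedness}) can interfere. With that in hand the conclusion is immediate.
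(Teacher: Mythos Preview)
Your proposal is correct, and in fact you have written down \emph{both} valid proofs. Your ``alternative'' route---applying the comparison principle (Theorem~\ref{Thm:Hopf_Cole_comparison_principle_Barus}) twice with the roles of the two solutions interchanged, then recovering $\mathbf{v}^{(1)}=\mathbf{v}^{(2)}$ from Eq.~\eqref{Eqn:Hopf_Cole_BoLM_Conservative}---is exactly the argument the paper gives as its main proof. Your primary route---transforming each solution to $\mathcal{P}^{(i)}$ via Eq.~\eqref{Eqn:Hopf_Cole_Functional_form_of_p}, invoking Lemma~\ref{Lemma:Hopf_Cole_Uniqueness_Darcy} for the linear problem, and pulling back through the bijection $f$---is precisely the alternative the paper acknowledges in the remark immediately following the theorem but elects not to spell out. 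The comparison-principle route has the pedagogical advantage of staying entirely within the Barus variables, while your transformation route is arguably cleaner because it reduces the nonlinear uniqueness question to an already-settled linear one in a single step; your careful bookkeeping (regularity transfer, range of $h$ in $(-\infty,0)$) is exactly what that shortcut requires.
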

\begin{proof}
    This result will be a direct consequence of the comparison principle given by Theorem \ref{Thm:Hopf_Cole_comparison_principle_Barus}, in combination with the method of contradiction and the following property of equalities:
    \begin{align}
        a = b \iff a \geq b \; \text{and} \; a \leq b 
    \end{align}
    
    In contrast to the hypothesis, assume that there exists two solutions for Eqs.~\eqref{Eqn:Hopf_Cole_BoLM_Conservative}--\eqref{Eqn:Hopf_Cole_vBC_Conservative}:
     
    \[
      \big(\widetilde{p}^{(1)}(\mathbf{x}),\mathbf{v}^{(1)}(\mathbf{x})\big) 
      \quad \mathrm{and} \quad \big(\widetilde{p}^{(2)}(\mathbf{x}),\mathbf{v}^{(2)}(\mathbf{x})\big) 
   \]
    Since they are the solution of the boundary value problems, they must satisfy the given boundary conditions: 
    \begin{align}
        \label{Eqn:Hopf_Cole_uniqueness_Barus_BCs_of_1}
        \widetilde{p}^{(1)}(\mathbf{x}) = \widetilde{p}^{(1)}_{\mathrm{p}}(\mathbf{x}) = \widetilde{p}_{\mathrm{p}}(\mathbf{x})
        \; \mathrm{on} \; \Gamma^{p} 
        \quad \mathrm{and} \quad 
        \mathbf{v}^{(1)}(\mathbf{x}) \bullet \widehat{\mathbf{n}}(\mathbf{x}) 
        = v^{(1)}_{n}(\mathbf{x}) = v_{n}(\mathbf{x})
        \; \mathrm{on} \; \Gamma^{v} \\
        \label{Eqn:Hopf_Cole_uniqueness_Barus_BCs_of_2}
        \widetilde{p}^{(2)}(\mathbf{x}) = \widetilde{p}^{(2)}_{\mathrm{p}}(\mathbf{x}) = \widetilde{p}_{\mathrm{p}}(\mathbf{x})
        \; \mathrm{on} \; \Gamma^{p} 
        \quad \mathrm{and} \quad 
        \mathbf{v}^{(2)}(\mathbf{x}) \bullet \widehat{\mathbf{n}}(\mathbf{x}) 
        = v^{(2)}_{n}(\mathbf{x}) = v_{n}(\mathbf{x})
        \; \mathrm{on} \; \Gamma^{v}
    \end{align}

    In view of the equalities in Eqs.~\eqref{Eqn:Hopf_Cole_uniqueness_Barus_BCs_of_1} and \eqref{Eqn:Hopf_Cole_uniqueness_Barus_BCs_of_2}, we consider the following inequalities: 
     \begin{align}
        \label{Eqn:Hopf_Cole_uniqueness_Barus_p1p_geq_p2p}
        \widetilde{p}^{(1)}_{\mathrm{p}}(\mathbf{x})
        \geq \widetilde{p}^{(2)}_{\mathrm{p}}(\mathbf{x}) 
        \; \mathrm{on} \, \Gamma^{p} 
        \quad \mathrm{and} \quad 
        v_{n}^{(1)}(\mathbf{x}) \leq  v_{n}^{(2)}(\mathbf{x})
        \; \mathrm{on} \, \Gamma^{v} 
    \end{align}
    Then, Theorem \ref{Thm:Hopf_Cole_comparison_principle_Barus} implies: 
    \begin{align}
       \label{Eqn:Hopf_Cole_uniqueness_Barus_p1_geq_p2}
       \widetilde{p}^{(1)}(\mathbf{x}) \geq   \widetilde{p}^{(2)}(\mathbf{x}) \quad \forall \mathbf{x} \in \overline{\Omega}
   \end{align}
   Now proceeding in the reverse direction, we consider the following inequalities: 
     \begin{align}
        \label{Eqn:Hopf_Cole_uniqueness_Barus_p2p_geq_p1p}
        \widetilde{p}^{(2)}_{\mathrm{p}}(\mathbf{x})
        \geq \widetilde{p}^{(1)}_{\mathrm{p}}(\mathbf{x}) 
        \; \mathrm{on} \, \Gamma^{p} 
        \quad \mathrm{and} \quad 
        v_{n}^{(2)}(\mathbf{x}) \leq  v_{n}^{(1)}(\mathbf{x})
        \; \mathrm{on} \, \Gamma^{v} 
    \end{align}
    Again, Theorem \ref{Thm:Hopf_Cole_comparison_principle_Barus} implies: 
    \begin{align}
        \label{Eqn:Hopf_Cole_uniqueness_Barus_p2_geq_p1}
       \widetilde{p}^{(2)}(\mathbf{x}) \geq   \widetilde{p}^{(1)}(\mathbf{x}) \quad \forall \mathbf{x} \in \overline{\Omega}
   \end{align}
   Inequalities \eqref{Eqn:Hopf_Cole_uniqueness_Barus_p1_geq_p2} and \eqref{Eqn:Hopf_Cole_uniqueness_Barus_p2_geq_p1} collectively establish: 
   \begin{align}
       \widetilde{p}^{(1)}(\mathbf{x})  =  \widetilde{p}^{(2)}(\mathbf{x}) 
       \quad \forall \mathbf{x} \in \overline{\Omega}
   \end{align}
   Finally, using the balance of linear momentum \eqref{Eqn:Hopf_Cole_BoLM_Conservative} we assert that 
   \begin{align}
       \mathbf{v}^{(1)}(\mathbf{x}) 
       = \mathbf{v}^{(2)}(\mathbf{x}) 
       \quad \forall \mathbf{x} \in \overline{\Omega}
   \end{align}
\end{proof}

\begin{remark}
    An alternative proof of Theorem \ref{Thm:Hopf_Cole_Uniqueness_of_solutions_under_Barus_model} can be constructed by utilizing the uniqueness of solutions for the transformed variable $\mathcal{P}(\mathbf{x})$, as given by Lemma \ref{Lemma:Hopf_Cole_Uniqueness_Darcy}, and the bijective property of the transformation \eqref{Eqn:Hopf_Cole_Functional_form_of_p}. We do not present this alternative proof here for the sake of brevity.
\end{remark}

\subsection{Ill-posedness of some velocity-driven problems under the Barus model}
\label{Sec:Cole_Hopf_Ill_posedness}

We now show that a class of boundary value problems with mixed velocity-pressure boundary conditions has no solutions under the Barus model. We refer to this class as velocity-driven problems.

In its simplest form, consider a one-dimensional problem as shown in Fig.~\ref{Fig:Hopf_Cole_Existence_of_soluitions}. The domain has length $L$, with a prescribed velocity $v_0$ at the left end and a prescribed pressure $p = p_0$ at the right end. Additionally, we assume that the body force $\rho \mathbf{b}$ is zero and that the permeability is constant.

\begin{figure}[ht]
    \centering
    \includegraphics[width=0.55\linewidth]{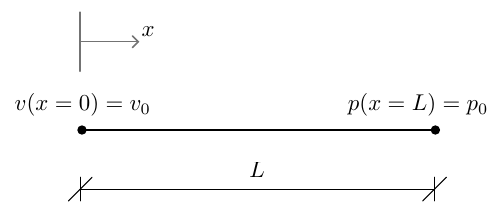}
    \caption{An ill-posed velocity-driven boundary value problem under the Barus model.}
    \label{Fig:Hopf_Cole_Existence_of_soluitions}
\end{figure}

\subsubsection{Solution directly using the Barus model}
The one-dimensional boundary value problem reads as follows: 
\begin{subequations}
    \begin{alignat}{2}
        \label{Eqn:Hopf_Cole_1D_BoLM}
        &\frac{\mu_0}{k} \, \exp\left[\beta \, \left(\frac{p(x)}{p_0} - 1\right)\right] \, v(x) + \frac{\mathrm{d} \, p(x)}{\mathrm{d} \, x} = 0 
        &&\quad \forall x \in (0,L) \\
        \label{Eqn:Hopf_Cole_1D_BoM}
        &\frac{\mathrm{d} \, v(x)}{\mathrm{d} \, x} = 0 
        &&\quad \forall x \in (0,L) \\
        \label{Eqn:Hopf_Cole_1D_pBC}
        &p(x) = p_R = p_0 
        &&\quad \mathrm{at} \;  x = L \\
        \label{Eqn:Hopf_Cole_1D_vBC}
        &v(x) = v_0 
        &&\quad \mathrm{at} \;  x = 0        
    \end{alignat}
\end{subequations}

Equations \eqref{Eqn:Hopf_Cole_1D_BoM} and \eqref{Eqn:Hopf_Cole_1D_vBC} imply that 
\begin{align}
    \label{Eqn:Hopf_Cole_1D_Barus_v}
    v(x) = v_0 \quad \forall x \in [0,L]
\end{align}
Using the solution for the velocity field, Eq.~\eqref{Eqn:Hopf_Cole_1D_BoLM} can be rewritten as follows: 
\begin{align}
    \label{Eqn:Hopf_Cole_1D_GE}
    \frac{\mathrm{d} \, p}{\mathrm{d} \, x} 
    = - \frac{\mu_{0} \, v_0}{k} \, \text{exp}\left[\beta\left(\frac{p}{p_{0}}- 1\right)\right] 
\end{align}
Integrating over the spatial interval $(x, L)$, we write:
\begin{align}
    \int^{p_R = p_0}_{p(x)} \text{exp}\left[-\beta\left(\frac{p}{p_{0}}- 1\right)\right] \mathrm{d}p = \int^{L}_{x} -\frac{\mu_0 \, v_{0}}{k} \, \mathrm{d}x
\end{align}
Upon executing the integrals, we get: 
\begin{align}
    \text{exp}\left[-\beta\left(\frac{p(x)}{p_{0}}- 1\right)\right] = 1 - \frac{\mu_{0} \, v_{0} \, \beta}{p_{0} \, k} \, (L - x)
\end{align}
Rearranging the terms, we get the following expression for the pressure: 
\begin{align}
    \label{Eqn:Hopf_Cole_1D_Barus_p}
    p(x) = p_0 \, \left( 1 -\frac{1}{\beta} \, \ln\left[1 - \frac{\mu_{0} \, v_{0} \, \beta}{p_{0} \, k} \, (L - x)\right] \right) 
\end{align}
Equations \eqref{Eqn:Hopf_Cole_1D_Barus_v} and \eqref{Eqn:Hopf_Cole_1D_Barus_p} are the analytical expressions for the solutions fields, obtained by solving the Barus model directly. 

However, the solution is not defined for any prescribed inputs. If the argument of the logarithmic function in Eq.~\eqref{Eqn:Hopf_Cole_1D_Barus_p} is negative, then the solution for the pressure does not exist. Thus, a solution exists if and only if the following condition holds:
\begin{align}
    L - x < \frac{p_{0} \, k}{\mu_{0} \, v_{0} \, \beta}  \quad \forall x \in [0,L]
\end{align}
The worst-case scenario occurs when $x=0$. Substituting $x=0$ into the preceding inequality, we obtain the condition that must be satisfied for solutions to exist: 
\begin{align}
    \label{Eqn:Hopf_Cole_Existence_condition}
    v_{0} < \frac{p_0 \, k}{\mu_0 \, L \, \beta}
\end{align}

\subsubsection{Solution using the Hopf-Cole transformation}
The transformed one-dimensional boundary value problem (cf. Eqs.~ \eqref{Eqn:Hopf_Cole_Linear_BoLM}--\eqref{Eqn:Hopf_Cole_Linear_vBC}) takes the following form: 
\begin{subequations}
    \begin{alignat}{2}
        \label{Eqn:Hopf_Cole_1D_Linear_BoLM}
        &\frac{\mu_0}{k} \, v(x) + \frac{\mathrm{d} \, \mathcal{P}(x)}{\mathrm{d} \, x} = 0 
        &&\quad \forall x \in (0,L) \\
        \label{Eqn:Hopf_Cole_1D_Linear_BoM}
        &\frac{\mathrm{d} \, v(x)}{\mathrm{d} \, x} = 0 
        &&\quad \forall x \in (0,L) \\
        \label{Eqn:Hopf_Cole_1D_Linear_pBC}
        &\mathcal{P}(x) = -\frac{p_0}{\beta} 
        &&\quad \mathrm{at} \;  x = L \\
        \label{Eqn:Hopf_Cole_1D_Linear_vBC}
        &v(x) = v_0 
        &&\quad \mathrm{at} \;  x = 0        
    \end{alignat}
\end{subequations}

Similar to the derivation presented in the preceding section, Eqs.~\eqref{Eqn:Hopf_Cole_1D_Linear_BoM} and \eqref{Eqn:Hopf_Cole_1D_Linear_vBC} imply: 
\begin{align}
    \label{Eqn:Hopf_Cole_1D_transformed_v}
    v(x) = v_0 \quad \forall x \in [0,L]
\end{align}
Consequently, Eq.~\eqref{Eqn:Hopf_Cole_1D_Linear_BoLM} becomes: 
\begin{align}
    \frac{\mu_0 \, v_0}{k} 
    + \frac{\mathrm{d} \, \mathcal{P}(x)}{\mathrm{d} \, x} = 0 
\end{align}
Integrating over the spatial interval $(x, L)$, we write at: 
\begin{align}
    \mathcal{P}(x = L) - \mathcal{P}(x) 
    = -\frac{\mu_0 \, v_0}{k} \, (L -x)
\end{align}
Enforcing the pressure boundary condition \eqref{Eqn:Hopf_Cole_1D_Linear_pBC}, we arrive:
\begin{align}
    \label{Eqn:Hopf_Cole_1D_transformed_p}
    \mathcal{P}(x) = -\frac{p_0}{\beta} + \frac{\mu_0 \, v_0}{k} \, (L-x)
\end{align}
Equations \eqref{Eqn:Hopf_Cole_1D_transformed_v} and \eqref{Eqn:Hopf_Cole_1D_transformed_p} provide analytical expressions for the transformed pressure $\mathcal{P}(x)$ and $v(x)$.

We use the transformation \eqref{Eqn:Hopf_Cole_Functional_form_of_original_p} to find $p(x)$ from the obtained transformed pressure $\mathcal{P}(x)$. Note that $\xi(x) = 0$ as the body force is absent in the boundary value problem. But for the transformation to be well-defined, $\mathcal{P}(x)$ should be negative everywhere in the domain---otherwise, the argument in the logarithmic function will be negative, and the expression in Eq.~\eqref{Eqn:Hopf_Cole_Functional_form_of_original_p} is not well-defined. 

By virtue of Eq.~\eqref{Eqn:Hopf_Cole_1D_transformed_p}, $\mathcal{P}(x) < 0$ leads to  
\begin{align}
    \frac{p_0}{\beta} < \frac{\mu_0 \, v_0}{k} \, (L-x)
\end{align}
The preceding inequality must hold for all $x \in [0,L]$. Evaluating for the most critical case (i.e., $x = 0$), we get the following condition, which must be met for the transformation to be well-defined: 
\begin{align}
     v_{0} < \frac{p_0 \, k}{\mu_0 \, L \, \beta}
\end{align}
This condition matches the result obtained earlier by directly working with the Barus model (cf. Eq.~\eqref{Eqn:Hopf_Cole_Existence_condition}). 

The discussion so far highlights two main points: 
\begin{enumerate}
    \item[{[A]}] Solutions might not exist for a class of velocity-driven problems under the Barus model, as illustrated in the preceding example. 
    \item[{[B]}] The Hopf-Cole transformation also predicts the non-existence of solutions for this class of velocity-driven problems. Notably, the transformation converts the boundary value problem under the Barus model into the Darcy equations, for which solutions do exist. The non-existence of a solution arises from the failure of the inversion (going from $\mathcal{P}(\mathbf{x})$ to $\widetilde{p}(\mathbf{x})$), as $\mathcal{P}(\mathbf{x})$, the solution to the transformed boundary value problem, does not necessarily lie within $(-\infty, 0)$. According to Eq.~\eqref{Eqn:Hopf_Cole_Functional_form_of_mathcal_P}, a positive value of $\mathcal{P}(\mathbf{x})$ would make the argument of the logarithm negative, implying that no real number $\widetilde{p}(\mathbf{x})$ can satisfy the equation.
\end{enumerate}

\section{RECIPROCITY: A MECHANICS-BASED PRINCIPLE}
\label{Sec:S5_Hopf_Cole_Mechanics}
In the field of mechanics, various classes of problems---such as elastostatics and elastodynamics---have been shown to exhibit reciprocity \citep{gurtin1973linear,achenbach2003reciprocity}. 
In the context of porous media, reciprocity has also been demonstrated for the Darcy and Darcy-Brinkman models \citep{shabouei2016mechanics}---also see Appendix \ref{App:Hopf_Cole_Reciprocal}, and more recently for the double porosity/permeability model \citep{nakshatrala2018modeling,nakshatrala2024dynamic}. Importantly, all these aforementioned problems are inherently linear. In what follows, we establish reciprocity for the \emph{nonlinear} Barus model, made possible through the proposed approach outlined in Fig.~\ref{Fig:Hopf_Cole_Flowchart}.

\begin{theorem}[A reciprocal relation on $\widetilde{p}(\mathbf{x})$]
    \label{Thm:Hopf_Cole_Betti_Theorem_Barus}
    Let $\big(\widetilde{p}^{(1)}(\mathbf{x}), \mathbf{v}^{(1)}(\mathbf{x})\big)$ and $\big(\widetilde{p}^{(2)}(\mathbf{x}),\mathbf{v}^{(2)}(\mathbf{x})\big)$ be the solutions of Eqs.~\eqref{Eqn:Hopf_Cole_BoLM_Conservative}--\eqref{Eqn:Hopf_Cole_BoM_Conservative} corresponding to the prescribed boundary conditions $\big(\widetilde{p}_{\mathrm{p}}^{(1)}(\mathbf{x}),v^{(1)}_n(\mathbf{x})\big)$ and $\big(\widetilde{p}_{\mathrm{p}}^{(2)}(\mathbf{x}),v^{(2)}_n(\mathbf{x})\big)$, respectively. These boundary conditions and the associated solution fields satisfy the following reciprocal relation:
    \begin{align}
        \label{Eqn:Hopf_Cole_Betti_relation_Barus}
        &\int_{\Gamma^{v}} v_{n}^{(2)}(\mathbf{x}) 
        \, \text{exp}\left[ -\beta \left(\frac{\widetilde{p}^{(1)}(\mathbf{x})}{p_0} - 1 \right) \right] \, \mathrm{d} \Gamma 
        - \int_{\Gamma^{p}}
        \text{exp}\left[ -\beta \left(\frac{\widetilde{p}_{\mathrm{p}}^{(2)}(\mathbf{x})}{p_0} - 1 \right) \right] \, \mathbf{v}^{(1)}(\mathbf{x}) 
        \bullet \widehat{\mathbf{n}}(\mathbf{x}) 
        \, \mathrm{d} \Gamma \nonumber \\
        &\hspace{1.5in} = \int_{\Gamma^{v}}
        v^{(1)}_{n}(\mathbf{x}) \,\text{exp}\left[ -\beta \left(\frac{\widetilde{p}^{(2)}(\mathbf{x})}{p_0} - 1 \right) \right]
        \, \mathrm{d} \Gamma  \nonumber \\ 
        &\hspace{2in}- \int_{\Gamma^{p}}
        \text{exp}\left[ -\beta \left(\frac{\widetilde{p}_{\mathrm{p}}^{(1)}(\mathbf{x})}{p_0} - 1 \right) \right]
        \, \mathbf{v}^{(2)}(\mathbf{x}) 
        \bullet \widehat{\mathbf{n}}(\mathbf{x}) 
        \, \mathrm{d} \Gamma 
    \end{align}  
    \begin{proof}
        Our strategy is to follow the steps in the proposed approach, as outlined in the flowchart (Fig.~\ref{Fig:Hopf_Cole_Flowchart}), in conjunction with Theorem \ref{Thm:Hopf_Cole_Betti_Theorem_Darcy}. Executing Step 1, we get $\mathcal{P}_{\mathrm{p}}^{(1)}(\mathbf{x})$ and $\mathcal{P}_{\mathrm{p}}^{(2)}(\mathbf{x})$ from $\widetilde{p}^{(1)}_{\mathrm{p}}(\mathbf{x})$ and $\widetilde{p}^{(2)}_{\mathrm{p}}(\mathbf{x})$, respectively. Velocity boundary conditions---that is, $v_n^{(1)}(\mathbf{x})$ and $v_n^{(2)}(\mathbf{x})$---are unaltered. 
        
        As per Step 2, we get $\big(\mathcal{P}^{(1)}(\mathbf{x}),\mathbf{v}^{(1)}(\mathbf{x})\big)$ and $\big(\mathcal{P}^{(2)}(\mathbf{x}),\mathbf{v}^{(2)}(\mathbf{x})\big)$ by solving the boundary value problem \eqref{Eqn:Hopf_Cole_Linear_BoLM}--\eqref{Eqn:Hopf_Cole_Linear_vBC} under the prescribed conditions $\big(\mathcal{P}_{\mathrm{p}}^{(1)}(\mathbf{x}),v_n^{(1)}(\mathbf{x})\big)$ and $\big(\mathcal{P}_{\mathrm{p}}^{(2)}(\mathbf{x}),v^{(2)}_n(\mathbf{x})\big)$, respectively. Next, from Theorem \ref{Thm:Hopf_Cole_Betti_Theorem_Darcy}, presented in Appendix \ref{App:Hopf_Cole_Reciprocal}, we have the following relation: 
        \begin{align}
            \int_{\Gamma^{v}} v_{n}^{(2)}(\mathbf{x}) 
            \, \mathcal{P}^{(1)}(\mathbf{x}) \, \mathrm{d} \Gamma 
            - \int_{\Gamma^{p}}
            \mathcal{P}^{(2)}_{\mathrm{p}}(\mathbf{x}) 
            \, \mathbf{v}^{(1)}(\mathbf{x}) 
            \bullet \widehat{\mathbf{n}}(\mathbf{x}) 
            \, \mathrm{d} \Gamma 
            &=
            \int_{\Gamma^{v}}
            v^{(1)}_{n}(\mathbf{x}) \, \mathcal{P}^{(2)}(\mathbf{x}) 
            \, \mathrm{d} \Gamma \nonumber \\ 
            &\hspace{0.4in} - \int_{\Gamma^{p}}
            \mathcal{P}^{(1)}_{\mathrm{p}}(\mathbf{x}) 
            \, \mathbf{v}^{(2)}(\mathbf{x}) 
            \bullet \widehat{\mathbf{n}}(\mathbf{x}) 
            \, \mathrm{d} \Gamma 
        \end{align}
        Now, executing Step 3---invoking the transformation \eqref{Eqn:Hopf_Cole_Functional_form_of_mathcal_P} on $\mathcal{P}^{(1)}(\mathbf{x})$, $\mathcal{P}^{(2)}(\mathbf{x})$, $\mathcal{P}^{(1)}_{\mathrm{p}}(\mathbf{x})$, and $\mathcal{P}_{\mathrm{p}}^{(2)}(\mathbf{x})$, we obtain: 
        \begin{align}
            &\int_{\Gamma^{v}} v_{n}^{(2)}(\mathbf{x}) 
            \, p_0 \, \text{exp}\left[-\beta 
            \left(\frac{\widetilde{p}^{(1)}(\mathbf{x})}{p_0} - 1 \right) 
            \right] \, \mathrm{d} \Gamma 
            - \int_{\Gamma^{p}}
            p_0 \, \text{exp}\left[ -\beta \left(\frac{\widetilde{p}_{\mathrm{p}}^{(2)}(\mathbf{x})}{p_0} - 1 \right) \right] 
            \, \mathbf{v}^{(1)}(\mathbf{x}) \bullet 
            \widehat{\mathbf{n}}(\mathbf{x}) 
            \, \mathrm{d} \Gamma \nonumber \\
            &\hspace{1.5in} = \int_{\Gamma^{v}}
            v^{(1)}_{n}(\mathbf{x}) \, p_0 \,\text{exp}\left[ -\beta \left(\frac{\widetilde{p}^{(2)}(\mathbf{x})}{p_0} - 1 
            \right) \right] \, \mathrm{d} \Gamma  \nonumber \\ 
            &\hspace{2.5in} - \int_{\Gamma^{p}}
            p_0 \, \text{exp}\left[ -\beta \left(\frac{\widetilde{p}_{\mathrm{p}}^{(1)}(\mathbf{x})}{p_0} - 1 \right) \right]
            \, \mathbf{v}^{(2)}(\mathbf{x}) 
            \bullet \widehat{\mathbf{n}}(\mathbf{x}) 
            \, \mathrm{d} \Gamma 
        \end{align}  
     Finally, canceling the factor $p_0$ from both sides of the above equation---as it is a constant---yields the desired reciprocal relation given by Eq.~\eqref{Eqn:Hopf_Cole_Betti_relation_Barus}.
    \end{proof}
\end{theorem}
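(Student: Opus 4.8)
The plan is to ride the three-step workflow of Figure~\ref{Fig:Hopf_Cole_Flowchart} and reduce the claim to the (linear) reciprocal relation already available for the transformed Darcy system, namely Theorem~\ref{Thm:Hopf_Cole_Betti_Theorem_Darcy} in Appendix~\ref{App:Hopf_Cole_Reciprocal}. First I would carry out Step~1: push the prescribed modified pressures $\widetilde{p}^{(1)}_{\mathrm{p}}(\mathbf{x})$ and $\widetilde{p}^{(2)}_{\mathrm{p}}(\mathbf{x})$ through the inverse map to obtain $\mathcal{P}^{(1)}_{\mathrm{p}}(\mathbf{x})$ and $\mathcal{P}^{(2)}_{\mathrm{p}}(\mathbf{x})$ via Eq.~\eqref{Eqn:Hopf_Cole_Functional_form_of_p}, and record that the normal-velocity data $v^{(1)}_n$ and $v^{(2)}_n$ are untouched by the transformation. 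Two observations deserve emphasis here. (i) Both boundary value problems share the \emph{same} coefficient field $\widetilde{\mu}_0(\mathbf{x})^{-1}\mathbf{K}(\mathbf{x})$ and differ only in their boundary data, which is exactly the setting in which the Betti-type identity for the Darcy system applies. (ii) The velocity fields $\mathbf{v}^{(1)}$ and $\mathbf{v}^{(2)}$ produced by Step~2 coincide with the velocity fields of the original Barus problems, because the Hopf--Cole transformation leaves the velocity invariant (cf.~Eqs.~\eqref{Eqn:Hopf_Cole_BoLM_Conservative} and \eqref{Eqn:Hopf_Cole_Linear_BoLM}); this is what lets the $\mathbf{v}^{(i)}$ appearing in the final statement be identified with those appearing in the Darcy relation.

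Next, as Step~2, I would invoke Theorem~\ref{Thm:Hopf_Cole_Betti_Theorem_Darcy} applied to the pairs $(\mathbf{v}^{(1)},\mathcal{P}^{(1)})$ and $(\mathbf{v}^{(2)},\mathcal{P}^{(2)})$, which yields
\begin{align*}
  \int_{\Gamma^{v}} v_{n}^{(2)}(\mathbf{x}) \, \mathcal{P}^{(1)}(\mathbf{x}) \, \mathrm{d}\Gamma
  - \int_{\Gamma^{p}} \mathcal{P}^{(2)}_{\mathrm{p}}(\mathbf{x}) \, \mathbf{v}^{(1)}(\mathbf{x})\bullet\widehat{\mathbf{n}}(\mathbf{x}) \, \mathrm{d}\Gamma
  &= \int_{\Gamma^{v}} v^{(1)}_{n}(\mathbf{x}) \, \mathcal{P}^{(2)}(\mathbf{x}) \, \mathrm{d}\Gamma \\
  &\quad - \int_{\Gamma^{p}} \mathcal{P}^{(1)}_{\mathrm{p}}(\mathbf{x}) \, \mathbf{v}^{(2)}(\mathbf{x})\bullet\widehat{\mathbf{n}}(\mathbf{x}) \, \mathrm{d}\Gamma .
\end{align*}
Then, as Step~3, I would substitute $\mathcal{P}(\mathbf{x}) = -\tfrac{p_0}{\beta}\exp\!\big[-\beta\big(\widetilde{p}(\mathbf{x})/p_0 - 1\big)\big]$ (Eq.~\eqref{Eqn:Hopf_Cole_Functional_form_of_p}) for each of $\mathcal{P}^{(1)}$, $\mathcal{P}^{(2)}$, $\mathcal{P}^{(1)}_{\mathrm{p}}$, $\mathcal{P}^{(2)}_{\mathrm{p}}$ in the four boundary integrals, observe that the multiplicative constant $-p_0/\beta$ is independent of $\mathbf{x}$ and therefore factors out of every integral on both sides, and cancel it. What remains is exactly Eq.~\eqref{Eqn:Hopf_Cole_Betti_relation_Barus}.

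I expect the substitution-and-cancellation bookkeeping of Step~3 to be entirely routine; the real content of the argument is imported, not generated. Hence the genuine obstacle lies upstream: establishing Theorem~\ref{Thm:Hopf_Cole_Betti_Theorem_Darcy}, the reciprocal relation for the Darcy system in the transformed variable. That, in turn, I would prove by the classical Betti route --- write the weak form of problem~(1) tested against $\mathcal{P}^{(2)}$, write the weak form of problem~(2) tested against $\mathcal{P}^{(1)}$, and subtract; the symmetry of $\widetilde{\mu}_0^{-1}\mathbf{K}$ forces the two volume bilinear terms to annihilate, leaving only the boundary terms that constitute the reciprocal relation. A secondary point to check is that no existence caveat (\S\ref{Sec:Cole_Hopf_Ill_posedness}) is triggered: since $\widetilde{p}^{(1)}$ and $\widetilde{p}^{(2)}$ are assumed to be genuine solutions of the Barus problem, their images $\mathcal{P}^{(1)}$ and $\mathcal{P}^{(2)}$ automatically take values in $(-\infty,0)$, so Eq.~\eqref{Eqn:Hopf_Cole_Functional_form_of_p} and its inverse are well defined on the data at hand and the logarithm never sees a nonpositive argument.
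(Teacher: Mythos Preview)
Your proposal is correct and follows essentially the same approach as the paper: transform the boundary data, invoke the linear Darcy reciprocal relation (Theorem~\ref{Thm:Hopf_Cole_Betti_Theorem_Darcy}) on the $(\mathcal{P}^{(i)},\mathbf{v}^{(i)})$ pairs, and then substitute $\mathcal{P}=-\tfrac{p_0}{\beta}\exp[-\beta(\widetilde{p}/p_0-1)]$ and cancel the common scalar factor. Your additional remarks---that the velocity fields are unchanged by the transformation, that both problems share the same coefficient $\widetilde{\mu}_0^{-1}\mathbf{K}$, and that the well-posedness caveat of \S\ref{Sec:Cole_Hopf_Ill_posedness} is automatically satisfied---are valid supporting observations that the paper leaves implicit.
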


To the best of our knowledge, this is the first instance where reciprocity has been demonstrated for a nonlinear problem. A cursory review of the literature might suggest otherwise: \textsc{Clifford Truesdell} addressed reciprocity in the context of hyperelasticity---a nonlinear phenomenon in solid mechanics \citep{truesdell1963meaning}. However, a more careful examination reveals that his work actually dealt with superposed infinitesimal strains, which are linear in terms of displacements. Moreover, his focus was on establishing a connection between reciprocity and the existence of a potential, which, in the context of hyperelasticity, corresponds to the stored strain energy functional.

Finally, the above reciprocal relation can be used as an \emph{a priori} estimator to assess the accuracy of a numerical formulation. For example, numerical solutions can be obtained for a given problem under two prescribed conditions. Using the prescribed quantities and the obtained numerical solution fields, one can then check how well Eq.~\eqref{Eqn:Hopf_Cole_Betti_relation_Barus} is satisfied.

\section{INSIGHTS USING A CANONICAL EXAMPLE}
\label{Sec:S6_Hopf_Cole_NR}
In this section, we establish the accuracy, computational, and analytical advantages of the proposed approach, which avails the Hopf-Cole transformation. To this end, we consider a canonical problem---the reservoir problem, which is widely used in the literature; for example, see \citep{nakshatrala2011numerical,chang2017modification}. 

Consider a reservoir, idealized with a unit width along the out-of-the-plane direction and a rectangular cross-section with a length of $L = 100\, \mathrm{m}$ and a height of $H = 30 \, \mathrm{m}$, as shown in Fig.~\ref{Fig:Hopf_Cole_Reservoir_BVP}. The length of the production well is $W = 0.2 \, \mathrm{m}$. The left side is subjected to an injection pressure $p_{\mathrm{inj}}$, while the pressure at the production well is equal to the atmospheric pressure $p_{\mathrm{atm}}$. On the rest of the faces, the normal component of the velocity is zero (i.e., no penetration boundary condition). Table \ref{Table:Hopf_Cole_Parameters} provides the simulation parameters.

The boundary value problem was solved using the FEniCS framework---an open-source library for solving partial differential equations via the finite element method \citep{logg2012automated}.
The mixed Galerkin formulation was employed \citep{brezzi2012mixed}, and three-node elements were used to discretize the computational domain, with the number of nodes given by $n_{\mathrm{nodes}} = 4042$ and the number of elements by $n_{\mathrm{elements}} = 8082$. To satisfy the Ladyzhenskaya-Babu\v{s}ka-Brezzi (LBB) stability condition, Taylor–Hood spaces were used, with second-order interpolation for the velocity and first-order interpolation for the pressure \citep{ern2004theory}.

\begin{figure}[h!]
    \centering
    \includegraphics[width=0.65\linewidth]{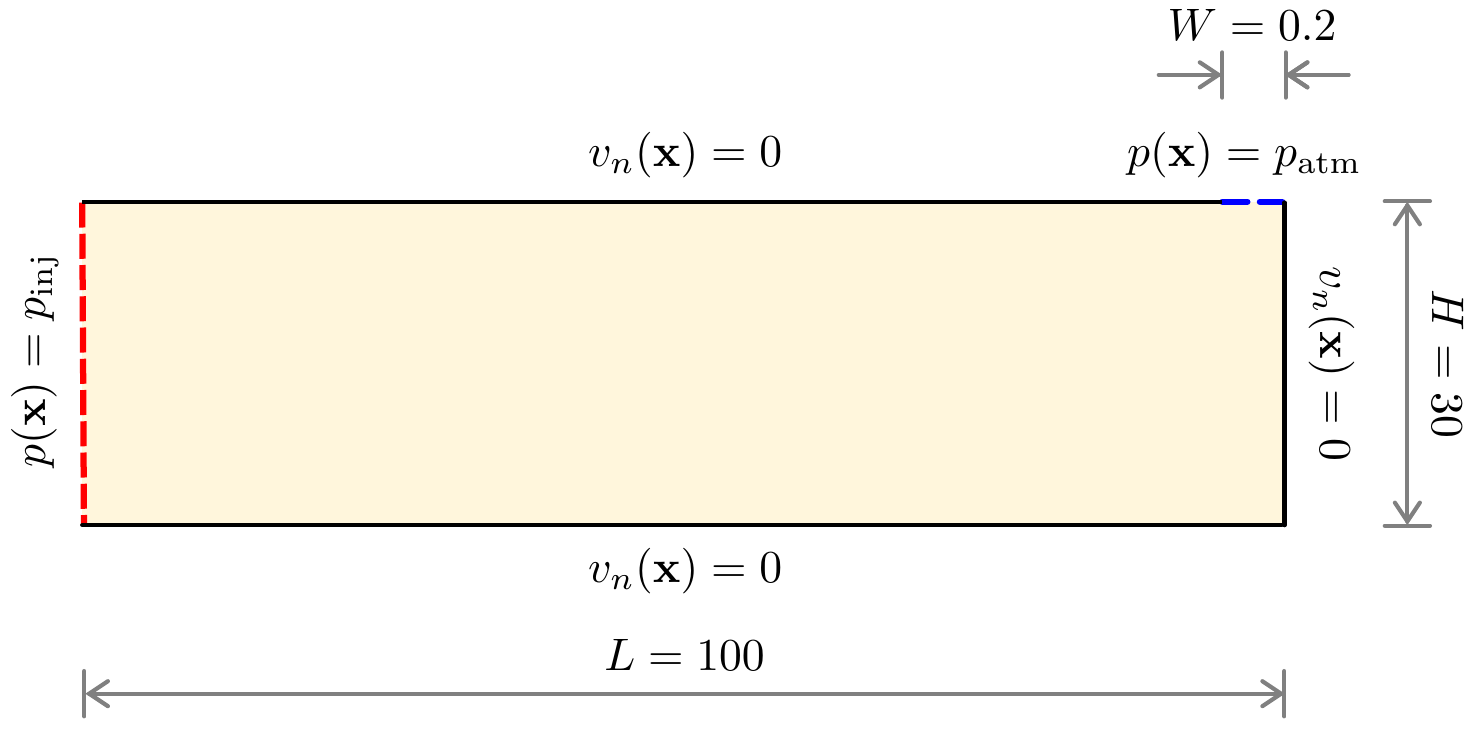}
    \caption{Problem description of the reservoir problem. The figure shows the 2D cross-section of the reservoir, with a unit width along the out-of-plane direction, which is not depicted in the figure.}
    \label{Fig:Hopf_Cole_Reservoir_BVP}
\end{figure}

\begin{table}[h!]
    \caption{Parameters used in the numerical simulations. \label{Table:Hopf_Cole_Parameters}}
    \centering
    \begin{tabular}{|c|c|}
        \hline
        \textbf{Parameter} & \textbf{Value} \\
        \hline
        $p_{\text{inj}}$ & $(10-60,000) \, p_{\text{atm}}$ \\
        \hline
        $p_0 = p_{\text{atm}}$ & 101325 $\text{Pa}$ \\
        \hline
        $\beta$ & $3 \times 10^{-6}$\\
        \hline
        $\mu_{0}$ & $3.95 \times 10^{-5} \; \text{Pa}\cdot\text{s}$\\
        \hline
        $k$ & $10^{-12} \; \text{m}^{-2}$\\
        \hline
        $\mathbf{b}(\mathbf{x})$ & $\mathbf{0} = (0,0) \; \text{m}/\text{s}^{2}$\\
        \hline
    \end{tabular}
\end{table}

\subsection{Numerical verification and computational advantages}
Figure~\ref{Fig:Hopf_Cole_Solution_profiles} demonstrates that the solution fields under the original and proposed approaches exhibit excellent agreement, validating the accuracy of the proposed approach. However, as shown in Fig.~\ref{Fig:Hopf_Cole_Time_to_solution}, the proposed approach requires only a fraction of the time compared to the original approach. Notably, the proposed approach utilizes a solver for the classical Darcy equations, avoiding the need for a nonlinear solver.

\begin{figure}[h!]
    \centering
    \includegraphics[width=0.8\linewidth]{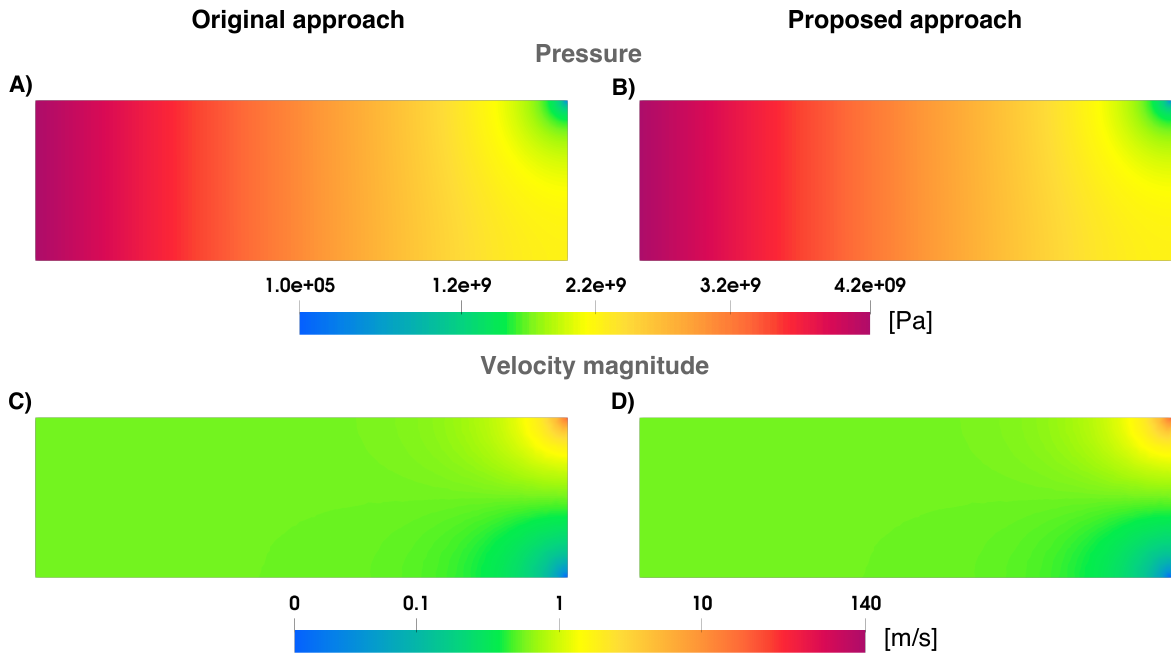}
    \caption{This figure shows that the solution fields using the original (left) and proposed (right) approaches match. Table \ref{Table:Hopf_Cole_Parameters} provides the parameters, except that $p_{\text{inj}} = 4.2\, \times \, 10^{9}$ Pa was used in this study. Note: For enhanced visualization, a logarithmic scale was applied when mapping data to colors while plotting the velocity magnitude profiles (i.e., subfigures C and D)—a feature available in the visualization software, ParaView \citep{paraview}.} \label{Fig:Hopf_Cole_Solution_profiles}
\end{figure}

\begin{figure}[h!]
    \centering
    \includegraphics[width=0.6\linewidth]{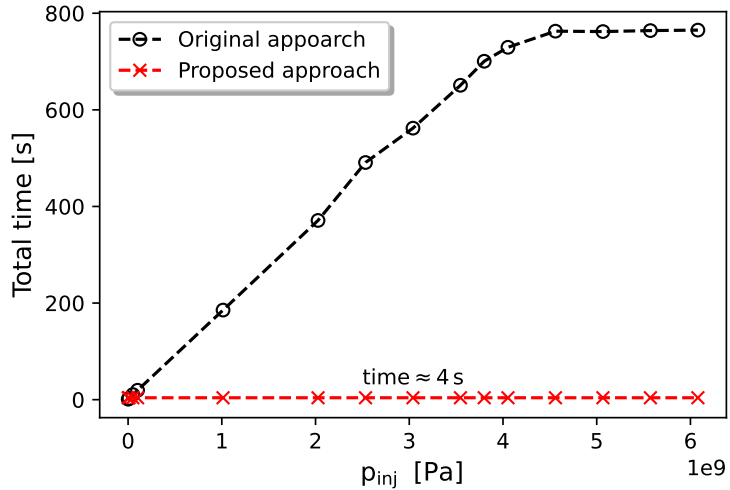}
    \caption{This figure compares the time-to-solution under the original approach (i.e., solving the Barus model directly) versus the proposed approach (i.e., utilizing the Hopf-Cole transformation). \emph{Inference:} The simulation time with the proposed approach is a fraction of that required by the original approach. \label{Fig:Hopf_Cole_Time_to_solution}}
\end{figure}

\subsection{Ceiling flux: An analytical derivation}
To demonstrate the analytical advantages of the proposed approach, we establish that the Barus model predicts a ceiling flux for the reservoir problem. By ceiling flux, we mean that the volumetric flux approaches an asymptote as the injection pressure increases---a phenomenon that has been observed in practice. In contrast, the classical Darcy model predicts an unphysical trend: a continuous linear increase in flux with increasing driving pressure. \citet{nakshatrala2011numerical} demonstrated the existence of ceiling flux under the Barus model through numerical simulations. Herein, we establish this result analytically by leveraging the proposed approach; see the flowchart provided in Fig.~\ref{Fig:Hopf_Cole_Flowchart}.

Before we pursue the steps under the proposed approach, we note that the problem is driven by the difference between $p_{\mathrm{inj}}$ and $p_{\mathrm{atm}}$. Now, following Step 1, we get $\mathcal{P}_{\mathrm{inj}}$ and $\mathcal{P}_{\mathrm{atm}}$ corresponding to $p_{\mathrm{inj}}$ and $p_{\mathrm{atm}}$. For convenience and for the reason that the flow is driven by the pressure difference, we define 
\begin{equation}
    \label{Eqn:Hopf_Cole_Delta_mathcalP}
    \Delta \mathcal{P} = \mathcal{P}_{\text{inj}}
    - \mathcal{P}_{\text{atm}}
\end{equation}

Following Step 2, we obtain the solution fields that satisfy the boundary value problem \eqref{Eqn:Hopf_Cole_Linear_BoLM}--\eqref{Eqn:Hopf_Cole_Linear_vBC}. Noting the linearity of the mentioned boundary value problem, we write the transformed pressure field as follows: 
\begin{equation}
   \mathcal{P}(\mathbf{x}) = \mathcal{P}_{\text{atm}}\, +\,  \Delta \mathcal{P} \,\mathcal{G}(\mathbf{x})
\end{equation}
where $\mathcal{G}(\mathbf{x})$ satisfy the following auxiliary boundary value problem: 
\begin{alignat}{2}
    -&\text{div} \Bigl[\widetilde{\mu}_0^{-1}(\mathbf{x}) \, \mathbf{K}(\mathbf{x}) \, \mathrm{grad}\big[\mathcal{G}(\mathbf{x})\big]\Bigr] = 0 
    &&\quad \mathrm{in} \; \Omega \\
    -&\widehat{\mathbf{n}}(\mathbf{x}) \bullet \widetilde{\mu}_0^{-1}(\mathbf{x}) \, \mathbf{K}(\mathbf{x}) \, \mathrm{grad}\big[\mathcal{G}(\mathbf{x})\big] =  0 
    &&\quad \mathrm{on} \; \Gamma^{v} \\
    &\mathcal{G}(\mathbf{x}) = 1 &&\quad \mathrm{on} \; 
    \Gamma_{\mathrm{inj}} \\
    &\mathcal{G}(\mathbf{x}) = 0 &&\quad \mathrm{on} \; 
    \Gamma_{\mathrm{prod}} 
\end{alignat}
Clearly, $\mathcal{G}(\mathbf{x})$ is entirely determined by the geometry of the domain, permeability $\mathbf{K}(\mathbf{x})$, and $\widetilde{\mu}_0(\mathbf{x})$. In some sense, $\mathcal{G}(\mathbf{x})$ resembles a Green's function, which is widely used in studies on ordinary differential equations (ODEs) and partial differential equations (PDEs) \citep{duffy2015green}. Once $\mathcal{G}(\mathbf{x})$ is known, we can calculate the velocity field:   
\begin{align}
    \mathbf{v}(\mathbf{x})
    = - \frac{1}{\widetilde{\mu}_0(\mathbf{x})} 
    \,\mathbf{K}(\mathbf{x})\,\text{grad}\big[\mathcal{P}(\mathbf{x})\big]
    = - \Delta \mathcal{P} \, \frac{1}{\widetilde{\mu}_0(\mathbf{x})} \,\mathbf{K}(\mathbf{x}) \, \text{grad}\bigl[\mathcal{G}(\mathbf{x})\bigr]
\end{align}
In arriving at the above equation, we have used Eq.~\eqref{Eqn:Hopf_Cole_Delta_mathcalP} and the fact that $\mathcal{P}_{\mathrm{atm}}$ is a constant. Noting that $\Delta \mathcal{P}$ is independent of $\mathbf{x}$, the total volumetric flux at the production boundary $\Gamma_{\text{prod}}$ can be calculated as follows: 
\begin{align}
    \label{Eqn:Hopf_Cole_Q_Linear}
    Q = \int_{\Gamma_{\text{prod}}}
    \mathbf{v}(\mathbf{x}) \bullet \widehat{\mathbf{n}}(\mathbf{x}) \; \mathrm{d}\Gamma
   = \mathcal{C} \, \Delta \mathcal{P} 
\end{align}
where 
\begin{align}
    \mathcal{C} := \int_{\Gamma_{\text{prod}}}
    - \frac{1}{\widetilde{\mu}_0(\mathbf{x})} \,
    \mathbf{K}(\mathbf{x}) \,
    \text{grad}\big[\mathcal{G}(\mathbf{x})\big] \bullet
    \widehat{\mathbf{n}}(\mathbf{x}) \; \mathrm{d}\Gamma
\end{align}
As is the case with $\mathcal{G}(\mathbf{x})$, $\mathcal{C}$ is also entirely determined by the geometry of the domain, permeability $\mathbf{K}(\mathbf{x})$, and $\widetilde{\mu}_0(\mathbf{x})$. 

Now, we execute Step 3: using Eqs.~\eqref{Eqn:Hopf_Cole_h_transformation} and \eqref{Eqn:Hopf_Cole_Q_Linear} we obtain a relationship between the volumetric flux at the production well $Q$ and the injection pressure $p_{\mathrm{inj}}$:
\begin{align}
    \label{Eqn:Hopf_Cole_Q_linear_Ceiling_Flux}
    Q = \frac{C\,p_{\mathrm{atm}}}{\beta} \left(1-\text{exp}\left[-\beta\left(\frac{p_{\mathrm{inj}}}{p_{\mathrm{atm}}} - 1\right)\right]\right)
\end{align}

The relationship satisfies the following properties: 
\begin{enumerate}
    \item No volumetric flux at the production well when $p_{\mathrm{inj}} = p_{\mathrm{atm}}$.
    \item The volumetric flux increases monotonically with $p_{\mathrm{inj}}$. 
    \item The volumetric flux is bounded for large values of $p_{\mathrm{inj}}$. That is, 
    \begin{align}
        \lim_{p_{\mathrm{inj}}\rightarrow \infty} Q 
        = \frac{C \, p_{\mathrm{atm}}}{\beta} 
    \end{align}
    The quantity on the right side of the above equation is a bounded constant.
    \item The slope of the curve tends to zero as the injection pressure tends to infinity: 
    \begin{align}
        \lim_{p_{\mathrm{inj}}\rightarrow \infty} \frac{\mathrm{d} Q}{\mathrm{d} p_{\mathrm{inj}}}
        = \lim_{p_{\mathrm{inj}}\rightarrow \infty} 
        C \, \text{exp}\left[-\beta\left(\frac{p_{\mathrm{inj}}}{p_{\mathrm{atm}}} - 1\right)\right] = 0
    \end{align}
\end{enumerate}
This establishes the ceiling flux via an analytical means.

Figure \ref{Fig:Hopf_Cole_Ceiling_flux} verifies the prediction of the ceiling flux using the proposed approach. In addition, obtaining the data points to trace the $Q$ vs. $p_{\mathrm{inj}}$ curve under the proposed approach computationally marginal compared to the original approach, as described in the figure caption. 

\begin{figure}[h!]
    \centering
    \includegraphics[width=0.6\linewidth]{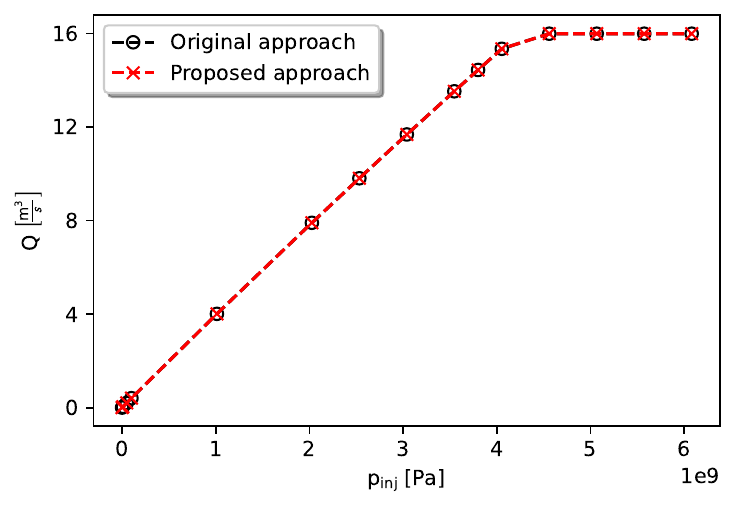}
    \caption{This figure demonstrates that the proposed approaches accurately capture the ceiling flux, exactly matching the results from the original method. However, the proposed approach offers a distinct advantage by requiring only one numerical simulation at an injection pressure other than $p_{\mathrm{atm}}$. Then, by applying Eq.~\eqref{Eqn:Hopf_Cole_Q_linear_Ceiling_Flux}, the ceiling flux at other injection pressures can be easily predicted without additional simulations, significantly reducing computational costs and avoiding the need for a nonlinear solver. In contrast, the original method requires separate numerical simulations for each injection pressure, making it expensive and reliant on a nonlinear solver.}
    \label{Fig:Hopf_Cole_Ceiling_flux}
\end{figure}

\section{CLOSURE}
\label{Sec:S7_Hopf_Cole_Closure}

In this paper, we presented a framework based on the Hopf-Cole transformation for studying fluid flow through porous media, with a particular focus on pressure-dependent viscosity---a characteristic feature of most organic liquids. We considered the Barus model, which captures the experimentally observed exponential dependence of viscosity on fluid pressure. The proposed framework offers several significant advantages for modeling pressure-dependent flow through porous media, including:
\begin{enumerate}
    \item \textbf{Mathematical insights:} Important results, such as maximum principles, can be directly obtained for the Barus model from classical Darcy equation results, eliminating the need for advanced functional analysis tools such as quasilinear elliptic partial differential equation theory. This straightforward derivation holds immense pedagogical value in engineering instruction.
    \item \textbf{Numerical robustness:} The framework provides a reliable method for obtaining numerical solutions by leveraging well-established formulations for classical Darcy equations, thereby avoiding the convergence issues commonly encountered when solving nonlinear equations. This makes the framework a versatile and practical modeling tool for porous media applications involving pressure-sensitive fluid properties.
    \item \textbf{Identification of ill-posed problems:} We identify a class of velocity-driven problems that are ill-posed under the Barus model. The Hopf-Cole transformation clearly reveals the non-existence of solutions for this class of problems. Therefore, the Barus model should be used with care, especially in those situations where the flow through the porous medium is forced by controlling the velocity (i.e., flow rate) on a part of the boundary. 
    \item \textbf{Analytical derivation of ceiling flux:} Features such as ceiling flux under the Barus model (i.e., flux reaching a plateau as pressure increases) can be easily derived using the Hopf-Cole transformation approach, without requiring extensive numerical simulations, as was previously necessary.
    \item \textbf{Extension of mechanical principles:} Mechanical principles such as reciprocal relations, well established for classical Darcy equations (a system of linear partial differential equations), can now be extended to pressure-dependent models, which are inherently nonlinear. This marks the first instance of deriving reciprocal relations for a nonlinear problem, whereas all previous cases pertained to linear problems.
\end{enumerate}

    \appendix
\section{Reciprocity for Darcy-type equations}
\label{App:Hopf_Cole_Reciprocal}
\begin{theorem}[A reciprocal relation on $\mathcal{P}(\mathbf{x})$]
    \label{Thm:Hopf_Cole_Betti_Theorem_Darcy}
    Let $\big(\mathbf{v}^{(1)}(\mathbf{x}),\mathcal{P}^{(1)}(\mathbf{x})\big)$ and $\big(\mathbf{v}^{(2)}(\mathbf{x}),\mathcal{P}^{(2)}(\mathbf{x})\big)$ be the solutions of Eqs.~\eqref{Eqn:Hopf_Cole_Linear_BoLM}--\eqref{Eqn:Hopf_Cole_Linear_BoM} corresponding to the prescribed boundary conditions $\big(v^{(1)}_n(\mathbf{x}),\mathcal{P}_{\mathrm{p}}^{(1)}(\mathbf{x})\big)$ and $\big(v^{(2)}_n(\mathbf{x}),\mathcal{P}_{\mathrm{p}}^{(2)}(\mathbf{x})\big)$, respectively. Then, these two sets of solutions and their corresponding prescribed boundary conditions satisfy the following relation:
    \begin{align}
        \label{Eqn:Hopf_Cole_Betti_Theorem_Darcy}
        \int_{\Gamma^{v}} v_{n}^{(2)}(\mathbf{x}) 
        \, \mathcal{P}^{(1)}(\mathbf{x}) \, \mathrm{d} \Gamma 
        - \int_{\Gamma^{p}}
        \mathcal{P}^{(2)}_{\mathrm{p}}(\mathbf{x}) 
        \, \mathbf{v}^{(1)}(\mathbf{x}) 
        \bullet \widehat{\mathbf{n}}(\mathbf{x}) 
        \, \mathrm{d} \Gamma 
        &=
        \int_{\Gamma^{v}}
        v^{(1)}_{n}(\mathbf{x}) \, \mathcal{P}^{(2)}(\mathbf{x}) 
        \, \mathrm{d} \Gamma \nonumber \\ 
        &\hspace{0.4in} - \int_{\Gamma^{p}}
        \mathcal{P}^{(1)}_{\mathrm{p}}(\mathbf{x}) 
        \, \mathbf{v}^{(2)}(\mathbf{x}) 
        \bullet \widehat{\mathbf{n}}(\mathbf{x}) 
        \, \mathrm{d} \Gamma 
    \end{align}
\end{theorem}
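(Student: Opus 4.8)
The plan is to run the classical Betti-type reciprocity argument for Darcy flow, whose two essential ingredients here are the symmetry of $\mathbf{K}^{-1}(\mathbf{x})$ and the solenoidal nature of the velocity field. First I would take the balance of linear momentum \eqref{Eqn:Hopf_Cole_Linear_BoLM} for the first solution, take its dot product with $\mathbf{v}^{(2)}(\mathbf{x})$, and integrate over $\Omega$; then I would repeat this with the roles of the superscripts $(1)$ and $(2)$ interchanged. This produces two scalar identities, each of the form $\int_{\Omega} \widetilde{\mu}_{0}(\mathbf{x})\,\mathbf{K}^{-1}(\mathbf{x})\,\mathbf{v}^{(i)}(\mathbf{x}) \bullet \mathbf{v}^{(j)}(\mathbf{x}) \, \mathrm{d}\Omega + \int_{\Omega} \mathrm{grad}\big[\mathcal{P}^{(i)}(\mathbf{x})\big] \bullet \mathbf{v}^{(j)}(\mathbf{x}) \, \mathrm{d}\Omega = 0$.

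Second, because $\mathbf{K}(\mathbf{x})$---and hence $\mathbf{K}^{-1}(\mathbf{x})$---is symmetric while $\widetilde{\mu}_{0}(\mathbf{x})$ is a scalar, the two bulk ``stiffness'' integrals obtained by the $i,j$ swap coincide: $\int_{\Omega} \widetilde{\mu}_{0}\,\mathbf{K}^{-1}\mathbf{v}^{(1)}\bullet\mathbf{v}^{(2)}\,\mathrm{d}\Omega = \int_{\Omega} \widetilde{\mu}_{0}\,\mathbf{K}^{-1}\mathbf{v}^{(2)}\bullet\mathbf{v}^{(1)}\,\mathrm{d}\Omega$. Subtracting the two identities therefore annihilates the bulk term and leaves $\int_{\Omega} \mathrm{grad}\big[\mathcal{P}^{(1)}\big] \bullet \mathbf{v}^{(2)} \, \mathrm{d}\Omega = \int_{\Omega} \mathrm{grad}\big[\mathcal{P}^{(2)}\big] \bullet \mathbf{v}^{(1)} \, \mathrm{d}\Omega$.

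Third, I would integrate each side by parts. Using the product rule $\mathrm{grad}\big[\mathcal{P}^{(i)}\big] \bullet \mathbf{v}^{(j)} = \mathrm{div}\big[\mathcal{P}^{(i)}\mathbf{v}^{(j)}\big] - \mathcal{P}^{(i)}\,\mathrm{div}\big[\mathbf{v}^{(j)}\big]$ together with the incompressibility constraint \eqref{Eqn:Hopf_Cole_Linear_BoM} (which kills the last term), the divergence theorem converts both volume integrals into surface integrals $\int_{\partial\Omega} \mathcal{P}^{(i)}\,\mathbf{v}^{(j)} \bullet \widehat{\mathbf{n}} \, \mathrm{d}\Gamma$. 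Splitting $\partial\Omega = \Gamma^{p} \cup \Gamma^{v}$, substituting the prescribed data ($\mathcal{P}^{(i)} = \mathcal{P}_{\mathrm{p}}^{(i)}$ on $\Gamma^{p}$ and $\mathbf{v}^{(j)}\bullet\widehat{\mathbf{n}} = v_{n}^{(j)}$ on $\Gamma^{v}$), and then transferring the two $\Gamma^{p}$ terms across the equality yields precisely Eq.~\eqref{Eqn:Hopf_Cole_Betti_Theorem_Darcy}.

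Since the whole derivation is a formal manipulation, I do not expect a genuine obstacle. The only points demanding care are (i) justifying the integration by parts and the boundary-trace substitutions at the stated regularity---one needs either the classical setting ($\mathcal{P}^{(i)} \in C^{1}(\Omega) \cap C^{0}(\overline{\Omega})$ with $\mathbf{v}^{(i)}$ correspondingly smooth) or an $H^{1}(\Omega) \times H(\mathrm{div},\Omega)$ weak framework so that the normal-trace pairing on $\Gamma^{v}$ is well defined---and (ii) bookkeeping the signs so that the final arrangement of the four boundary terms matches \eqref{Eqn:Hopf_Cole_Betti_Theorem_Darcy} exactly; no idea beyond the symmetry of $\mathbf{K}^{-1}$ and $\mathrm{div}[\mathbf{v}] = 0$ is required.
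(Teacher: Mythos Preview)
Your proposal is correct and follows essentially the same route as the paper: dot the momentum balance for one solution with the velocity of the other, exploit the symmetry of $\mathbf{K}$ (equivalently $\mathbf{K}^{-1}$) to cancel the bulk term, use incompressibility together with the divergence theorem to reduce to boundary integrals, and then split $\partial\Omega$ into $\Gamma^{p}\cup\Gamma^{v}$ and substitute the prescribed data. The only cosmetic difference is that the paper performs the subtraction pointwise before integrating, whereas you integrate first and then subtract; the content is identical.
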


\begin{remark}
    Both solutions $\big(\mathbf{v}^{(1)}(\mathbf{x}),\mathcal{P}^{(1)}(\mathbf{x})\big)$ and $\big(\mathbf{v}^{(2)}(\mathbf{x}),\mathcal{P}^{(2)}(\mathbf{x})\big)$ assume the same boundary partitions to prescribe their respective boundary conditions. 
\end{remark}

\begin{proof}
    Let $\big(\mathcal{P}^{(1)}(\mathbf{x}), \mathbf{v}^{(1)}(\mathbf{x})\big)$ and $\big(\mathcal{P}^{(2)}(\mathbf{x}), \mathbf{v}^{(2)}(\mathbf{x})\big)$ be the solutions corresponding to the two sets of prescribed boundary conditions: $\big(\mathcal{P}_{\mathrm{p}}^{(1)}(\mathbf{x}), v_{n}^{(1)}(\mathbf{x})\big)$ and $\big(\mathcal{P}_{\mathrm{p}}^{(2)}(\mathbf{x}), v_{n}^{(2)}(\mathbf{x})\big)$. We assume the boundary partitions---$\Gamma^{p}$ and $\Gamma^{v}$---to the same for both solution fields. 

    These two sets of solution fields satisfy the following governing equations: 
    \begin{subequations}
        \begin{alignat}{2}
            \label{Eqn:Hopf_Cole_Linear_Betti_BoLM_1}
            &\widetilde{\mu}_{0}(\mathbf{x}) \, 
            \mathbf{K}^{-1}(\mathbf{x}) \, \mathbf{v}^{(1)}(\mathbf{x})
            + \text{grad}\big[\mathcal{P}^{(1)}(\mathbf{x})\big] 
            = \mathbf{0} 
            &&\qquad \mathrm{in} \; \Omega \\
            \label{Eqn:Hopf_Cole_Linear_Betti_BoM_1}
            &\text{div}\big[\mathbf{v}^{(1)}(\mathbf{x})\big] = 0 
            &&\qquad \mathrm{in} \; \Omega \\
            \label{Eqn:Hopf_Cole_Linear_Betti_pBC_1}
            &\mathcal{P}^{(1)}(\mathbf{x}) = \mathcal{P}^{(1)}_{\mathrm{p}}(\mathbf{x}) 
            &&\qquad \mathrm{on} \; \Gamma^{p} \\
            \label{Eqn:Hopf_Cole_Linear_Betti_vBC_1}
            &\mathbf{v}^{(1)}(\mathbf{x}) \bullet \widehat{\mathbf{n}}(\mathbf{x}) = v_n^{(1)}(\mathbf{x})
            &&\qquad \mathrm{on} \; \Gamma^{v} 
        \end{alignat}
        \begin{alignat}{2}
            \label{Eqn:Hopf_Cole_Linear_Betti_BoLM_2}
            &\widetilde{\mu}_{0}(\mathbf{x}) \, 
            \mathbf{K}^{-1}(\mathbf{x}) \, \mathbf{v}^{(2)}(\mathbf{x})
            + \text{grad}\big[\mathcal{P}^{(2)}(\mathbf{x})\big] 
            = \mathbf{0} 
            &&\qquad \mathrm{in} \; \Omega \\
            \label{Eqn:Hopf_Cole_Linear_Betti_BoM_2}
            &\text{div}\big[\mathbf{v}^{(2)}(\mathbf{x})\big] = 0 
            &&\qquad \mathrm{in} \; \Omega \\
            \label{Eqn:Hopf_Cole_Linear_Betti_pBC_2}
            &\mathcal{P}^{(2)}(\mathbf{x}) = \mathcal{P}^{(2)}_{\mathrm{p}}(\mathbf{x}) 
            &&\qquad \mathrm{on} \; \Gamma^{p} \\
            \label{Eqn:Hopf_Cole_Linear_Betti_vBC_2}
            &\mathbf{v}^{(2)}(\mathbf{x}) \bullet \widehat{\mathbf{n}}(\mathbf{x}) = v_n^{(2)}(\mathbf{x})
            &&\qquad \mathrm{on} \; \Gamma^{v} 
        \end{alignat}
    \end{subequations}

    Taking a dot product between $\mathbf{v}^{(2)}(\mathbf{x})$ and Eq.~\eqref{Eqn:Hopf_Cole_Linear_Betti_BoLM_1}, we write the following:
    \begin{equation}
        \label{Eqn:Hopf_Cole_Linear_Betti_proof_Step_1a}
        \mathbf{v}^{(2)}(\mathbf{x}) \bullet \Big(\widetilde{\mu}_{0}(\mathbf{x}) \, \mathbf{K}^{-1}(\mathbf{x}) \, \mathbf{v}^{(1)} (\mathbf{x}) + \text{grad}\big[\mathcal{P}^{(1)}(\mathbf{x})\big]\Big) = 0 
    \end{equation}
    Likewise, taking a dot product between $\mathbf{v}^{(1)}(\mathbf{x})$ and Eq.~\eqref{Eqn:Hopf_Cole_Linear_Betti_BoLM_2}, we have: 
    \begin{equation}
        \label{Eqn:Hopf_Cole_Linear_Betti_proof_Step_1b}
        \mathbf{v}^{(1)}(\mathbf{x}) \bullet \Big(\widetilde{\mu}_{0}(\mathbf{x}) \, \mathbf{K}^{-1}(\mathbf{x}) \, \mathbf{v}^{(2)} (\mathbf{x}) + \text{grad}\big[\mathcal{P}^{(2)}(\mathbf{x})\big]\Big) = 0
    \end{equation}
    Subtracting the two preceding two equations and noting the symmetry of $\mathbf{K}(\mathbf{x})$ (i.e., the second-order permeability tensor), we arrive at the following: 
    \begin{equation}
        \label{Eqn:Hopf_Cole_Linear_Betti_proof_Step_2}
        \mathbf{v}^{(1)}(\mathbf{x}) \bullet \text{grad}\big[\mathcal{P}^{(2)}(\mathbf{x})\big] = \mathbf{v}^{(2)}(\mathbf{x}) \bullet \text{grad}\big[\mathcal{P}^{(1)}(\mathbf{x})\big]
    \end{equation}
    
    Integrating over the domain and applying Green’s identity, we rewrite the above equation in the following equivalent form:
    \begin{align}
        \label{Eqn:Hopf_Cole_Linear_Betti_proof_Step_3}
        &\int_{\partial \Omega} \mathbf{v}^{(1)}(\mathbf{x}) 
        \bullet \widehat{\mathbf{n}}(\mathbf{x}) \, 
        \mathcal{P}^{(2)}(\mathbf{x}) \, \mathrm{d} \Gamma 
       - \int_{\Omega} \mathrm{div}\big[\mathbf{v}^{(1)}(\mathbf{x})\big] \, \mathcal{P}^{(2)}(\mathbf{x}) 
        \, \mathrm{d} \Omega \nonumber \\ 
        &\qquad \qquad = \int_{\partial \Omega} \mathbf{v}^{(2)}(\mathbf{x}) 
        \bullet \widehat{\mathbf{n}}(\mathbf{x}) \, 
        \mathcal{P}^{(1)}(\mathbf{x}) \, \mathrm{d} \Gamma 
        - \int_{\Omega} \mathrm{div}\big[\mathbf{v}^{(2)}(\mathbf{x})\big] \, \mathcal{P}^{(1)}(\mathbf{x}) 
        \, \mathrm{d} \Omega 
    \end{align}
    Now, invoking the incompressibility constraints, given by Eqs.~\eqref{Eqn:Hopf_Cole_Linear_Betti_BoM_1} and \eqref{Eqn:Hopf_Cole_Linear_Betti_BoM_2}, we get:
    \begin{align}
        \int_{\partial \Omega} \mathbf{v}^{(1)}(\mathbf{x}) 
        \bullet \widehat{\mathbf{n}}(\mathbf{x}) \, 
        \mathcal{P}^{(2)}(\mathbf{x}) \, \mathrm{d} \Gamma 
        = \int_{\partial \Omega} \mathbf{v}^{(2)}(\mathbf{x}) 
        \bullet \widehat{\mathbf{n}}(\mathbf{x}) \, 
        \mathcal{P}^{(1)}(\mathbf{x}) \, \mathrm{d} \Gamma 
    \end{align}
    Splitting the integrals over the boundary partitions and enforcing the associated boundary conditions (i.e., Eqs.~\eqref{Eqn:Hopf_Cole_Linear_Betti_pBC_1}, \eqref{Eqn:Hopf_Cole_Linear_Betti_vBC_1}, \eqref{Eqn:Hopf_Cole_Linear_Betti_pBC_2}, and \eqref{Eqn:Hopf_Cole_Linear_Betti_vBC_2}), we obtain:
    \begin{align}
        \label{Eqn:Hopf_Cole_Linear_Betti_Theorem}
        &\int_{\Gamma^{v}} v_{n}^{(1)}(\mathbf{x}) 
        \, \mathcal{P}^{(2)}(\mathbf{x}) \, \mathrm{d} \Gamma 
        + \int_{\Gamma^{p}}
        \, \mathbf{v}^{(1)}(\mathbf{x}) 
        \bullet \widehat{\mathbf{n}}(\mathbf{x}) \, 
        \mathcal{P}^{(2)}_{\mathrm{p}}(\mathbf{x}) 
        \, \mathrm{d} \Gamma \nonumber \\ 
        &\qquad \qquad = \int_{\Gamma^{v}}
        v^{(2)}_{n}(\mathbf{x}) \, \mathcal{P}^{(1)}(\mathbf{x}) 
        \, \mathrm{d} \Gamma
        + \int_{\Gamma^{p}}
        \mathbf{v}^{(2)}(\mathbf{x}) 
        \bullet \widehat{\mathbf{n}}(\mathbf{x}) \, 
        \mathcal{P}^{(1)}_{\mathrm{p}}(\mathbf{x}) 
        \, \mathrm{d} \Gamma 
    \end{align}
    Rearranging the terms yields the desired reciprocal relation given by Eq.~\eqref{Eqn:Hopf_Cole_Betti_Theorem_Darcy}.
\end{proof}

\section{Kirchhoff transformation}
\label{App:Hopf_Cole_Kirchhoff}

The Kirchhoff transformation introduces a new variable based on solution-dependent material properties \citep{carslaw1984conduction}. Since in our case, only viscosity depends on the pressure, we define a new field variable as follows: 
\begin{align}
    \mathcal{P}_{\mathrm{K}}(\mathbf{x}) 
    := \int_{p_0}^{\widetilde{p}(\mathbf{x})} \frac{\mu(p_0)}{\mu(p^\prime)} \, \mathrm{d}p^\prime
    = \int_{p_0}^{\widetilde{p}(\mathbf{x})} 
    \frac{1}{g(p^\prime)} \, \mathrm{d}p^\prime
\end{align}
where the function $g(\cdot)$ is defined in Eq.~\eqref{Eqn:Hopf_Cole_g_of_ptilde}, and $\mu_0$ is the viscosity at pressure $p_0$. Executing the integral, we get the expression for the transformed variable under the Kirchhoff transformation:
\begin{align}
    \label{Eqn:Hopf_Cole_Kirchhoff_expression}
    \mathcal{P}_{\mathrm{K}}(\mathbf{x})
    = \int_{p_0}^{\widetilde{p}(\mathbf{x})} 
    \exp\left[-\beta \, \left(\frac{p^{'}}{p_0} - 1\right)\right] \, 
    \mathrm{d} p^{'}
    = \frac{p_0}{\beta}
    \left(1 - 
    \exp\left[-\beta \, \left(\frac{\widetilde{p}(\mathbf{x})}{p_0} 
    - 1\right)\right] \right) 
\end{align}
For completeness, we also record the inverse (i.e., going from $\mathcal{P}_{\mathrm{K}}(\mathbf{x})$ to $\widetilde{p}(\mathbf{x})$):
\begin{align}
    \widetilde{p}(\mathbf{x}) 
    = p_0 \left(1 - \frac{1}{\beta} \, \ln\left[1 - \frac{\beta}{p_0} \, \mathcal{P}_{\mathrm{K}}(\mathbf{x})\right] \right)
\end{align}
The expression derived from the Hopf-Cole transformation, used in the main part of this paper, is notably simpler than the one presented above (cf. Eqs.~\eqref{Eqn:Hopf_Cole_Functional_form_of_mathcal_P} and \eqref{Eqn:Hopf_Cole_Kirchhoff_expression}).

We now derive the transformed governing equations based on the Kirchhoff transformation. Using the Leibniz integral rule, which provides a formula for differentiation under an integral sign \citep{kaplan1973advanced}, we proceed as follows:  
\begin{align}
    \mathrm{grad}\big[\mathcal{P}_{\mathrm{K}}(\mathbf{x})\big] 
    &= \frac{1}{g\big(\widetilde{p}(\mathbf{x})\big)} \, 
    \mathrm{grad}\big[\widetilde{p}(\mathbf{x})\big] \\
    &= \frac{\widetilde{\mu}_0(\mathbf{x})}{\widetilde{\mu}_0(\mathbf{x}) \, g\big(\widetilde{p}(\mathbf{x})\big)} \, 
    \mathrm{grad}\big[\widetilde{p}(\mathbf{x})\big] \\
    &= \frac{\widetilde{\mu}_0(\mathbf{x})}{\widehat{\mu}\big(p(\mathbf{x})\big)} \, 
    \mathrm{grad}\big[\widetilde{p}(\mathbf{x})\big] 
\end{align}
where the function form of $\widehat{\mu}\big(p(\mathbf{x})\big)$ is provided in Eq.~\eqref{Eqn:Hopf_Cole_Barus_model}. 
Applying the balance of linear momentum given by Eq.~\eqref{Eqn:Hopf_Cole_BoLM_Conservative}, we write: 
\begin{align}
    \mathrm{grad}\big[\mathcal{P}_{\mathrm{K}}(\mathbf{x})\big]
    = \widetilde{\mu}_{0}(\mathbf{x}) \, \mathbf{K}^{-1}(\mathbf{x}) 
    \, \left(\frac{1}{\widehat{\mu}\big(p(\mathbf{x})\big)} \, \mathbf{K}(\mathbf{x}) 
    \, \mathrm{grad}\big[\widetilde{p}(\mathbf{x})\big] \right)
    = -\widetilde{\mu}_{0}(\mathbf{x}) \, \mathbf{K}^{-1}(\mathbf{x}) 
    \, \mathbf{v}(\mathbf{x})
\end{align}

Then, the governing equations in terms of $\mathcal{P}_{\mathrm{K}}(\mathbf{x})$ and $\mathbf{v}(\mathbf{x})$ take the following form:
\begin{subequations}
    \begin{alignat}{2}
        &\widetilde{\mu}_0(\mathbf{x}) \, \mathbf{K}^{-1}(\mathbf{x}) \, 
        \mathbf{v}(\mathbf{x}) 
        + \mathrm{grad}\big[\mathcal{P}_{\mathrm{K}}(\mathbf{x})\big] = \mathbf{0} 
        &&\quad \mathrm{in} \; \Omega \\ 
        &\mathrm{div}\big[\mathbf{v}(\mathbf{x})\big] = 0 
        &&\quad \mathrm{in} \; \Omega \\ 
        &\mathcal{P}_{\mathrm{K}}(\mathbf{x}) = \mathcal{P}_{\mathrm{K}\mathrm{p}}(\mathbf{x}) 
        &&\quad \mathrm{on} \; \Gamma^{p} \\ 
        & \mathbf{v}(\mathbf{x}) \bullet \widehat{\mathbf{n}}(\mathbf{x}) = v_n(\mathbf{x}) 
        &&\quad \mathrm{on} \; \Gamma^{v} 
    \end{alignat}
\end{subequations}
The above boundary value problem again resembles the classical Darcy equations. Note the definition of $\mathcal{P}_{\mathrm{K}\mathrm{p}}(\mathbf{x})$, which is different from that of $\mathcal{P}_{\mathrm{p}}(\mathbf{x})$ (given by Eq.~\eqref{Eqn:Hopf_Cole_mathcalP_p_definition}). Since the transformations are different (cf. Eqs.~\eqref{Eqn:Hopf_Cole_h_transformation} and \eqref{Eqn:Hopf_Cole_Kirchhoff_expression}), $\mathcal{P}_{\mathrm{p}}(\mathbf{x})$ (under the Hopf-Cole transformation) and $\mathcal{P}_{\mathrm{K}\mathrm{p}}(\mathbf{x})$ (under the Kirchhoff transformation) differ for the same prescribed pressure boundary condition in the original problem (i.e., $p_{\mathrm{p}}(\mathbf{x}))$. 

We now demonstrate that the Kirchhoff transformation can be derived as a special case of the Hopf-Cole transformation, a connection first noted by \citet{Vadasz2010} in the context of heat conduction. To illustrate this, consider the general expression under the Hopf-Cole transformation given by Eq.~\eqref{Eqn:Hopf_Cole_f_in_terms_of_A_and_B}, which contains two integration constants that can be freely chosen. By selecting the following specific values in Eq.~\eqref{Eqn:Hopf_Cole_f_in_terms_of_A_and_B}:
\begin{align}
    A= 1 \quad \mathrm{and} \quad B = -\frac{p_0}{\beta}
\end{align} 
we recover the expression for the Kirchhoff transformation (i.e., Eq.~\ref{Eqn:Hopf_Cole_Kirchhoff_expression}).
As mentioned earlier, we selected $A = 1$ and $B = 0$ in order to derive the transformation utilized in the main part of the paper; see Eq.~\eqref{Eqn:Hopf_Cole_selection_of_A_and_B}.

Notably, alternative choices for these constants lead to a hierarchy of possible transformations. This highlights the greater flexibility of the Hopf-Cole approach, which allows for a simpler and more versatile transformation than the Kirchhoff approach, simply by appropriately selecting the values of the constants.

    \section*{DATA AVAILABILITY}
    Data supporting the findings of this study are available from the corresponding author on request.

    \bibliographystyle{plainnat}
    \bibliography{Master_References}
\end{document}